\documentclass[12pt]{amsart}

\usepackage{enumitem}
\usepackage{psfrag}
\usepackage{graphicx}
\usepackage{pinlabel}
\usepackage{graphicx}
\usepackage{amsmath}
\usepackage{amssymb}
\usepackage{amscd}
\usepackage{autobreak}
\usepackage{picinpar}
\usepackage{times}
\usepackage{pb-diagram}
\usepackage{graphicx}
\usepackage{wrapfig}
\usepackage{xspace}
\usepackage{hyperref}
\usepackage{url}
\usepackage{caption}
\usepackage{subcaption}
\usepackage{fancyhdr}
\usepackage{overpic}
\usepackage{color}
\usepackage[square,comma,sort&compress,numbers]{natbib}
\usepackage{latexsym}
\usepackage{mathrsfs}
\usepackage{amsfonts}
\usepackage{hyperref}
\usepackage{amsthm}
\usepackage{appendix}
\usepackage{pdfsync}
\usepackage{newtxmath}

\theoremstyle{definition}
\newtheorem{theorem}{Theorem}[section]
\newtheorem{lemma}[theorem]{Lemma}
\newtheorem{proposition}[theorem]{Proposition}
\newtheorem{corollary}[theorem]{Corollary}
\newtheorem{definition}[theorem]{Definition}
\newtheorem{conjecture}[theorem]{Conjecture}
\newtheorem{remark}[theorem]{Remark}
\newtheorem*{theorem*}{Theorem}

\DeclareMathOperator{\adj}{adj}
\DeclareMathOperator{\diag}{diag}

\graphicspath{{figures/}}   %% 图片路径. 本文的图片都放在这个文件夹里了.

\def\qed{\hfill{Q.E.D.}\smallskip}

\begin{document}

\title{\bf Thurston's sphere packing on 3-dimensional manifolds, II}
\author{Xiaokai He, Xu Xu, Chao Zheng}

\date{\today}

\address{School of Mathematics and Statistics, Hunan First Normal University, Changsha 410205, P.R. China} 
\email{hexiaokai77@163.com}

\address{School of Mathematics and Statistics, Wuhan University, Wuhan, 430072, P.R. China} \email{xuxu2@whu.edu.cn}

\address{International Center for Mathematical Research (BICMR), Beijing (Peking) University, Beijing, 100871, P.R. China}
\email{czheng@whu.edu.cn}

\thanks{MSC (2020): 52C26, 52C26}

\keywords{Thurston's hyperbolic sphere packings; Rigidity}

\begin{abstract}
In this paper, we study the rigidity of Thurston's hyperbolic sphere packings on 3-dimensional manifolds. 
We prove that Thurston's hyperbolic sphere packing is locally determined by combinatorial scalar curvature. 
We further prove the infinitesimal rigidity that Thurston's hyperbolic sphere packing can not be deformed while keeping the combinatorial Ricci curvature fixed. The main tools include a characterization of the admissible space of Thurston's hyperbolic sphere packings, 
a variational formula for the dihedral angles of a tetrahedron and an identity for the variations of dihedral angles in a tetrahedron generated by Thurston's hyperbolic sphere packings.
\end{abstract}

\maketitle

\section{Introduction}

This work continues the research initiated by He and Xu in \cite{He-Xu}, where the authors investigated the rigidity of Thurston’s Euclidean sphere packings on 3-dimensional manifolds. In the present paper, we focus on Thurston’s hyperbolic sphere packings on 3-manifolds. These packings constitute three-dimensional analogues of Thurston’s hyperbolic circle packings on surfaces, which have been extensively studied in \cite{CL,H2,St,T1,Z} and other related works. Meanwhile, they serve as generalizations of tangential sphere packings on closed 3-manifolds, a topic explored in \cite{CR,G thesis,G1,G2,Glickenstein JDG,Thomas} and prior literature.

Suppose $M$ is a 3-dimensional connected closed manifold with a triangulation $\mathcal{T}=\{V,E,F,T\}$,
where $V, E, F, T$ represent the sets of vertices, edges, faces and tetrahedra, respectively. 
We use $(M,\mathcal{T})$ to denote a triangulated 3-manifold, and
use $i$, $\{ij\}$, $\{ijk\}$, $\{ijkh\}$ to denote a vertex, edge, face and tetrahedron in $\mathcal{T}$ respectively.
We sometimes use $\sigma$ to denote a tetrahedron in $\mathcal{T}$.

\begin{definition}\label{Def: SP metric}
Suppose $(M,\mathcal{T})$ is a triangulated connected closed 3-dimensional manifold.
Let $\Phi: E\rightarrow [0,\frac{\pi}{2}]$ be a weight defined on the edges of $(M,\mathcal{T})$.
Thurston's hyperbolic sphere packing metric on $(M,\mathcal{T},\Phi)$ is defined as a map $r: V\rightarrow (0,+\infty)$ such that
\begin{description}
\item[(a)] 
The edge length $l_{ij}$ of the edge $\{ij\}\in E$ between two vertices $i$ and $j$ is given by
\begin{equation}\label{Eq: Thurston hyperbolic length}
\cosh l_{ij}=\cosh r_{i}\cosh r_j+\sinh r_i\sinh r_j\cos \Phi_{ij};
\end{equation}
\item[(b)] 
The edge lengths $l_{ij}, l_{ik}, l_{ih}, l_{jk}, l_{jh}, l_{kh}$ determine a non-degenerate hyperbolic tetrahedron for each tetrahedron $\{ijkh\}\in T$.
\end{description}
\end{definition}

The function $l: E\rightarrow (0, +\infty)$ determined by (\ref{Eq: Thurston hyperbolic length}) is a discrete hyperbolic metric on $(M,\mathcal{T})$.
The condition (b) in Definition \ref{Def: SP metric} is called the non-degenerate condition.
In the following, we sometimes refer to a Thurston's hyperbolic sphere packing metric in Definition \ref{Def: SP metric} as a non-degenerate Thurston's hyperbolic sphere packing metric.
If $\Phi\equiv0$, then Thurston's hyperbolic sphere packing metric is exactly Cooper-Rivin's tangential sphere packing metric in \cite{CR}.
Given a Thurston's hyperbolic sphere packing metric $r$ on $(M,\mathcal{T},\Phi)$,
we attach to each vertex \(i\in V\) a sphere \(B_i\) of radius \(r_i\). 
For vertices \(i,j\in V\), the intersection angle of \(B_i\) and \(B_j\) is \(\Phi_{ij}\). 
Then the length \(l_{ij}\) of the edge \(\{ij\}\) is a function of \(r_i\), \(r_j\) and the fixed angle \(\Phi_{ij}\), given by the hyperbolic cosine law as in \eqref{Eq: Thurston hyperbolic length}. 
See Figure \ref{triangle}.
%a sphere $B_i$ of radius $r_i$ is attached to each vertex $i\in V$, and the intersection angle of $B_i$ and $B_j$ attached to the vertices $i,j\in V$ is $\Phi_{ij}$.
%Then the length $l_{ij}$ of the edge $\{ij\}$ is a function of $r_i,\ r_j$ and the fixed angle $\Phi_{ij}$ defined by (\ref{Eq: Thurston hyperbolic length}) by the cosine law for hyperbolic triangles. 
\begin{figure}[!ht]
\centering
\begin{overpic}[scale=1.5]{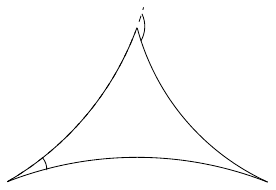}
\put (-0.5,-0.5) {$i$}
\put(97,-0.5) {$j$}
\put (64,34) {$r_j$} 
\put (30,34) {$r_i$} 
\put (18,10) {$\theta_{ij}$}
\put (54,56) {$\Phi_{ij}$}
\end{overpic}
\caption{A hyperbolic triangle.}
  \label{triangle}
\end{figure}

For each tetrahedron $\sigma\in T$, Thurston's hyperbolic sphere packing metric determines a non-degenerate hyperbolic tetrahedron by Definition \ref{Def: SP metric}.
Gluing these hyperbolic tetrahedra along the faces by isometries gives rise to a piecewise hyperbolic 3-dimensional manifold,
which has singularities along the edges and vertices.
We use the following combinatorial Ricci curvature to describe the singularities along the edges.

\begin{definition}\label{Def: CRC}
Suppose $(M,\mathcal{T})$ is a triangulated connected closed 3-dimensional manifold.
Let $l: E\rightarrow(0,+\infty)$ be a discrete hyperbolic metric on $(M,\mathcal{T})$.
The combinatorial Ricci curvature of $l$ along the edge $\{ij\}\in E$ is defined as
\begin{equation}\label{Eq: CRC}
K_{ij}=2\pi-\sum_{\{ijkh\}\in T}\beta_{ij,kh},
\end{equation}
where $\beta_{ij,kh}$ is the dihedral angle along the edge $\{ij\}$ in a tetrahedron $\{ijkh\}$, and the summation is taken over the tetrahedra with $\{ij\}$ as a common edge.
\end{definition}

Similar to the combinatorial scalar curvature for Thurston's Euclidean sphere packings on 3-dimensional closed manifolds \cite{Glickenstein JDG, He-Xu}, 
we use the following combinatorial scalar curvature for Thurston's hyperbolic sphere packings to describe the singularities at the vertices.

\begin{definition}\label{Def: CSC}
Suppose $(M,\mathcal{T})$ is a triangulated connected closed 3-dimensional manifold.
Let $\Phi: E\rightarrow [0,\frac{\pi}{2}]$ be a weight defined on the edges of $(M,\mathcal{T})$.
The combinatorial scalar curvature at a vertex $i\in V$ for Thurston's hyperbolic sphere packing metric $r$ is defined as
\begin{equation*}%\label{Eq: CSC}
K_i=\sum_{j;j\sim i}K_{ij}\cos \theta_{ij},
\end{equation*}
where $\theta_{ij}$ is the inner angle opposite to the edge with length $r_j$ in the hyperbolic triangle formed by three edges with lengths $r_i,\ r_j,\ l_{ij}$, as shown in Figure \ref{triangle}.
\end{definition}

If $\Phi_{ij}=0$ for any edge $\{ij\}\in E$, then $\cos \theta_{ij}=1$ and hence
\begin{equation*}
K_i=\sum_{j;j\sim i}\bigg(2\pi-\sum_{\{ijkh\}\in T}\beta_{ij,kh}\bigg)
=4\pi-\sum_{\{ijkh\}\in T} \alpha_{i,jkl},
\end{equation*}
where $\alpha_{i,jkl}$ is the solid angle at the vertex $i$ of the tetrahedron $\{ijkh\}\in T$.
This is the combinatorial scalar curvature introduced by Cooper-Rivin \cite{CR}.

We have the following local rigidity for Thurston's hyperbolic sphere packing metrics with respect to the combinatorial scalar curvature in Definition \ref{Def: CSC}.

\begin{theorem}\label{Thm: local rigidity}
Suppose $(M,\mathcal{T},\Phi)$ is a triangulated connected closed 3-dimensional manifold with a weight $\Phi: E\rightarrow [0,\frac{\pi}{2}]$ satisfying
\begin{equation}\label{Eq: SC1}
\Phi_{ij}+\Phi_{jk}+\Phi_{ik}>\pi,\quad \forall\{ijk\}\in F,
\end{equation}
or
\begin{equation}\label{Eq: SC2}
\Phi\equiv C\in [0,\frac{\pi}{2}].
\end{equation}
If $\overline{r}$ is a non-degenerate Thurston's hyperbolic sphere packing metric on $(M,\mathcal{T},\Phi)$ satisfying $K_{ij}(\overline{r})\sin^2\Phi_{ij}\geq0$ for any edge $\{ij\}\in E$,
then there exists a neighborhood $U$ of $\overline{r}$ such that Thurston's hyperbolic sphere packing metric in $U$ is determined by its combinatorial scalar curvature.
\end{theorem}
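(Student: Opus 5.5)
Local rigidity will follow from the inverse function theorem applied to the curvature map $r\mapsto K=(K_1,\dots,K_N)$, where $N=|V|$. We will show that the Jacobian $\Lambda=\partial(K_1,\dots,K_N)/\partial(r_1,\dots,r_N)$ is nonsingular at $\overline r$. It is convenient to change variables to $u_i=\ln\tanh(r_i/2)$, or some similar variable adapted to the hyperbolic cosine law. In such a variable we expect the Jacobian of $K$ to be symmetric, since $K$ should be the gradient of a locally defined Schläfli-type functional.

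The first step is a computation in a single triangle. From \eqref{Eq: Thurston hyperbolic length} and the triangle with sides $r_i,r_j,l_{ij}$, we will compute $\partial l_{ij}/\partial r_i$ and recognize it as $\cos\theta_{ij}$. In the hyperbolic triangle, the derivative of the side $l_{ij}$ with respect to the adjacent side $r_i$, with the included angle $\pi-\Phi_{ij}$ fixed, is exactly $\cos\theta_{ij}$. We then apply the Schläfli formula to each tetrahedron $\sigma=\{ijkh\}$:
\[
d\mathrm{Vol}(\sigma)=-\tfrac12\sum l_{ab}\,d\beta_{ab}.
\]
Passing to the co-volume $F_\sigma=\mathrm{Vol}(\sigma)+\tfrac12\sum l_{ab}\beta_{ab}$ gives $dF_\sigma=\tfrac12\sum \beta_{ab}\,dl_{ab}$. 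Summing over tetrahedra and adding $\pi\sum_{ij} l_{ij}$, the function
\[
\mathcal F(r)=2\pi\sum_{ij} l_{ij}-\sum_\sigma 2F_\sigma
\]
satisfies $\partial\mathcal F/\partial r_i=\sum_j K_{ij}\cos\theta_{ij}=K_i$. Hence $\Lambda=\mathrm{Hess}_r\mathcal F$ is symmetric, and it suffices to prove that $\Lambda$ is definite at $\overline r$.

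Next we decompose the Hessian:
\[
\Lambda=\sum_{j}K_{ij}\frac{\partial\cos\theta_{ij}}{\partial r}-\sum_\sigma L_\sigma^{T}\frac{\partial\beta}{\partial l}L_\sigma .
\]
Here $L_\sigma=\partial l/\partial r$ in the tetrahedron $\sigma$, with entries $\cos\theta$. The first part is where the hypothesis $K_{ij}\sin^2\Phi_{ij}\ge0$ enters. Differentiating $\cos\theta_{ij}$ should produce a factor $\sin^2\Phi_{ij}$ times a quadratic form of fixed (positive) sign in $(dr_i,dr_j)$. When $\Phi_{ij}=0$ this term vanishes, because $\cos\theta_{ij}\equiv1$. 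So the first part is positive semidefinite under the hypothesis.

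The key step, and the main obstacle, is to show that the tetrahedral part $-\sum_\sigma L_\sigma^T(\partial\beta/\partial l)L_\sigma$ is positive definite, meaning that each $4\times4$ block $\partial\beta_\sigma/\partial r_\sigma$ is negative definite for a non-degenerate tetrahedron. This is where the paper's promised tools are used:
\begin{itemize}
\item the characterization of the admissible space;
\item the variational formula for dihedral angles;
\item the identity for variations of dihedral angles in a Thurston-packing tetrahedron.
\end{itemize}
The conditions \eqref{Eq: SC1} or \eqref{Eq: SC2} are needed to guarantee that the admissible space of a single tetrahedron is simply connected, or at least that the non-degenerate region is connected, and that its boundary behaviour is controlled.

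We would first try to prove that $\det(\partial\beta/\partial r)\ne0$ on the whole admissible space of one tetrahedron. The identity for the variations of dihedral angles should play the role that $\sum_j \partial\alpha_i/\partial r_j\cdot r_j=0$ plays in the Euclidean case. Here, however, it must be replaced by a hyperbolic substitute giving strict inequality, which yields nonsingularity and hence no zero eigenvalue. We then verify definiteness at one convenient point, for instance equal radii with regular symmetry, where the matrix is computable explicitly. Connectedness of the admissible space then propagates negative definiteness everywhere.

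Adding the two parts gives $\Lambda$ positive definite at $\overline r$. The inverse function theorem then gives a neighborhood $U$ on which $r\mapsto K$ is injective, so the metric in $U$ is determined by its combinatorial scalar curvature.
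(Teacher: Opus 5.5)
Your global architecture is the paper's. Your $2\pi\sum l_{ij}-2\sum_\sigma F_\sigma$ is exactly $\mathcal F=\sum K_{ij}l_{ij}-2\sum_\sigma\mathrm{vol}(\sigma)$, and $\partial\mathcal F/\partial r_i=K_i$. The Hessian splits as $-\sum_\sigma\Lambda_\sigma+\sum K_{ij}\,\mathrm{Hess}_r(l_{ij})$, where $\mathrm{Hess}_r(l_{ij})$ is $\sin^2\Phi_{ij}/\sinh^3 l_{ij}$ times a positive definite matrix. Positive definiteness at $\overline r$ then gives local injectivity, via strict convexity on a convex neighbourhood in the paper or via the inverse function theorem in your version; both work.

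There are two small slips. The Jacobian is symmetric in $r$ itself, not after passing to $u_i=\ln\tanh(r_i/2)$. And the $4\times4$ block that must be negative definite is $\Lambda_\sigma=DJD^T$, not $\partial\beta/\partial r$, which is $6\times4$.

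The genuine gap is the key step, which carries essentially all the content. You give no mechanism that makes $\Lambda_\sigma$ nonsingular, let alone definite, on the admissible space. The phrase ``a hyperbolic substitute giving strict inequality'' is a placeholder.

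What is actually needed has two parts. First, an explicit variational formula for the dihedral angles. For $C=-S\Lambda_\sigma S$ it gives $C_{ij}=-\frac{\sqrt q}{s_is_jE_{ij}}\bigl(h_kb^{(k)}_l+h_lb^{(l)}_k\bigr)$ with $h=M^{-1}\kappa$, together with the exact identity $C_{ii}=-\sum_{j\neq i}\cosh l_{ij}\,C_{ij}$. These turn the quadratic form into $z^TCz=-\sum_{i<j}C_{ij}\bigl[(z_i-z_j)^2+(\cosh l_{ij}-1)(z_i^2+z_j^2)\bigr]$. Second, sign control of the $C_{ij}$.

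Under \eqref{Eq: SC1} one has $A_i\le0$, so $\adj(P)$ is entrywise nonpositive and $h=\adj(P)\kappa/\det M\ge0$. Combined with $b^{(k)}_l\ge0$ and a support argument for strictness, this gives $C_{ij}\le0$, with at least one strictly negative edge at each vertex. That yields definiteness directly, with no continuity argument at all.

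Under \eqref{Eq: SC2} with $\eta>1/2$, however, $A_i=(1+\eta)^2(2\eta-1)>0$. Then some $h_i$ becomes negative near the degenerate boundary, the $C_{ij}$ lose their sign, and the identity gives nothing. The paper instead proves nonsingularity through the factorization $\det\mathcal G=-(1+\eta)^2\prod_{i<j}E_{ij}\,\mathcal P$. It then checks by computer algebra that $(1+z)^{10}\mathcal P(y,z/(1+z))$ has only positive coefficients, treating $\eta=1$ separately.

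Only after that does your continuity-plus-connectedness scheme apply. Your regular-tetrahedron base point would be fine: there all $h_i$ equal $\kappa_1/(4\kappa_1^2+3\eta-1)>0$.

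You also misplace the role of the hypotheses. Connectedness, indeed simple connectedness, of the single-tetrahedron admissible space holds for every weight in $[0,1]^6\setminus\widetilde S$. Conditions \eqref{Eq: SC1} and \eqref{Eq: SC2} are used precisely in proving definiteness of $\Lambda_\sigma$.
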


As a direct consequence of Theorem \ref{Thm: local rigidity}, we have the following corollary.
\begin{corollary}
Suppose $(M,\mathcal{T},\Phi)$ is a triangulated connected closed 3-dimensional manifold with a weight $\Phi: E\rightarrow [0,\frac{\pi}{2}]$ satisfying (\ref{Eq: SC1}) or (\ref{Eq: SC2}).
If $\overline{r}$ is a non-degenerate Thurston's hyperbolic sphere packing metric on $(M,\mathcal{T},\Phi)$ with $K_{ij}(\overline{r})\geq0$ for any edge $\{ij\}\in E$,
then there exists a neighborhood $U$ of $\overline{r}$ such that Thurston's hyperbolic sphere packing metric in $U$ is determined by its combinatorial scalar curvature.
\end{corollary}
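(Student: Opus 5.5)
The corollary is immediate from Theorem \ref{Thm: local rigidity}, so the proof is a reduction rather than a new argument. I will check that, under the stated hypotheses, every hypothesis of Theorem \ref{Thm: local rigidity} holds, and then quote its conclusion.

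First, the weight condition is literally the same. The corollary assumes that $\Phi: E\rightarrow[0,\frac{\pi}{2}]$ satisfies (\ref{Eq: SC1}) or (\ref{Eq: SC2}), which is exactly the alternative required in Theorem \ref{Thm: local rigidity}. Likewise, $\overline{r}$ is assumed to be a non-degenerate Thurston's hyperbolic sphere packing metric on $(M,\mathcal{T},\Phi)$, as the theorem requires.

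Second, I will deduce the curvature sign condition. Fix an edge $\{ij\}\in E$. Since $\sin^2\Phi_{ij}\geq 0$ for every value of $\Phi_{ij}$, the assumption $K_{ij}(\overline{r})\geq0$ gives
\begin{equation*}
K_{ij}(\overline{r})\sin^2\Phi_{ij}\geq 0 .
\end{equation*}
This holds edgewise, so the condition holds for all edges simultaneously. In fact the product condition of Theorem \ref{Thm: local rigidity} is weaker than $K_{ij}(\overline{r})\geq0$: it places no restriction on edges with $\Phi_{ij}=0$.

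With all hypotheses verified, Theorem \ref{Thm: local rigidity} provides a neighborhood $U$ of $\overline{r}$ on which Thurston's hyperbolic sphere packing metric is determined by its combinatorial scalar curvature $K_i$. That is exactly the assertion of the corollary. There is no real obstacle here; the only point that needs attention is noting that the sign condition is required at $\overline{r}$ alone, which is how both statements are phrased.
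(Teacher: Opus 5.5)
Your proposal is correct and matches the paper, which presents this corollary as a direct consequence of Theorem \ref{Thm: local rigidity}. The only step needed is your observation that $K_{ij}(\overline{r})\geq 0$ together with $\sin^2\Phi_{ij}\geq 0$ gives $K_{ij}(\overline{r})\sin^2\Phi_{ij}\geq 0$ on every edge.
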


\begin{remark}
In the special case $\Phi\equiv 0$, the condition $K_{ij}(\overline{r})\sin^2\Phi_{ij}\geq0$ has no restrictions on the combinatorial Ricci curvature $K_{ij}$. In this case, Theorem \ref{Thm: local rigidity} reduces to the local rigidity of tangential hyperbolic sphere packings on 3-dimensional manifolds, a result claimed by Cooper-Rivin in \cite{CR}. In this paper, we give a rigorous proof of this fact as a by product of Theorem \ref{Thm: local rigidity}. The global rigidity of the tangential hyperbolic sphere packings has been proved in \cite{X1}.
\end{remark}

We further have the following infinitesimal rigidity for Thurston's hyperbolic sphere packings with respect to the combinatorial Ricci curvature in Definition \ref{Def: CRC}.

\begin{theorem}\label{Thm: infinitesimal rigidity}
Suppose $(M,\mathcal{T},\Phi)$ is a triangulated connected closed 3-dimensional manifold with a weight $\Phi: E\rightarrow [0,\frac{\pi}{2}]$ satisfying (\ref{Eq: SC1}) or (\ref{Eq: SC2}).
Thurston's hyperbolic sphere packing metric on $(M,\mathcal{T},\Phi)$ can not be deformed while keeping the combinatorial Ricci curvatures along the edges fixed.
\end{theorem}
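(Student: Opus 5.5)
The plan is to prove that the Jacobian of the curvature map $r\mapsto (K_{ij})_{\{ij\}\in E}$ has trivial kernel at every non-degenerate Thurston hyperbolic sphere packing metric $r$. Suppose $\dot r=(\dot r_1,\dots,\dot r_{|V|})$ is an infinitesimal deformation with $\dot K_{ij}=0$ for every edge $\{ij\}$; we must show $\dot r=0$. Since $K_{ij}=2\pi-\sum\beta_{ij,kh}$, this condition says $\sum_{\{ijkh\}\in T}\dot\beta_{ij,kh}=0$ for all edges.

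The idea is to combine these edge equations with vertex weights built from the scalar curvature. For each vertex $i$ consider
\[
\sum_{j\sim i}\cos\theta_{ij}\,\dot K_{ij}=0 .
\]
The left side is the part of $\dot K_i$ that does not involve $\dot\theta_{ij}$. Summing over vertices with weight $\dot r_i$ (or a suitable weight such as $\dot r_i$ times a positive factor) gives
\[
0=\sum_i \dot r_i\sum_{j\sim i}\cos\theta_{ij}\,\dot K_{ij}
=-\sum_{\sigma\in T}\ \sum_{\{ij\}\subset\sigma}(\dot r_i\cos\theta_{ij}+\dot r_j\cos\theta_{ji})\,\dot\beta_{ij}^{\sigma}.
\]
The goal is then to show that each tetrahedron contributes a quadratic form
\[
Q_\sigma(\dot r)=\sum_{\{ij\}\subset\sigma}(\cos\theta_{ij}\dot r_i+\cos\theta_{ji}\dot r_j)\,\dot\beta_{ij}
\]
that is negative semi-definite in $\dot r|_\sigma$, with kernel only $0$. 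Then $\sum_\sigma Q_\sigma=0$ forces $\dot r|_\sigma=0$ on every tetrahedron, hence $\dot r=0$.

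The key observation behind this is the identity promised in the abstract for variations of dihedral angles generated by Thurston packings. Using the Schläfli formula
\[
dV=-\tfrac12\sum_{\{ij\}}l_{ij}\,d\beta_{ij}
\]
together with the derivative of the cosine law, $\partial l_{ij}/\partial r_i=\cos\theta_{ij}$, one finds that $\sum_{\{ij\}}(\cos\theta_{ij}dr_i+\cos\theta_{ji}dr_j)\,d\beta_{ij}=\sum dl_{ij}\,d\beta_{ij}$. So $Q_\sigma$ equals $\dot l^{T}(\partial\beta/\partial l)\,\dot l$ with $\dot l=(\partial l/\partial r)\dot r$. This is the Hessian of the Legendre-transformed co-volume $2V+\sum l\beta$ restricted to the image of the $4$-dimensional radius directions.

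The proof then reduces to a single-tetrahedron statement: the $4\times 4$ matrix $J^{T}(\partial\beta/\partial l)J$, with $J=\partial l/\partial r$, is definite on the admissible space. I would establish this in three steps.

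(1) Characterize the admissible space of a single tetrahedron under conditions (\ref{Eq: SC1}) or (\ref{Eq: SC2}), and show it is connected (in fact simply connected). Under (\ref{Eq: SC1}), every face triangle is automatically non-degenerate.

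(2) Show the determinant of the $4\times4$ form never vanishes on the admissible space. The plan is to express $\partial\beta_{ij}/\partial r_k$ through the variational formula for dihedral angles, in terms of Gram-matrix cofactors, and check non-vanishing. Alternatively, prove directly that $\sum_{ij}\dot\beta_{ij}\dot l_{ij}=0$ with $\dot l=J\dot r$ forces $\dot r=0$: realize $\dot r$ as a geometric deformation of the four spheres, and use that the only deformations of a hyperbolic tetrahedron keeping all $\dot\beta$ in the kernel are trivial.

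(3) Compute the sign at one convenient point, for instance a regular tetrahedron with equal radii, where symmetry diagonalizes the form into the all-ones direction and its complement. By continuity and connectedness, definiteness then persists everywhere.

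Step (2), the non-degeneracy on the whole admissible region, is the main obstacle. The connectedness in step (1) is also delicate when $\Phi$ is constant but the angle sum condition fails, which is exactly why hypothesis (\ref{Eq: SC2}) is needed there.
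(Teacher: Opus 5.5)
\textbf{Where the proposal stands.} Your reduction is correct, and it is the infinitesimal form of the paper's argument. At $\overline r$ the Hessian of $\widetilde{\mathcal F}=\mathcal F-\sum K_{ij}(\overline r)l_{ij}$ is $-\sum_\sigma\Lambda_\sigma$, where $\Lambda_\sigma=DJD^T$, $D=\partial l/\partial r$ has entries $\cos\theta_{ij}$, and $J=\partial\beta/\partial l$. Your $\sum_\sigma Q_\sigma$ is exactly $\dot r^T(\sum_\sigma\Lambda_\sigma)\dot r$. So everything hinges on definiteness of $\Lambda_\sigma$ for a single tetrahedron, and that is where there is a genuine gap: step (2) is only announced, and it does not follow from anything you invoke.

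Indeed, $\Lambda_\sigma\dot r=0$ means $D\dot\beta=0$ for $\dot\beta=JD^T\dot r$. This only forces $\dot\beta$ into the $2$-dimensional kernel of the $4\times 6$ matrix $D$. Rigidity of a hyperbolic tetrahedron under its dihedral angles (invertibility of $J$) is therefore irrelevant. Since $J$ is indefinite in general, the vanishing of $\dot l^TJ\dot l$ tells you nothing without a sign analysis. Your ``alternative'' in (2) is thus not an argument. Also, the identity you extract from Schl\"afli is just the chain rule $\dot l_{ij}=\cos\theta_{ij}\dot r_i+\cos\theta_{ji}\dot r_j$, not the identity the paper needs.

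\textbf{The missing idea.} The paper derives the explicit formula $\delta\beta_{ij}=-\frac{\sqrt q}{\sqrt{E_{ij}}}\bigl[b^{(k)}_l(Hv)_k+b^{(l)}_k(Hv)_l\bigr]$, with $H=M^{-1}$, $M=P+\kappa\kappa^T$, $\delta r_i=s_i^2v_i$. From it comes the hyperbolic Glickenstein-type identity of Theorem~\ref{thm C identity}: for $C=-S\Lambda_\sigma S$, one has $C_{ii}=-\sum_{j\ne i}\cosh l_{ij}\,C_{ij}$ and $C_{ij}=-\frac{\sqrt q}{s_is_jE_{ij}}\bigl(h_kb^{(k)}_l+h_lb^{(l)}_k\bigr)$.

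Under (\ref{Eq: SC1}) one has $A_i\le 0$, hence $h=\operatorname{adj}(P)\kappa/\det M\ge 0$, while $b^{(k)}_l\ge0$ always. Moreover $C_{ij}<0$ for at least one edge at each vertex (Lemma~\ref{Lem: 1}). Then $z^TCz=-\sum_{i<j}C_{ij}\bigl[(z_i-z_j)^2+(\cosh l_{ij}-1)(z_i^2+z_j^2)\bigr]>0$ pointwise, with no continuity argument at all.

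\textbf{How your continuity scheme compares.} Your scheme (connectedness, non-singularity, one base point) is what the paper uses, but only under (\ref{Eq: SC2}). There, non-singularity comes from $\det B=-q^{-1}\det(U^TT)$, reduced to an explicit zero-diagonal $4\times4$ polynomial matrix whose determinant factors as $-(1+\eta)^2\prod_{i<j}E_{ij}\,\mathcal P$. The positivity $\mathcal P>0$ is checked by exact computer algebra (Lemma~\ref{Lem: 2}). This is feasible only because $\eta$ is a single parameter, and the base point $\eta=0$ is covered by the (\ref{Eq: SC1}) argument. For the six-parameter family of weights allowed by (\ref{Eq: SC1}), your plan gives no way to carry out (2).

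\textbf{Misplaced hypotheses.} Connectedness of $\Omega_{1234}(\eta)$ holds for every $\eta\in[0,1]^6\setminus\widetilde S$ (Theorem~\ref{Thm: simply connect}). So (\ref{Eq: SC2}) is needed for the determinant computation, not for connectedness. What (\ref{Eq: SC1}) provides is not non-degeneracy of faces, which holds for all $\Phi$ with values in $[0,\frac{\pi}{2}]$. It provides $A_i\le 0$, which gives both $\Omega_{1234}(\eta)=\mathbb{R}^4_{>0}$ and the sign $h\ge 0$.
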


\begin{remark}
As mentioned in \cite{He-Xu}, there is a subtle difference between the notion of the local rigidity in Theorem \ref{Thm: local rigidity} and the infinitesimal rigidity in Theorem \ref{Thm: infinitesimal rigidity}.
In fact, the local rigidity implies the infinitesimal rigidity, but the reverse is not true due to dimensional differences.
\end{remark}

The basic idea underlying the proofs of Theorem \ref{Thm: local rigidity} and Theorem \ref{Thm: infinitesimal rigidity} is to use a variational principle with the discrete Hilbert-Einstein functional as the action functional. 
There are two main difficulties in the proof. 
The first difficulty arises from the connectedness of the admissible space of Thurston's hyperbolic sphere packing metrics for a tetrahedron. We resolve this difficulty in Theorem \ref{Thm: simply connect}. 
The main idea of the proof comes from a new proof \cite{X3} of Bowers-Stephenson's conjecture on the rigidity of inversive distance circle packings on surfaces \cite{X2}, 
which simplifies the proofs of Bowers-Stephenson's conjecture in \cite{Guo, L2, X2}. 
The second difficulty concerns the negative definiteness of the matrix $\Lambda_\sigma$ in (\ref{eq:matrices}).
To overcome it, we first derive an explicit variational formula for the dihedral angles of a non-degenerate tetrahedron arising from Thurston's hyperbolic sphere packings, and then establish a useful identity for the variations of these dihedral angles.

Note that the rigidity of Thurston's sphere packings on closed 3-manifolds are not fully understood in Theorem \ref{Thm: local rigidity} and Theorem \ref{Thm: infinitesimal rigidity}. 
For example,  the tangential sphere packings on closed 3-manifolds have been proved to be globally rigid \cite{X1}.
It is natural to ask whether the global rigidity holds for all Thurston's hyperbolic sphere packing metrics on closed 3-manifolds. 
Motivated by Theorem \ref{Thm: local rigidity}, Theorem \ref{Thm: infinitesimal rigidity} and the global rigidity of the tangential sphere packings in \cite{X1}, 
we have the following conjecture on the rigidity of Thurston's hyperbolic sphere packings on closed 3-manifolds.

\begin{conjecture}
Suppose $(M,\mathcal{T},\Phi)$ is a triangulated connected closed 3-dimensional manifold with a weight $\Phi: E\rightarrow [0,\frac{\pi}{2}]$.
In the space of Thurston's hyperbolic sphere packing metrics on $(M,\mathcal{T},\Phi)$ with $K_{ij}(\overline{r})\sin^2\Phi_{ij}\geq0$ for any edge $\{ij\}\in E$, Thurston's hyperbolic sphere packing metric is determined by the combinatorial scalar curvature.
\end{conjecture}

The paper is organized as follows.
In Section \ref{section 2}, we study the admissible space of Thurston's hyperbolic sphere packing metrics for a tetrahedron. As a result, we prove Theorem \ref{Thm: simply connect}.
In Section \ref{section 3}, we derive the variational formula of the dihedral angles and prove a useful identity of the variational formulas. As a result, we prove the key negativeness of the matrix $\Lambda_\sigma$ appearing in the variation of the discrete Hilbert-Einstein functional, under a generalization of the conditions (\ref{Eq: SC1}) and (\ref{Eq: SC2}).
In Section \ref{section 4}, we prove generalizations of Theorem \ref{Thm: local rigidity} and Theorem \ref{Thm: infinitesimal rigidity}.
~\\
\\
\textbf{Acknowledgment} 
The work was started in 2018 when the second author Xu Xu was visiting Rutgers University. Xu Xu thanks Professor Feng Luo for his invitation to Rutgers University and communications and interesting on this work, and thanks Professor Tian Yang for communications during the visiting. 
The research of Xiaokai He is partially supported by the National Natural Science Foundation of China under Grant No. 12475049.
The research of Xu Xu is supported by the National Natural Science Foundation of China under Grant No. 12471057 and Hubei Provincial Natural Science Fund for Outstanding Research
Groups under Grant No. 2026AFA008. The research of Chao Zheng is supported by the China National Postdoctoral Program for Innovative Talents under grant no. BX20250068.
\\
\\
\noindent\textbf{AI Statement} The proof of Lemma \ref{Lem: 2} in this paper was first accomplished with the help of AI. We then check it with Wolfram Mathematica 12. For readers' convenience, we supply all the Mathematica codes. 
%The remaining text was polished with the help of AI.

\section{Admissible space of Thurston's hyperbolic sphere packing metrics}\label{section 2}

Suppose $\sigma=\{1234\}$ is a hyperbolic tetrahedron with edge lengths $l_{12}, l_{13}, l_{14}, l_{23}, l_{24}, l_{34}$ defined by (\ref{Eq: Thurston hyperbolic length}).
The admissible space $\Omega_{1234}(\Phi)$ of Thurston's hyperbolic sphere packing metrics $r$ on $\sigma=\{1234\}$ is the set of $(r_1, r_2, r_3, r_4)\in \mathbb{R}^4_{>0}$ such that the hyperbolic tetrahedron $\sigma=\{1234\}$ is non-degenerate.
Set
\begin{equation}\label{Eq: G}
\begin{aligned}
G(l)=(\cosh l_{ij})_{4\times 4}=\left(
\begin{array}{ccccc}
1 & \cosh l_{12} & \cosh l_{13} & \cosh l_{14}\\
\cosh l_{12} & 1 & \cosh l_{23} & \cosh l_{24}\\
\cosh l_{13} & \cosh l_{23} & 1 & \cosh l_{34}\\
\cosh l_{14} & \cosh l_{24} & \cosh l_{34} & 1 \\
\end{array}
\right).
\end{aligned}
\end{equation}
Here we set $l_{ii}=0$.

\begin{proposition}[\cite{TW}, Proposition 2.4.1]\label{Prop: non-degenerate condition}
A non-degenerate hyperbolic tetrahedron $\sigma=\{1234\}$ with edge lengths $l_{12}, l_{13}, l_{14}, l_{23}, l_{24}, l_{34}$ exists
if and only if all principal minors of $-G$ are negative.
\end{proposition}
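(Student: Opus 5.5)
Since this is a cited result ([TW, Proposition 2.4.1]), I would reprove it in the hyperboloid model $\mathbb{H}^3=\{x\in\mathbb{R}^{1,3}: \langle x,x\rangle=-1,\ x_0>0\}$, with the Minkowski form $\langle\cdot,\cdot\rangle$ of signature $(-,+,+,+)$. The key fact is that for $v_i,v_j\in\mathbb{H}^3$ we have $\langle v_i,v_j\rangle=-\cosh d(v_i,v_j)$. Thus $-G$ is exactly the Gram matrix $(\langle v_i,v_j\rangle)$ of four points with the prescribed pairwise distances. I read ``all principal minors negative'' as the leading principal minors $\det(-G_k)$, $k=1,\dots,4$; the statement as written cannot literally mean every principal minor, since the $2\times2$ minors are $1-\cosh^2 l_{ij}<0$ automatically. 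I would make this interpretation explicit.

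\emph{Necessity.} Let $v_1,\dots,v_4\in\mathbb{H}^3$ be the vertices of a non-degenerate tetrahedron. Then they are linearly independent in $\mathbb{R}^{1,3}$, so $-G$ is the Gram matrix of a basis of $\mathbb{R}^{1,3}$. By Sylvester's law of inertia it therefore has signature $(1,3)$, and in particular $\det(-G)<0$. For $k<4$, the span $W_k$ of $v_1,\dots,v_k$ contains the timelike vector $v_1$. Hence $W_k$ is Lorentzian of signature $(1,k-1)$, and its Gram determinant $\det(-G_k)$ has sign $(-1)^1=-1$. This gives negativity of every leading minor, and in fact of every principal minor of size at least $2$.

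\emph{Sufficiency.} I would build the points inductively. Put $v_1=(1,0,0,0)$. Suppose $v_1,\dots,v_{k}$ have been constructed with Gram matrix $-G_k$, spanning a Lorentzian subspace $W_k$ of dimension $k$. I want $v_{k+1}=w+su$, where $w\in W_k$ and $u$ is a unit spacelike vector orthogonal to $W_k$.
\begin{itemize}[leftmargin=*]
\item The condition $\langle v_{k+1},v_i\rangle=-\cosh l_{i,k+1}$ for $i\le k$ determines $w$ uniquely, because $-G_k$ is nonsingular.
\item The condition $\langle v_{k+1},v_{k+1}\rangle=-1$ then reads $s^2=-1-\langle w,w\rangle$.
\item A Schur complement computation gives $\det(-G_{k+1})=\det(-G_k)\,(-1-\langle w,w\rangle)$.
\item Since both determinants are negative, $-1-\langle w,w\rangle>0$, so a real $s\neq0$ exists.
\end{itemize}
This keeps $v_{k+1}$ outside $W_k$, which gives linear independence and hence non-degeneracy. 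Replacing $v_{k+1}$ by $-v_{k+1}$ if needed would break the prescribed inner products, so the sheet must be handled another way. Since $\langle v_{k+1},v_1\rangle=-\cosh l<0$ and $v_1$ lies in the upper sheet, the vector $v_{k+1}$ automatically lies in the upper sheet as well. The $v_i$ are then genuine points of $\mathbb{H}^3$ whose distances are the given $l_{ij}$. Their convex hull is a tetrahedron, non-degenerate because the points are linearly independent, i.e.\ not contained in a totally geodesic plane.

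The step I expect to be the main obstacle is the Schur complement sign bookkeeping together with the sheet argument. I would first verify the determinant identity carefully. I would then note that the inductive hypothesis also needs $W_k$ Lorentzian, which follows from $v_1\in W_k$.
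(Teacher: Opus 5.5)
Your proof is correct. There is nothing in the paper to match it against: the paper gives no argument for this proposition and simply quotes \cite[Proposition 2.4.1]{TW}. What you supply is a self-contained replacement in the hyperboloid model, with two halves.
\begin{itemize}
\item Necessity follows from Sylvester's law of inertia, applied to $-G$ as the Gram matrix of linearly independent vectors spanning Lorentzian subspaces.
\item Sufficiency follows from the inductive construction $v_{k+1}=w+su$. The Schur complement identity $\det(-G_{k+1})=\det(-G_k)\,(-1-\langle w,w\rangle)$ gives $s^2>0$, and $\langle v_{k+1},v_1\rangle=-\cosh l_{1,k+1}<0$ puts $v_{k+1}$ on the upper sheet.
\end{itemize}

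Beyond removing the dependence on the citation, your route gives explicit vertices and, from the same Gram-matrix viewpoint, uniqueness up to isometry. It also gives a sharper fact: sufficiency uses only the leading minors. Since $\det(-G_1)$ and $\det(-G_2)$ are automatically negative, for arbitrary edge lengths the triangle inequality on a single face ($\det G_{\{1,2,3\}}>0$) together with $\det G<0$ already forces non-degeneracy.

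One point of framing should be fixed. Your stated reason for reading ``all principal minors'' as ``leading principal minors'' is a non sequitur. Conditions that hold automatically do not rule out the literal reading; they only make part of it vacuous. No reinterpretation is needed. Your necessity argument shows every principal minor of $-G$ is negative, and your sufficiency argument assumes only some of them, so you have proved the statement exactly as written.

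This matters for the paper, because it later uses the literal form. In \eqref{Eq: F26} it needs $\det M_{\widehat k}>0$ for every $k$, not just for one $3\times 3$ block.
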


Note that all principal minors of $-G$ being negative, together with $\det G<0$, is equivalent to the satisfaction of the triangle inequalities for all faces of the hyperbolic tetrahedron.
For a hyperbolic tetrahedron $\sigma=\{1234\}$ whose edge lengths are given by (\ref{Eq: Thurston hyperbolic length}), the triangle inequalities on its faces hold, as established in \cite{T1,X2,X3,Z}.
By Proposition \ref{Prop: non-degenerate condition}, to verify that $l_{12}, l_{13}, l_{14}, l_{23}, l_{24}, l_{34}$ define a non-degenerate hyperbolic tetrahedron, it suffices to consider the condition $\det G<0$.

In this section, to simplify the notations, we set
\begin{equation*}
\begin{aligned}
&c_i=\cosh r_i,\ s_i=\sinh r_i,\ \kappa_i=\coth r_i, \\
&\cos\Phi_{12}=x,\ \cos\Phi_{13}=y,\ \cos\Phi_{14}=z,\ \\
&\cos\Phi_{23}=u,\ \cos\Phi_{24}=v,\ \cos\Phi_{34}=w.
 \end{aligned}
\end{equation*}
See Figure \ref{Figure: tetrahedron} for the notations.
\begin{figure}[!ht]
\centering
\begin{overpic}[scale=1.2]{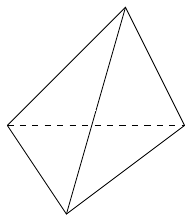}
\put (-1,41) {$2$}
\put (55,98) {$1$}
\put (28,-3) {$3$}
\put (84,41) {$4$}
\put (11,19) {$u$}
\put (28,45) {$v$}
\put (58,19) {$w$}
\put (25,71) {$x$}
\put (48,58) {$y$}
\put (72,70) {$z$}
\end{overpic}
\caption{A hyperbolic tetrahedron.}
\label{Figure: tetrahedron}
\end{figure}

Submitting (\ref{Eq: Thurston hyperbolic length}) into (\ref{Eq: G}) yields
\begin{equation*}
\begin{aligned}
\det G
%=&\left|
% \begin{array}{ccccc}
%  1 & \cosh l_{12} & \cosh l_{13} & \cosh l_{14}\\
%  \cosh l_{12} & 1 & \cosh l_{23} & \cosh l_{24}\\
%   \cosh l_{13} & \cosh l_{23} & 1 & \cosh l_{34}\\
%    \cosh l_{14} & \cosh l_{24} & \cosh l_{34} & 1 \\
%   \end{array}
%   \right|\\
=&\ s_1^2s_2^2s_3^2s_4^2(x^2+y^2+z^2+u^2+v^2+w^2 +x^2w^2+y^2v^2+z^2u^2\\
&-2xyvw-2xzuw-2yzuv-3)\\
&+s_1^2s_2^2s_3^2(x^2+y^2+u^2+2xyu-1)
+s_1^2s_2^2s_4^2 (x^2+z^2+v^2+2xzv-1)\\
&+s_1^2s_3^2s_4^2(y^2+z^2+w^2+2yzw-1)
+s_2^2s_3^2s_4^2 (u^2+v^2+w^2+2uvw-1)\\
&-2c_1c_2s_1s_2s_3^2s_4^2\left[x(1-w^2)+yvw+zuw+yu+zv\right]\\
&-2c_1c_3s_1s_2^2s_3s_4^2\left[y(1-v^2)+xvw+zuv+xu+zw\right]\\
&-2c_1c_4s_1s_2^2s_3^2s_4\left[z(1-u^2)+xuw+yuv+xv+yw\right]\\
&-2c_2c_3s_1^2s_2s_3s_4^2\left[u(1-z^2)+xzw+yzv+xy+vw\right]\\
&-2c_2c_4s_1^2s_2s_3^2s_4\left[v(1-y^2)+xyw+yzu+xz+uw\right]\\
&-2c_3c_4s_1^2s_2^2s_3s_4\left[w(1-x^2)+xyv+xzu+yz+uv\right].
\end{aligned}
\end{equation*}
Then
\begin{equation*}
\det G=s_1^2s_2^2s_3^2s_4^2 Q,
\end{equation*}
where
\begin{equation}\label{Eq: Q}
\begin{aligned}
Q=&\ Q_1+A_4(\kappa_4^2-1)+A_3(\kappa_3^2-1)
+A_2(\kappa_2^2-1)+A_1(\kappa_1^2-1)\\
&-2\kappa_1\kappa_2\left[x(1-w^2)+yvw+zuw+yu+zv\right]\\
&-2\kappa_1\kappa_3\left[y(1-v^2)+xvw+zuv+xu+zw\right]\\
&-2\kappa_1\kappa_4\left[z(1-u^2)+ xuw+yuv+xv+yw\right]\\
&-2\kappa_2\kappa_3\left[u(1-z^2)+ xzw+yzv+xy+vw\right]\\
&-2\kappa_2\kappa_4\left[v(1-y^2)+ xyw+yzu+xz+uw\right]\\
&-2\kappa_3\kappa_4\left[w(1-x^2)+xyv+xzu+yz+uv\right],
\end{aligned}
\end{equation}
with
\begin{equation}\label{Eq: Q_1}
\begin{aligned}
Q_1=&\ x^2+y^2+z^2+u^2+v^2+w^2+x^2w^2+y^2v^2 \\
&+z^2u^2-2xyvw-2xzuw-2yzuv-3,
\end{aligned}
\end{equation}
and
\begin{equation}\label{Eq: A}
\begin{aligned}
A_4=x^2+y^2+u^2+2xyu-1,\ A_3=x^2+z^2+v^2+2xzv-1,\\
A_2=y^2+z^2+w^2+2yzw-1,\ A_1=u^2+v^2+w^2+2uvw-1.
\end{aligned}
\end{equation}

As an application of Proposition \ref{Prop: non-degenerate condition}, we have the following result.

\begin{lemma}\label{Lem: non-degenerate}
A hyperbolic tetrahedron $\sigma=\{1234\}$ with edge lengths defined by (\ref{Eq: Thurston hyperbolic length}) is non-degenerate if and only if $Q<0$.
\end{lemma}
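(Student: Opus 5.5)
The proof is a short reduction to Proposition \ref{Prop: non-degenerate condition} together with the factorization of $\det G$ just computed. There are two steps. First, we reduce non-degeneracy to the single condition $\det G<0$. Second, we transfer the sign of $\det G$ to $Q$.

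\textbf{Step 1: reduction to $\det G<0$.} By Proposition \ref{Prop: non-degenerate condition}, $\sigma$ is non-degenerate if and only if every principal minor of $-G$ is negative. We go through the principal minors by size.
\begin{itemize}
\item The $1\times1$ minors equal $-1<0$ automatically.
\item The $2\times 2$ minors equal $1-\cosh^2 l_{ij}=-\sinh^2 l_{ij}$. These are negative because $l_{ij}>0$. Indeed, from (\ref{Eq: Thurston hyperbolic length}) with $\cos\Phi_{ij}\ge 0$ we get $\cosh l_{ij}\ge \cosh r_i\cosh r_j>1$.
\item The $3\times3$ minors equal $-\det$ of the Gram-type matrix of the face $\{ijk\}$. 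Up to the standard identity, this quantity is negative exactly when the three lengths $l_{ij},l_{ik},l_{jk}$ satisfy the strict hyperbolic triangle inequalities.
\end{itemize}
As recalled just before the lemma, the triangle inequalities on every face hold for lengths given by (\ref{Eq: Thurston hyperbolic length}) with $\Phi\in[0,\frac{\pi}{2}]$ (cited from \cite{T1,X2,X3,Z}). Hence every $3\times 3$ principal minor of $-G$ is negative for all $r\in\mathbb{R}^4_{>0}$.

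The only remaining principal minor is $\det(-G)=\det G$, since $(-1)^4=1$. So non-degeneracy is equivalent to $\det G<0$.

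\textbf{Step 2: transfer the sign to $Q$.} We use the identity $\det G=s_1^2s_2^2s_3^2s_4^2\,Q$ derived above. Since $r_i>0$, each $s_i=\sinh r_i>0$, so the prefactor is strictly positive. Therefore $\det G<0$ if and only if $Q<0$, which proves the lemma.

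\textbf{Main obstacle.} The main point to check is the factorization itself, that is, that dividing the expanded $\det G$ by $s_1^2s_2^2s_3^2s_4^2$ yields exactly (\ref{Eq: Q}). For the terms involving $c_ic_j$ this is immediate, since $c_ic_j/(s_is_j)=\kappa_i\kappa_j$. For the terms $s_1^2s_2^2s_3^2A_4$ and the like, we must rewrite them as $s_1^2s_2^2s_3^2s_4^2\cdot A_4/s_4^2$ and use
\[
\frac{1}{s_4^2}=\kappa_4^2-1 .
\]
This holds because $\coth^2 r-1=1/\sinh^2 r$. We must also confirm that the coefficient of $s_1^2s_2^2s_3^2s_4^2$ in the expansion is precisely $Q_1$.

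A second point concerns the $3\times 3$ minors. If one prefers not to rely on the citation, one can check directly that $-\det$ of the $3\times 3$ block equals a positive multiple of the face analogue of $Q$, which is negative when the face triangle is non-degenerate. We would simply invoke the cited triangle inequalities.
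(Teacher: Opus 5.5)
Your proposal is correct and follows essentially the paper's argument: the paper also uses Proposition~\ref{Prop: non-degenerate condition} together with the cited face triangle inequalities to reduce non-degeneracy to $\det G<0$, and then reads off the sign from $\det G=s_1^2s_2^2s_3^2s_4^2\,Q$ with $s_i>0$. Your minor-by-minor check ($1\times 1$ trivially, $2\times 2$ via $\cosh l_{ij}>1$, $3\times 3$ via the triangle inequalities) simply spells out what the paper states in a single remark before the lemma.
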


We futher have the following interesting observation.
\begin{lemma}\label{Lem: trivial}
For a hyperbolic tetrahedron $\sigma=\{1234\}$ with edge lengths defined by (\ref{Eq: Thurston hyperbolic length}),
if $A_1\leq 0, A_2\leq 0, A_3\leq 0, A_4\leq 0$,
then $Q\leq 0$.
Furthermore, $Q=0$ if and only if $(x,y,z,u,v,w)\in \widetilde{S}$, where 
\begin{eqnarray}\label{Eq: exception weights}
\widetilde{S}=\{(x,y,z,u,v,w)\in [0,1]^6 \mid (1,0,0,0,0,1), (0,1,0,0,1,0), (0,0,1,1,0,0)\}.
\end{eqnarray}
\end{lemma}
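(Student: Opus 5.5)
The plan is to rewrite $Q$ as a quadratic form in $\kappa=(\kappa_1,\kappa_2,\kappa_3,\kappa_4)$ governed by the $4\times 4$ matrix of the weights, so that the hypotheses $A_i\le 0$ become sign information on its principal minors.

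\textbf{Step 1: a determinant identity.} Let $M$ be the symmetric matrix with diagonal entries $-1$ and off-diagonal entries $M_{ij}=\cos\Phi_{ij}$. Put $S=\diag(s_1,\dots,s_4)$ and $c=(c_1,\dots,c_4)^T$. Since $\cosh l_{ij}=c_ic_j+s_is_j\cos\Phi_{ij}$ and $c_i^2-s_i^2=1$, we have
\begin{equation*}
G=cc^T+SMS .
\end{equation*}
The matrix determinant lemma, written with the adjugate so that invertibility of $M$ is not needed, gives
\begin{equation*}
\det G=s_1^2s_2^2s_3^2s_4^2\big(\det M+\kappa^T\adj(M)\,\kappa\big).
\end{equation*}
Hence $Q=\det M+\kappa^T\adj(M)\kappa$. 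I would check this against \eqref{Eq: Q}:
\begin{itemize}
\item the $3\times 3$ principal minor of $M$ omitting index $i$ is exactly $A_i$, so the diagonal of $\adj(M)$ is $(A_1,\dots,A_4)$;
\item $\det M=Q_1-\sum_i A_i$;
\item the off-diagonal cofactors are minus the brackets multiplying $2\kappa_i\kappa_j$.
\end{itemize}

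\textbf{Step 2: spectral information from the hypotheses.} Set $N=-M$. It has unit diagonal, its $2\times 2$ principal minors are $1-\cos^2\Phi_{ij}\ge 0$, and its $3\times 3$ principal minors are $-A_i\ge 0$. Since $\adj(N)=-\adj(M)$ in dimension $4$ and $\det N=\det M$, we get
\begin{equation*}
Q=\det N-\kappa^T\adj(N)\kappa .
\end{equation*}
I would split into two cases.
\begin{itemize}
\item If $N$ is positive semidefinite, then $\adj(N)$ is positive semidefinite. One then needs $\kappa^T\adj(N)\kappa\ge\det N$. I would aim to derive this from $\kappa_i>1$ together with the Cauchy interlacing and eigen-expansion $\adj(N)=\sum_k\big(\prod_{m\ne k}\lambda_m\big)v_kv_k^T$.
\item If $N$ is not positive semidefinite, the nonnegative $3\times 3$ minors force exactly one negative eigenvalue, so $\det N<0$. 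Then $\adj(N)=\det N\cdot N^{-1}$ and
\begin{equation*}
Q=\det N\,\big(1-\kappa^TN^{-1}\kappa\big).
\end{equation*}
It suffices to show $\kappa^TN^{-1}\kappa\le 1$ for all $\kappa\in(1,\infty)^4$. Since $N^{-1}$ has one negative eigenvalue, I expect to show instead that $\kappa$ lies in the negative cone of $N^{-1}$, i.e.\ $\kappa^TN^{-1}\kappa\le 0$. I would first check this at $\kappa=(1,1,1,1)$, where it is the degeneration $r_i\to\infty$, and then show the cone condition persists as the $\kappa_i$ increase. For the latter I would use that the relevant cofactors, the off-diagonal entries of $\adj(N)$, have a definite sign when all $\cos\Phi_{ij}\ge 0$.
\end{itemize}

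\textbf{Step 3: the equality case.} I would trace when each inequality above is sharp. Equality should force $\det N=0$ and $\adj(N)$ to annihilate $\kappa$ along the relevant direction. For weights in $[0,1]$ this should pin $N$ down to a matrix that splits into two $2\times 2$ blocks of the form $\begin{pmatrix}1&-1\\-1&1\end{pmatrix}$, i.e.\ a pair of opposite edges with $\cos\Phi=1$ and all other weights $0$. These are exactly the three points listed in $\widetilde S$, and conversely one checks directly that $Q=0$ there.

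\textbf{Main obstacle.} The hard step is the quadratic estimate in Step 2 (either $\kappa^T\adj(N)\kappa\ge\det N$ or $\kappa^TN^{-1}\kappa\le 0$), uniformly over $\kappa\in(1,\infty)^4$. This is where the constraint $\cos\Phi_{ij}\in[0,1]$ must really be used. If the spectral argument does not close, my fallback is direct: regard $Q$ as a function of each $\kappa_i$ separately. Since the leading coefficient $A_i\le 0$, $Q$ is concave in $\kappa_i$, and I would show its derivative is nonpositive at $\kappa_i=1$. This reduces $Q\le 0$ to the boundary value at $\kappa=(1,1,1,1)$, which is a polynomial inequality in $x,y,z,u,v,w$, together with sign checks of the off-diagonal cofactors, to be verified by case analysis or computer algebra.
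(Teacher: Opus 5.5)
Your Step 1 is correct: with your $M$ equal to the paper's $P$, the identity $Q=\det M+\kappa^T\adj(M)\kappa$ is exactly \eqref{th-rank-one-det}, which the paper uses later. The spectral route can also be made to work. As written, though, the proposal does not prove the lemma. Both estimates in Step 2 are stated only as intentions, and Step 3 only asserts that equality ``should'' force the configurations of $\widetilde S$. These steps are the whole content of the statement. Two smaller points also need repair:
\begin{itemize}
\item One negative and three nonnegative eigenvalues give only $\det N\le 0$, not $\det N<0$.
\item ``Exactly one negative eigenvalue'' needs two ingredients. Every $3\times3$ principal submatrix of $N$ is positive semidefinite, because all its principal minors ($1$, $1-\cos^2\Phi_{ab}$, $-A_i$) are nonnegative. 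Interlacing then gives the claim.
\end{itemize}

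The missing observation is that $\adj(N)$ is \emph{entrywise} nonnegative. Its diagonal entries are $-A_i\ge0$, and its off-diagonal entries are the brackets in \eqref{Eq: Q}, which are nonnegative because all weights lie in $[0,1]$. Hence $\kappa^T\adj(N)\kappa\ge0$ and $Q\le\det N$, which settles every case with $\det N\le0$ at once. You need neither an evaluation at $\kappa=(1,1,1,1)$ nor a persistence argument, and spectral information about $N^{-1}$ alone would not give this. If $\det N>0$, interlacing (at most one negative eigenvalue) forces $N$ to be positive definite. Cauchy--Schwarz then gives $\kappa_1^2=(e_1^T\kappa)^2\le(e_1^TNe_1)(\kappa^TN^{-1}\kappa)=\kappa^TN^{-1}\kappa$, so $\kappa^TN^{-1}\kappa>1$ and $Q=\det N\,(1-\kappa^TN^{-1}\kappa)<0$.

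For the equality case, the above shows that $Q=0$ forces $\det N=0$ and $\kappa^T\adj(N)\kappa=0$. Entrywise nonnegativity and $\kappa_i>0$ then give $\adj(N)=0$, i.e.\ $\operatorname{rank}N\le2$. A negative eigenvalue is now impossible. It would force $\lambda_2=\lambda_3=0$, and interlacing would make each $3\times3$ principal submatrix positive semidefinite of rank at most one, with unit diagonal and nonpositive off-diagonal entries. Such a matrix is $ww^T$ with $w_a=\pm1$ and $w_aw_b=-1$ for all three pairs, which is absurd. So $N$ is the Gram matrix of four unit vectors in $\mathbb R^2$ with pairwise nonpositive inner products. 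Their four cyclic angular gaps are each at least $\pi/2$ and sum to $2\pi$, so the vectors are $e,f,-e,-f$ for an orthonormal pair. This is exactly $\widetilde S$; conversely, there $\operatorname{rank}N=2$, so $Q\equiv0$.

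With these additions you would have a complete proof by a route different from the paper's. The paper stays elementary:
\begin{itemize}
\item Because $A_i\le0$, $\kappa_i>1$ and the brackets are nonnegative, it drops terms to get $Q\le Q_1$.
\item It adds the four inequalities $A_i\le0$, substitutes into $Q_1$, and completes squares.
\item After ordering $xw\le yv\le zu$ this gives $Q_1\le-1+(xw-zu)^2\le0$, and the equality case is read off from these inequalities.
\end{itemize}
Your fallback reduces to $Q|_{\kappa=(1,1,1,1)}\le0$, which follows from the paper's $Q_1\le0$, but you leave it to case analysis or computer algebra. Summing the $A_i$ constraints and completing squares is the idea it lacks. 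The spectral version gives a transparent reason for the equality set (a degenerate planar Gram configuration). The paper's version needs no linear algebra beyond the expansion of $Q$.
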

\proof
Given $A_i\leq 0$ for all $i\in\{1,2,3,4\}$ and $\kappa_i=\coth r_i>1$, 
(\ref{Eq: Q}) yields $Q\leq Q_1$, while (\ref{Eq: A}) gives
\begin{equation}\label{Eq: F5}
x^2+y^2+z^2+u^2+v^2+w^2\leq 2-xyu-xzv-yzw-uvw.
\end{equation}
Submitting (\ref{Eq: F5}) into (\ref{Eq: Q_1}) yields
\begin{equation*}
\begin{aligned}
Q_1\leq&\ 2-xyu-xzv-yzw-uvw +x^2w^2+y^2v^2+z^2u^2\\
&\ \ \ \ -2xyvw-2xzuw-2yzuv-3\\
=&-1-xyu-xzv-yzw-uvw-xyvw-xzuw-yzuv\\
&+\frac{1}{2}[(xw-yv)^2+(xw-zu)^2+(yv-zu)^2]\\
\leq & -1+\frac{1}{2}[(xw-yv)^2+(xw-zu)^2+(yv-zu)^2].
 \end{aligned}
\end{equation*}
For the term $(xw-yv)^2+(xw-zu)^2+(yv-zu)^2$, without of loss of generality, we assume
\begin{equation*}
0\leq xw\leq yv\leq zu\leq 1.
\end{equation*}
Under this condition, we have
\begin{equation*}
(xw-yv)^2+(xw-zu)^2+(yv-zu)^2\leq 2(xw-zu)^2,
\end{equation*}
which implies
\begin{equation*}
Q_1\leq  -1+\frac{1}{2}[(xw-yv)^2+(xw-zu)^2+(yv-zu)^2]
\leq -1+(xw-zu)^2\leq 0,
\end{equation*}
with equality achieved if and only if $zu=1$, $xw=0$,  and $yv=0$ or $1$.
Hence $Q\leq 0$.

Note that $zu=1$ implies $z=u=1$.
Submitting $z=u=1$ into the condition $A_i\leq0$ for all $i\in\{1,2,3,4\}$, we have
\begin{equation*}
\begin{aligned}
&x^2+y^2+2xy\leq 0,\ x^2+v^2+2xv\leq 0,\\
&y^2+w^2+2yw\leq 0,\ v^2+w^2+2vw\leq 0,
\end{aligned}
\end{equation*}
which implies that $x=y=v=w=0$. 
Substituting $z=u=1$ and $x=y=v=w=0$ into (\ref{Eq: Q}), we obtain $Q=0$.
By symmetry, $Q=0$ also holds in the cases that $x=w=1, y=z=u=v=0$ and $y=v=1, x=z=u=w=0$.
\qed

Please refer to Figure \ref{Figure: four circles} for the case that $x=w=1$ and $y=z=u=v=0$.
\begin{figure}[!ht]
\centering
\begin{overpic}[scale=0.5]{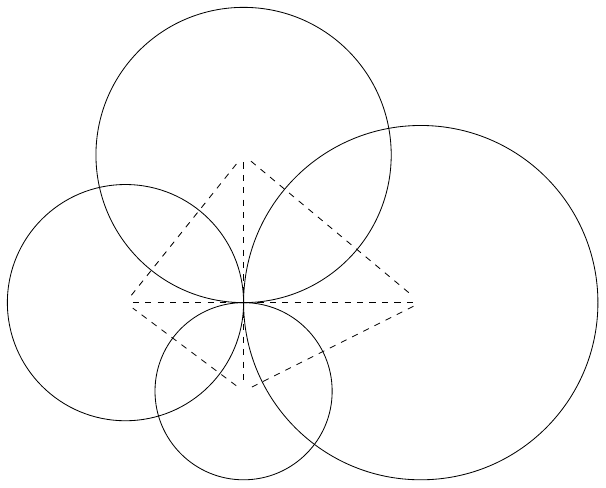}
\put (16,24) {$1$}
\put (37,56) {$4$}
\put (37,8) {$3$}
\put (72,24) {$2$}
\end{overpic}
\caption{Degenerate sphere packing metric.}
\label{Figure: four circles}
\end{figure}
In the following, we require the weight on the edges to be a map $\eta: E_{\sigma}\rightarrow [0,1]$ with $\eta(E_\sigma)\in [0, 1]^6\setminus \widetilde{S}$, where $\eta_{ij}=\cos\Phi_{ij}$.

Set
\begin{equation*}
\begin{aligned}
Q_2=&\ Q_1-A_1-A_2-A_3-A_4\\
=&\ 1+x^2w^2+y^2v^2+z^2u^2-(x^2+y^2+z^2+u^2+v^2+w^2)\\
&-2xyvw-2xzuw-2yzuv-2xyu-2xzv-2yzw-2uvw.
\end{aligned}
\end{equation*}

\begin{lemma}\label{Lem: Q_2<0}
If there exists an index $i\in \{1,2,3,4\}$ such that $A_i\geq0$, then $Q_2\leq0$.
In particular, if $A_i>0$, then $Q_2<0$.
\end{lemma}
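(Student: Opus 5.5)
By the symmetry of $Q_2$ under relabelling the vertices of $\sigma=\{1234\}$, I will assume that $A_4\geq 0$. The other three cases follow by permuting $(x,y,z,u,v,w)$ as induced by the vertex permutations. Recall that $A_4=x^2+y^2+u^2+2xyu-1$ involves only the weights $x,y,u$ on the face $\{123\}$. The weights $z,v,w$ are those on the edges $\{14\},\{24\},\{34\}$, which are incident to vertex $4$.

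The plan is to organize $Q_2$ by degree in $(z,v,w)$. First, the terms of $Q_2$ free of $z,v,w$ are $1-x^2-y^2-u^2-2xyu=-A_4$. Second, $Q_2$ has no terms that are linear in $(z,v,w)$; every remaining term is quadratic in $(z,v,w)$ with coefficients depending on $x,y,u$. Collecting these terms should give
\begin{equation*}
Q_2=-A_4-q(z,v,w),
\end{equation*}
where
\begin{equation*}
\begin{aligned}
q(z,v,w)=&\ (1-u^2)z^2+(1-y^2)v^2+(1-x^2)w^2\\
&+2(x+yu)zv+2(y+xu)zw+2(u+xy)vw.
\end{aligned}
\end{equation*}
For example, the coefficient of $zw$ in $Q_2$ comes from $-2xzuw-2yzw$, which gives $-2(y+xu)$. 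The coefficient of $w^2$ comes from $-w^2+x^2w^2$, which gives $-(1-x^2)$. The other coefficients are checked in the same way. The symmetric matrix of $q$ is the adjugate of the Gram-type matrix of the face $\{123\}$, which is a useful consistency check.

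The key step is then just a sign observation. Since $x,y,z,u,v,w\in[0,1]$, every coefficient of $q$ is nonnegative: the diagonal coefficients $1-u^2$, $1-y^2$, $1-x^2$ are nonnegative, and so are the mixed coefficients $x+yu$, $y+xu$, $u+xy$. Because $z,v,w\geq 0$, every monomial of $q$ is nonnegative, so $q(z,v,w)\geq 0$. Hence
\begin{equation*}
Q_2=-A_4-q(z,v,w)\leq -A_4\leq 0,
\end{equation*}
and if $A_4>0$ then $Q_2\leq -A_4<0$.

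The only real obstacle is the bookkeeping needed to confirm that the expansion of $Q_2$ splits exactly as $-A_4-q$. In particular, one must check that no stray terms in $(z,v,w)$ of degree one or three survive, and that every cross term carries the sign claimed. I would verify this coefficient by coefficient against the explicit formula for $Q_2$, possibly cross-checking with symbolic computation as the paper does elsewhere.
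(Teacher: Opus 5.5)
Your proof is correct and is essentially the paper's argument. The paper treats the case $A_1\geq 0$ by writing $Q_2=-A_1-(\text{a quadratic form in } x,y,z \text{ with nonnegative coefficients})$, which is your decomposition $Q_2=-A_4-q(z,v,w)$ after relabeling; your remark that the coefficient matrix of $q$ is the adjugate of the face matrix (so the split is $\det P$ expanded along the last row and column) is a useful check the paper does not state.
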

\proof
%Note that $x,y,z,u,v,w\in [0,1]$ and $Q_2$ is decreasing in $x,y,z,u,v,w$.
If $A_1=u^2+v^2+w^2+2uvw-1\geq0$, then
\begin{equation*}
\begin{aligned}
Q_2=&\ (1-u^2-v^2-w^2-2uvw)+x^2(w^2-1)+y^2(v^2-1)+z^2(u^2-1)\\
&-2xyvw-2xzuw-2yzuv-2xyu-2xzv-2yzw\leq0.
\end{aligned}
\end{equation*}
Specially, if $A_1>0$, then $Q_2<0$.
Similar results also apply to $A_2$, $A_3$, and $A_4$.
\qed

By Lemma \ref{Lem: non-degenerate}, $r=(r_1, r_2,r_3, r_4)\in \mathbb{R}^4_{>0}$ is a degenerate Thurston's hyperbolic sphere packing metric if and only if $\kappa=(\kappa_1,\kappa_2,\kappa_3,\kappa_4)=(\coth r_1,\coth r_2,\coth r_3,\coth r_4)\in (1,+\infty)^4$ satisfies
\begin{equation*}
Q\geq 0.
\end{equation*}
This is equivalent to 
\begin{eqnarray*}
A_1\kappa_1^2+B_1\kappa_1+C_1\geq 0
\end{eqnarray*}
with
\begin{equation}\label{Eq: ABC}
\begin{aligned}
B_1=&-2[\kappa_2(x(1-w^2)+yvw+zuw+yu+zv)\\
&+\kappa_3(y(1-v^2)+xvw+zuv+xu+zw)\\
&+\kappa_4(z(1-u^2)+xuw+yuv+xv+yw)],\\
C_1=&\ Q_2+A_2\kappa_2^2+A_3\kappa_3^2+A_4\kappa_4^2\\
&-2\kappa_2\kappa_3\left[u(1-z^2)+ xzw+yzv+xy+vw\right]\\
&-2\kappa_2\kappa_4\left[v(1-y^2)+ xyw+yzu+xz+uw\right]\\
&-2\kappa_3\kappa_4\left[w(1-x^2)+xyv+xzu+yz+uv\right],
\end{aligned}
\end{equation}

\begin{corollary}\label{Cor: positive Delta}
If $A_1=u^2+v^2+w^2+2uvw-1>0$, then $\Delta_1=B_1^2-4A_1C_1>0$.
\end{corollary}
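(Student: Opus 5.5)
The plan is to treat $\Delta_1$ as a polynomial in $(\kappa_2,\kappa_3,\kappa_4)$ and split it into a constant term and a homogeneous quadratic part. By (\ref{Eq: ABC}), $B_1$ is linear and homogeneous in $(\kappa_2,\kappa_3,\kappa_4)$. The term $C_1$ equals $Q_2$ plus a homogeneous quadratic form in $(\kappa_2,\kappa_3,\kappa_4)$. Hence
\begin{equation*}
\Delta_1=B_1^2-4A_1C_1=-4A_1Q_2+\mathcal{P}(\kappa_2,\kappa_3,\kappa_4),
\end{equation*}
where $\mathcal{P}$ is a homogeneous quadratic form whose coefficients are polynomials in $x,y,z,u,v,w$. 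Since $A_1>0$, Lemma \ref{Lem: Q_2<0} gives $Q_2<0$, so the constant term satisfies $-4A_1Q_2>0$. It therefore suffices to show that $\mathcal{P}$ is positive semidefinite on $\mathbb{R}^3$, or at least nonnegative on $(1,+\infty)^3$.

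To handle $\mathcal{P}$, I would use the structure of $Q$ as a quadratic form. Write $Q=(1,\kappa_1,\kappa_2,\kappa_3,\kappa_4)\,N\,(1,\kappa_1,\kappa_2,\kappa_3,\kappa_4)^T$ for a symmetric $5\times5$ matrix $N$ whose entries come from (\ref{Eq: Q}). Here $N_{\kappa_1\kappa_1}=A_1$, the off-diagonal entries are the bracketed coefficients (with sign $-1$), and the $(1,1)$ entry is $Q_2$.

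For fixed $\kappa'=(\kappa_2,\kappa_3,\kappa_4)$, the discriminant in $\kappa_1$ of the part of $Q$ that does not involve the constant row is
\begin{equation*}
-4\,\kappa'^{T}\big(A_1N'-n n^{T}\big)\kappa'.
\end{equation*}
Here $N'$ is the $3\times3$ block of $N$ indexed by $\kappa_2,\kappa_3,\kappa_4$, and $n$ is the vector of mixed coefficients between $\kappa_1$ and $\kappa'$. The entries of $A_1N'-nn^T$ are $2\times2$ minors of $N$ that contain the $\kappa_1$ row and column. I expect these minors to be expressible, via the Desnanot–Jacobi / Sylvester identity, through the $3\times3$ Gram-type determinants of the faces, i.e. through $A_2,A_3,A_4$ and factors such as $1-x^2$, $1-y^2$, $1-z^2$.

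The main obstacle is proving $-(A_1N'-nn^T)\succeq 0$. I would first check by direct symbolic expansion that each diagonal entry factors. For example, the $\kappa_2^2$ coefficient should be $[x(1-w^2)+yvw+zuw+yu+zv]^2-A_1A_2$, which I expect to equal $(1-w^2)\cdot(\text{a square or nonnegative product in } x,y,z,u,v,w)$. I would then test whether the whole matrix factors as $D\,S\,D$, with $D$ diagonal and $S$ an explicitly PSD matrix, for instance a Gram matrix of vectors built from the cosines.

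If full semidefiniteness fails, the fallback is to use $\kappa_j>1$ and $x,\dots,w\in[0,1]$. The aim would then be to show the off-diagonal coefficients of $\mathcal{P}$ are nonnegative, which makes $\mathcal{P}\ge0$ on the positive orthant once the diagonal entries are nonnegative. The heavy polynomial identities in this step are exactly where I would rely on computer algebra verification, in the spirit of the paper's treatment of Lemma \ref{Lem: 2}.

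Combining $\mathcal{P}\ge0$ with $-4A_1Q_2>0$ gives $\Delta_1>0$.
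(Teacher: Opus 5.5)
Your reduction is correct and matches the paper's split: $\Delta_1=-4A_1Q_2+\mathcal P$ with $\mathcal P=-4\kappa'^T(A_1N'-nn^T)\kappa'$. The gap is that the sign of $\mathcal P$ carries all the content of the statement, and the route you put first for it cannot work. In fact $A_1N'-nn^T=Q_2\,\adj(P_{\widehat 1})$, where $P_{\widehat1}$ is the $\{2,3,4\}$ block of the matrix $P$ of Section 3: it has $-1$ on the diagonal and $u,v,w$ off it. Since $\det P_{\widehat1}=A_1>0$ and $\operatorname{tr}P_{\widehat1}=-3$, $P_{\widehat1}$ has one positive and two negative eigenvalues. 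Hence $\adj(P_{\widehat1})=A_1P_{\widehat1}^{-1}$, and with it $\mathcal P$ (note $-Q_2>0$), is indefinite on $\mathbb R^3$. For $u=v=w=1$, for instance, $\adj(P_{\widehat1})$ has zero diagonal and all off-diagonal entries $2$, with eigenvalues $4,-2,-2$. So $-(A_1N'-nn^T)\succeq0$ is false, and no $DSD$/Gram factorization exists. Your guess for the diagonal also goes wrong in a way that matters. One has $[x(1-w^2)+yvw+zuw+yu+zv]^2-A_1A_2=-(1-w^2)Q_2$; this is the identity $Q_3=(1-w^2)Q_2$ from the proof of Lemma \ref{Lem: none zero}. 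The factor $-Q_2$ is not a square (it equals $-1$ at $\eta=0$), so its sign comes only from Lemma \ref{Lem: Q_2<0}, i.e.\ from $A_1>0$ once more.

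The missing idea is that $-4Q_2$ is a common factor of the entire discriminant, and your Jacobi instinct produces it. Because $Q=\det M=\det P+\kappa^T\adj(P)\kappa$ by \eqref{th-rank-one-det}, your matrix is $N=\diag(\det P,\adj(P))$. Thus $Q_2=\det P$, $A_1=\adj(P)_{11}$, $n=(\adj(P)_{12},\adj(P)_{13},\adj(P)_{14})^T$, and $N'$ is the $\{2,3,4\}$ block of $\adj(P)$. Jacobi's complementary-minor theorem for $2\times2$ minors of an adjugate gives $\adj(P)_{11}\adj(P)_{ab}-\adj(P)_{1a}\adj(P)_{1b}=\det P\cdot\adj(P_{\widehat1})_{ab}$ for $a,b\in\{2,3,4\}$. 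Hence
\[
\Delta_1=-4Q_2\bigl[A_1+\kappa'^T\adj(P_{\widehat1})\kappa'\bigr],
\]
where $\adj(P_{\widehat1})$ has diagonal $1-w^2,1-v^2,1-u^2$ and off-diagonal entries $u+vw,\,v+uw,\,w+uv$. This is exactly the formula the paper obtains by direct expansion. With it, your fallback finishes the proof: all entries of $\adj(P_{\widehat1})$ are nonnegative and $\kappa_j>1$, so the bracket is at least $A_1>0$, while $-4Q_2>0$ by Lemma \ref{Lem: Q_2<0}. Without this factorization, your proposal asserts, but does not establish, the nonnegativity of any coefficient of $\mathcal P$.
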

\proof
Lengthy and direct calculations yield
\begin{equation*}
\begin{aligned}
\Delta_1=-4Q_2\bigg[&A_1
+\kappa_2^2(1-w^2)+\kappa_3^2(1-v^2)+\kappa_4^2(1-u^2)\\
&+2\kappa_2\kappa_3(u+vw)+2\kappa_2\kappa_4(v+uw)
+2\kappa_3\kappa_4(w+uv)\bigg].
\end{aligned}
\end{equation*}
By Lemma \ref{Lem: Q_2<0}, if $A_1>0$, then $Q_2<0$ and thus $\Delta_1>0$.
\qed

\begin{remark}
If we take $Q$ as a quadratic function of $\kappa_i$, $i\in \{2,3,4\}$, then results similar to Corollary \ref{Cor: positive Delta} hold for $i\in \{2,3,4\}$.
\end{remark}

Set
\begin{equation}\label{Eq: h_i}
\begin{aligned}
h_1=-A_1\kappa_1&+\kappa_2[x(1-w^2)+yvw+zuw+yu+zv]\\
    &+\kappa_3[y(1-v^2)+xvw+zuv+xu+zw]\\
    &+\kappa_4[z(1-u^2)+xuw+yuv+xv+yw],\\
h_2=-A_2\kappa_2&+\kappa_1[x(1-w^2)+yvw+zuw+yu+zv]\\
    &+\kappa_3[u(1-z^2)+xzw+yzv+xy+vw]\\
    &+\kappa_4[v(1-y^2)+xyw+yzu+xz+uw],\\
h_3=-A_3\kappa_3&+\kappa_1[y(1-v^2)+xvw+zuv+xu+zw]\\
    &+\kappa_2[u(1-z^2)+xzw+yzv+xy+vw]\\
    &+\kappa_4[w(1-x^2)+xyv+xzu+yz+uv],\\
h_4=-A_4\kappa_4&+\kappa_1[z(1-u^2)+xuw+yuv+xv+yw]\\
    &+\kappa_2[v(1-y^2)+xyw+yzu+xz+uw]\\
    &+\kappa_3[w(1-x^2)+xyv+xzu+yz+uv].
\end{aligned}
\end{equation}
Then 
\begin{equation}\label{Eq: F4}
Q=Q_2-(\kappa_1h_1+\kappa_2h_2+\kappa_3h_3+\kappa_4h_4).
\end{equation}

Suppose $r=(r_1, r_2, r_3, r_4)\in \mathbb{R}^4_{>0}$ is a degenerate Thurston hyperbolic sphere packing metric.
Then by Lemma \ref{Lem: non-degenerate}, $Q\geq 0$.
Note that the weight $\eta(E_\sigma)\in [0, 1]^6\setminus \widetilde{S}$.
By Lemma \ref{Lem: trivial}, at least one of $A_1, A_2, A_3, A_4$ is positive.
Without loss of generality, assume $A_1>0$.
By Lemma \ref{Lem: Q_2<0}, $Q_2<0$.
From (\ref{Eq: F4}), at least one of $h_1, h_2, h_3, h_4$ is negative.

\begin{lemma}\label{Lem: none zero}
Suppose $(r_1, r_2, r_3, r_4)\in \mathbb{R}^4_{>0}$ is a degenerate Thurston's hyperbolic sphere packing metric on the tetrahedron $\sigma=\{1234\}$ with the weight $\eta(E_\sigma)\in [0,1]^6\setminus \widetilde{S}$.
Then there exists no subset $\{i,j\}\subset \{1,2,3,4\}$ such that $h_i\leq0$ and $h_j\leq0$.
\end{lemma}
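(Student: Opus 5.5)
The plan is to read $h_i$ as a partial derivative of $Q$ and argue by contradiction. From (\ref{Eq: Q}) and (\ref{Eq: h_i}) one checks directly that
\begin{equation*}
\frac{\partial Q}{\partial \kappa_i}=-2h_i,\qquad i=1,2,3,4.
\end{equation*}
Write $Q=Q_2+\kappa^{T}M\kappa$, where $M$ is the symmetric $4\times 4$ matrix with $M_{ii}=A_i$ and with off-diagonal entries equal to minus the bracketed coefficients in (\ref{Eq: Q}). Then $h=-M\kappa$, and this is consistent with (\ref{Eq: F4}).

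Suppose, for contradiction, that $h_1\le 0$ and $h_2\le 0$ at a point $\kappa\in(1,+\infty)^4$ with $Q(\kappa)\ge 0$; the other pairs follow by symmetry. Freeze $\kappa_3,\kappa_4$ and regard $Q$ as a quadratic polynomial $q(\kappa_1,\kappa_2)$. Its gradient at our point is $(-2h_1,-2h_2)$, which is componentwise nonnegative, so $Q$ is non-decreasing to first order in $\kappa_1$ and in $\kappa_2$. The idea is to combine this with the structure of the $2\times2$ block
\begin{equation*}
M_{12}=\begin{pmatrix} A_1 & -a_{12}\\ -a_{12} & A_2\end{pmatrix},\qquad a_{12}=x(1-w^2)+yvw+zuw+yu+zv\ge 0.
\end{equation*}

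The main step is an explicit polynomial identity expressing $Q$ in terms of $h_1,h_2$ and the remaining variables. I would first look for an identity of the type
\begin{equation*}
\big(a_{12}^2-A_1A_2\big)\,Q \;=\; (\text{quadratic form in } h_1,h_2) \;+\; (\text{explicitly negative terms}),
\end{equation*}
which amounts to completing the square in $(\kappa_1,\kappa_2)$. This is the two-variable analogue of the discriminant formula in the proof of Corollary \ref{Cor: positive Delta}, where $\Delta_1$ factored as $-4Q_2$ times a positive bracket. The goal is to arrange things so that $h_1,h_2\le0$, $\kappa_i>1$, $x,\dots,w\in[0,1]$, and Lemma \ref{Lem: Q_2<0} together force the right-hand side to be negative. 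Since $Q\ge 0$ by Lemma \ref{Lem: non-degenerate}, that would be a contradiction.

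Concretely, I would split into cases according to the signs of $A_1$ and $A_2$. If both are $\le 0$, then the bound $\kappa_1 h_1+\kappa_2 h_2\le 0$, together with $A_1\kappa_1^2\le 0$ and $A_2\kappa_2^2\le 0$, should let me bound $Q$ by a similar expression in $\kappa_3,\kappa_4$ alone, in the same spirit as Lemma \ref{Lem: trivial}. If one of them, say $A_1$, is $>0$, then $Q_2<0$ by Lemma \ref{Lem: Q_2<0}, and I would use the identity above. Finally, the exceptional weights $\widetilde{S}$ are exactly where equalities could sneak in, and the hypothesis $\eta(E_\sigma)\notin\widetilde{S}$ is what keeps the inequality strict.

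I expect the main obstacle to be finding and verifying this polynomial identity and controlling the signs of its leftover terms. These are high-degree polynomials in six cosines and four $\kappa_i$, so I would verify the identity symbolically with computer algebra. If a clean sum-of-squares form fails, the fallback is a continuity argument. Along the segment from $\kappa$ decreasing $\kappa_1,\kappa_2$ toward $1$, one would show that the set $\{h_1\le0,h_2\le0\}$ is convex and preserved, and that $Q$ stays $\ge 0$ along it. Then one derives a contradiction at the boundary, where the tetrahedron degenerates only in configurations excluded by Lemma \ref{Lem: trivial}.
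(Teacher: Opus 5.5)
Your proposal has a genuine gap. The step that carries the whole argument, the polynomial identity, is never produced, and the identity your template describes does not have the sign you need. Write $Q=Q_2+\kappa^T X\kappa$, where $X$ is your coefficient matrix; this is exactly $\adj P$ from Section 3, and $Q_2=\det P$. Completing the square in $(\kappa_1,\kappa_2)$ then gives exactly
\[
(a_{12}^2-A_1A_2)\,Q=-\bigl(A_2h_1^2+2a_{12}h_1h_2+A_1h_2^2\bigr)+Q_2^2\bigl(\kappa_3^2+2w\kappa_3\kappa_4+\kappa_4^2-1+w^2\bigr).
\]
The leftover term is nonnegative, and strictly positive when $Q_2\neq0$ because $\kappa_3,\kappa_4>1$. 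It is not negative, so setting this identity against $Q\ge0$ yields no contradiction. The fallback is also aimed the wrong way. At your point $\partial Q/\partial\kappa_i=-2h_i\ge0$, so decreasing $\kappa_1,\kappa_2$ can only lower $Q$ to first order. Also $\partial h_1/\partial\kappa_1=-A_1$, so decreasing $\kappa_1$ raises $h_1$ when $A_1>0$. Hence neither $Q\ge0$ nor $\{h_1\le0,h_2\le0\}$ is preserved along your path.

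The missing idea is that the contradiction comes from the sign structure of the $h_i$, not from $Q\ge0$. All the bracketed coefficients $a_{1j},a_{2j}$ are $\ge0$ and every $\kappa_j>0$, so $h_1\le0$ and $h_2\le0$ read
\[
A_1\kappa_1\ge a_{12}\kappa_2+a_{13}\kappa_3+a_{14}\kappa_4\ge0,\qquad A_2\kappa_2\ge a_{12}\kappa_1+a_{23}\kappa_3+a_{24}\kappa_4\ge0 .
\]
This gives $A_1,A_2\ge0$ immediately, so your case split on the signs of $A_1,A_2$ is unnecessary. Multiplying the two inequalities gives $A_1A_2\ge a_{12}^2$. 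On the other hand, Jacobi's identity $\det(\adj P)_{\{1,2\}}=\det P\cdot\det P_{\{3,4\}}$ gives $A_1A_2-a_{12}^2=(1-w^2)Q_2$. Lemma \ref{Lem: Q_2<0} gives $Q_2\le0$ because $A_1\ge0$, so $A_1A_2-a_{12}^2\le0$. Equality must therefore hold throughout, which forces $w=1$ or $Q_2=0$. Only at this point are $Q\ge0$ and $\eta\notin\widetilde S$ used. If $w=1$, the equality cases force $y=z=u=v=0$, and $\eta\notin\widetilde S$ forces $x\neq1$; then $Q=(x^2-1)(\kappa_3+\kappa_4)^2<0$. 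If $Q_2=0$, Lemma \ref{Lem: Q_2<0} forces $A_i\le0$ for all $i$, and Lemma \ref{Lem: trivial} together with $\eta\notin\widetilde S$ gives $Q<0$. Either way $Q<0$ contradicts degeneracy.
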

\proof
Without loss of generality, we assume $h_1\leq0$ and $h_2\leq0$.
By (\ref{Eq: h_i}), we have
\begin{equation}\label{Eq: F1}
\begin{aligned}
A_1\kappa_1
\geq&\ \kappa_2[x(1-w^2)+yvw+zuw+yu+zv]\\
    &+\kappa_3[y(1-v^2)+xvw+zuv+xu+zw]\\
    &+\kappa_4[z(1-u^2)+xuw+yuv+xv+yw],
\end{aligned}
\end{equation}
and 
\begin{equation}\label{Eq: F2}
\begin{aligned}
A_2\kappa_2
\geq&\ \kappa_1[x(1-w^2)+yvw+zuw+yu+zv]\\
    &+\kappa_3[u(1-z^2)+xzw+yzv+xy+vw]\\
    &+\kappa_4[v(1-y^2)+xyw+yzu+xz+uw].
\end{aligned}
\end{equation}
Hence, $A_1\geq 0$ and $A_2\geq 0$.
Moreover, multiplying the lefthand side of (\ref{Eq: F1}) with the lefthand side of (\ref{Eq: F2}) gives $A_1A_2\kappa_1\kappa_2$.
Multiplying the righthand side of (\ref{Eq: F1}) with the righthand side of (\ref{Eq: F2}) gives the sum of $\kappa_1\kappa_2[x(1-w^2)+yu+zv+yvw+zuw]^2$ and some nonnegative terms.
Thus
\begin{equation*}
Q_3:=A_1A_2-[x(1-w^2)+yvw+zuw+yu+zv]^2\geq0.
\end{equation*}
However, direct calculations give $Q_3=(1-w^2)Q_2$.
Lemma \ref{Lem: Q_2<0} shows that $Q_2\leq0$, and then $Q_3\leq0$.
Therefore, $Q_3\equiv0$, which implies $w=1$ or $Q_2=0$.

In the case of $w=1$,
since $Q_3\equiv0$,  the inequality (\ref{Eq: F1}) should be an equality, and the second and third terms in the righthand side of (\ref{Eq: F1}) vanish.
This implies $y=z=0$ and $x(u+v)=0$.
For (\ref{Eq: F2}), similar arguments yield $u=v=0$ and $x(y+z)=0$.
From the condition that the weight $\eta(E_\sigma)\in [0,1]^6\setminus \widetilde{S}$, we have $x\neq 1$.
Substituting $y=z=u=v=0$, $w=1$, and $x\neq1$ into (\ref{Eq: Q}) gives
\begin{equation*}
Q=(x^2-1)(\kappa_3+\kappa_4)^2<0.
\end{equation*}
By Lemma \ref{Lem: non-degenerate}, this contradicts the condition that $r$ is a degenerate Thurston's hyperbolic sphere packing metric.

In the case of $Q_2=0$, Lemma \ref{Lem: Q_2<0} gives $A_i\leq 0$ for all $i\in\{1,2,3,4\}$.
By Lemma \ref{Lem: trivial}, this implies $Q\leq 0$.
From the condition $\eta(E_\sigma)\in [0,1]^6\setminus \widetilde{S}$, Lemma \ref{Lem: trivial} further yields $Q<0$.
By Lemma \ref{Lem: non-degenerate}, this contradicts the requirement that $r$ is a degenerate Thurston's hyperbolic sphere packing metric.
\qed

\begin{corollary}\label{Cor: one negative three positive}
Suppose $(r_1, r_2, r_3, r_4)\in \mathbb{R}^4_{>0}$ is
a degenerate Thurston's hyperbolic sphere packing metric on a tetrahedron with $\eta(E_\sigma)\in [0,1]^6\setminus \widetilde{S}$.
Then one of $h_1, h_2, h_3, h_4$ is negative and the others are positive.
\end{corollary}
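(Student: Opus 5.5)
This corollary should follow by combining the discussion just before Lemma \ref{Lem: none zero} with Lemma \ref{Lem: none zero} itself. The plan is to show that at least one $h_i$ is negative, and then that no two of the $h_i$ can be nonpositive.

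\emph{Step 1: some $h_i$ is negative.} Let $r$ be degenerate with $\eta(E_\sigma)\in[0,1]^6\setminus\widetilde{S}$.
\begin{itemize}
\item By Lemma \ref{Lem: non-degenerate}, $Q\geq 0$.
\item If all of $A_1,\dots,A_4$ were $\leq 0$, Lemma \ref{Lem: trivial} would give $Q\leq 0$, with $Q=0$ only for weights in $\widetilde{S}$. Since $\eta\notin\widetilde{S}$, this would force $Q<0$, a contradiction. So some $A_i>0$.
\item Then Lemma \ref{Lem: Q_2<0} gives $Q_2<0$.
\item The identity (\ref{Eq: F4}) gives
\[
\kappa_1h_1+\kappa_2h_2+\kappa_3h_3+\kappa_4h_4=Q_2-Q<0 .
\]
\item Since every $\kappa_i=\coth r_i>1>0$, at least one $h_i$ must be negative; call it $h_{i_0}<0$.
\end{itemize}

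\emph{Step 2: the remaining $h_j$ are positive.} Take any $j\neq i_0$. If $h_j\leq 0$, then $h_{i_0}\leq 0$ and $h_j\leq 0$ hold simultaneously. This contradicts Lemma \ref{Lem: none zero}, whose hypotheses (a degenerate metric with $\eta(E_\sigma)\in[0,1]^6\setminus\widetilde{S}$) are exactly those of the corollary. Hence $h_j>0$ for all $j\neq i_0$.

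Together, the two steps show that exactly one of $h_1,h_2,h_3,h_4$ is negative and the other three are positive. The substantive work, which rules out two simultaneously nonpositive $h_i$ via the identity $Q_3=(1-w^2)Q_2$, is already done in Lemma \ref{Lem: none zero}.

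The only point needing care is Step 1. It must be run for an arbitrary degenerate metric, so the reduction "assume $A_1>0$" used before Lemma \ref{Lem: none zero} should be read as holding up to relabeling of the vertices. Since (\ref{Eq: F4}) and Lemma \ref{Lem: Q_2<0} are symmetric under permutations of the vertices, this is not an obstacle.
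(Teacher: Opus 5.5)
Your proposal is correct and follows the paper's own route. The paragraph preceding Lemma \ref{Lem: none zero} derives a negative $h_i$ from $Q\geq 0$, Lemma \ref{Lem: trivial}, Lemma \ref{Lem: Q_2<0} and (\ref{Eq: F4}), and Lemma \ref{Lem: none zero} then forces the remaining three $h_j$ to be positive, exactly as in your two steps.
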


\begin{theorem}\label{Thm: simply connect}
Suppose $\sigma=\{1234\}$ is a hyperbolic tetrahedron of $(M, \mathcal{T})$. 
The admissible space $\Omega_{1234}(\eta)$ of Thurston's hyperbolic sphere packing metrics for a given weight $\eta$ with $\eta(E_\sigma)\in [0,1]^6\setminus \widetilde{S}$ is a simply connected nonempty set.
\end{theorem}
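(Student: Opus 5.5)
We work throughout in the coordinates $\kappa=(\kappa_1,\kappa_2,\kappa_3,\kappa_4)=(\coth r_1,\dots,\coth r_4)\in(1,+\infty)^4$. The map $r\mapsto\kappa$ is a diffeomorphism of $\mathbb{R}^4_{>0}$ onto $(1,+\infty)^4$, so it suffices to prove the statement for the image of $\Omega_{1234}(\eta)$. By Lemma \ref{Lem: non-degenerate} this image is $(1,+\infty)^4\setminus D$, where $D=\{Q\geq 0\}$ is the degenerate set.

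\textbf{Decomposition of $D$.} By Corollary \ref{Cor: one negative three positive}, at every point of $D$ exactly one of $h_1,\dots,h_4$ is negative and no $h_i$ vanishes. Hence $D=D_1\sqcup D_2\sqcup D_3\sqcup D_4$ with $D_i=D\cap\{h_i\le 0\}=D\cap\{h_i<0\}$. Each $D_i$ is closed in $(1,+\infty)^4$, and the $D_i$ are pairwise disjoint. Comparing (\ref{Eq: Q}) with (\ref{Eq: h_i}) gives $\partial Q/\partial\kappa_i=-2h_i$.

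\textbf{Shape of each $D_i$.} Fix $(\kappa_2,\kappa_3,\kappa_4)$ and move $\kappa_1$ upward from a point of $D_1$. While $h_1<0$, $Q$ is strictly increasing, so the point stays in $D$. The function $h_1$ is affine in $\kappa_1$; for it to reach $0$ we would need a point of $D$ with $h_1=0$, which is excluded. So the whole upward ray lies in $D_1$, and this forces $A_1>0$, since otherwise $h_1\to+\infty$ or stays bounded below. With $A_1>0$, Corollary \ref{Cor: positive Delta} gives $\Delta_1>0$. Therefore $D_1=\{\kappa_1\geq f_1(\kappa_2,\kappa_3,\kappa_4)\}$ over the open set $U_1$ of base points whose line meets $D_1$. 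Here $f_1=(-B_1+\sqrt{\Delta_1})/(2A_1)$ is the larger simple root, so $f_1$ is continuous (indeed real analytic) on $U_1$. The same holds for each $D_i$ in the $\kappa_i$ direction.

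\textbf{Nonemptiness.} We exhibit an explicit admissible point, testing the diagonal $\kappa=(t,t,t,t)$. There $Q$ is a quadratic in $t$, and we will show it is negative for suitable $t$, using the fact that at least one $A_i>0$ by Lemma \ref{Lem: trivial}. If this computation is awkward, we instead let one $\kappa_i$ tend to $1$ and use Lemma \ref{Lem: Q_2<0} to control the constant term $Q_2$.

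\textbf{Simple connectivity.} Removing a single $D_i$ from the contractible box leaves the set $\{\kappa_i<f_i\}\cup(\text{lines over the complement of }U_i)$. This set deformation retracts in the $\kappa_i$ direction, by the homotopy pushing $\kappa_i$ toward $1$, onto a region homeomorphic to the base; that region is contractible by the analogous argument one dimension down. The main obstacle is that the four regions $D_i$ are epigraphs in four different directions, so the downward push in $\kappa_1$ might run into $D_2$, $D_3$ or $D_4$.

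To handle this, we would follow the strategy of \cite{X3}. Take a loop $\gamma$ in $\Omega$ and first homotope it to avoid a neighbourhood of each $D_i$ by pushing only in direction $-e_i$ near $D_i$. Then check that $-e_i$ points out of $D_j$ for $j\neq i$: on $\partial D_j$ we have $h_i>0$, hence $\partial Q/\partial\kappa_i<0$, so decreasing $\kappa_i$ decreases $Q$ and keeps us in $\{Q<0\}$. This sign condition, coming from Corollary \ref{Cor: one negative three positive}, is exactly what prevents the pushes from interfering. After all pushes, the loop lies in the region where each $\kappa_i$ lies below the corresponding graph $f_i$ (or over the complement of $U_i$). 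This region is star-shaped in the appropriate order with respect to a point near $(1,1,1,1)$ (if that point is admissible), which makes the loop null-homotopic.
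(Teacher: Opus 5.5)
Your decomposition $D=D_1\sqcup\cdots\sqcup D_4$, the identity $\partial Q/\partial\kappa_i=-2h_i$, and the description of $D_i$ as the region $\kappa_i\ge f_i$ above the larger root all match the paper's proof. The gap is in the simple-connectivity step.

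\textbf{The sign check is reversed.} On $\partial D_j$ you have $h_i>0$ for $i\ne j$, so $\partial Q/\partial\kappa_i=-2h_i<0$. Moving in the direction $-e_i$ therefore \emph{increases} $Q$, i.e.\ it enters $D_j$. Equivalently, implicit differentiation of $Q=0$ gives $\partial f_j/\partial\kappa_i=-h_i/h_j>0$, so lowering $\kappa_i$ lowers the graph bounding $D_j$. Geometrically, enlarging sphere $i$ makes vertex $j$ relatively smaller. So pushing away from $D_i$ does interfere with the other $D_j$. Your concluding claim, that $\{\kappa_i<f_i\ \forall i\}$ is ``star-shaped in the appropriate order'' about a point near $(1,1,1,1)$, is then unsupported; it is essentially the statement to be proved. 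Pushing $\gamma$ itself also accomplishes nothing, since $\gamma\subset\Omega$ already lies below every graph. What has to be controlled is a null-homotopy of $\gamma$.

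\textbf{The missing idea: move points inside the degenerate regions, not inside $\Omega$.}
\begin{itemize}
\item When $A_i>0$, every $\kappa_i$-line meets $D_i$, because $Q>0$ and $h_i<0$ for large $\kappa_i$.
\item $f_i$ is real analytic on the whole base $(1,\infty)^3$, since $\Delta_i>0$ there.
\item $f_i>1$. From the formula in Corollary \ref{Cor: positive Delta} and $-Q_2\ge A_1$ one gets $\Delta_1/4>A_1^2\ge (A_1+B_1/2)^2$ whenever $A_1+B_1/2>0$.
\end{itemize}
Hence $D_i=\{\kappa_i\ge f_i\}$ is a full epigraph inside the box. Now define $\rho$ to be the identity off $\bigcup_i D_i$ and $(\kappa_i,\kappa_{\hat i})\mapsto (f_i(\kappa_{\hat i}),\kappa_{\hat i})$ on $D_i$. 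This map is continuous because the $D_i$ are finitely many pairwise disjoint closed sets; this is exactly where Lemma \ref{Lem: none zero} enters. The straight-line homotopy to $\rho$ moves each point of $D_i$ only in the $e_i$-direction and stays within $D_i$. So the box deformation retracts onto $\Omega\cup\bigcup_i\partial D_i$. That set is a manifold with boundary $\bigcup_i\partial D_i$ and interior $\Omega$, so $\Omega$ is homotopy equivalent to it and hence contractible. This is the paper's argument ($\Omega$ is the complement of mutually disjoint closed regions bounded by analytic graphs over $\mathbb{R}^3_{>0}$). It needs only disjointness, not sign information in the other directions.

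\textbf{Nonemptiness is only promised.} The diagonal does work, but not for the reason you give. At $\kappa=(1,1,1,1)$, $Q=\det(P+\kappa\kappa^T)$ is the determinant of the zero-diagonal symmetric matrix with entries $1+\eta_{ij}$. This equals $p^2+q^2+s^2-2pq-2ps-2qs$ with $p=(1+x)(1+w)$, $q=(1+y)(1+v)$, $s=(1+z)(1+u)$, each in $[1,4]$. Since $\sqrt p,\sqrt q,\sqrt s\in[1,2]$, they satisfy the strict triangle inequality unless $\eta\in\widetilde{S}$, so this value is negative. Hence $Q(t,t,t,t)<0$ for $t$ slightly above $1$. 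Alternatively, nonemptiness follows from connectedness of the box, the disjointness of the closed sets $D_i$, and $f_i>1$.
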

\proof
Suppose $r=(r_1, r_2, r_3, r_4)$ is a degenerate Thurston's hyperbolic sphere packing metric.
Then by Corollary \ref{Cor: one negative three positive},
one of $h_1, h_2, h_3, h_4$ is negative and the others are positive.
Without loss of generality, assume $h_1<0$.
By (\ref{Eq: h_i}),
\begin{equation*}
\begin{aligned}
\kappa_1A_1>&\ \kappa_2[x(1-w^2)+yvw+zuw+yu+zv]\\
    &+\kappa_3[y(1-v^2)+xvw+zuv+xu+zw]\\
    &+\kappa_4[z(1-u^2)+xuw+yuv+xv+yw]\geq 0,
\end{aligned}
\end{equation*}
which implies that
$A_1=u^2+v^2+w^2+2uvw-1>0$.
In this case, take $Q\geq 0$ as a quadratic inequality $A_1\kappa_1^2+B_1\kappa_1+C_1\geq 0$ of $\kappa_1$,
where $B_1, C_1$ are given by (\ref{Eq: ABC}).
By Corollary \ref{Cor: positive Delta}, we have $\Delta_1=B_1^2-4A_1C_1>0$.
Therefore,
\begin{eqnarray*}
\kappa_1\geq\frac{-B_1+\sqrt{\Delta_1}}{2A_1}   \quad \text{or}  \quad
\kappa_1\leq\frac{-B_1-\sqrt{\Delta_1}}{2A_1}.
\end{eqnarray*}
The definition of $h_i$ in (\ref{Eq: h_i}) can imply $h_i=-\frac{1}{2}(2A_i\kappa_i+B_i)$.
Then $h_1<0$ is equivalent to $\kappa_1>-\frac{B_1}{2A_1}$.
Therefore,
$\kappa_1\geq\frac{-B_1+\sqrt{\Delta_1}}{2A_1}$,
which also implies $\kappa_1>\frac{-B_1}{2A_1}$ and $h_1<0$.

In the case where $A_1>0$, set
\begin{eqnarray*}
U_1=\{(r_1, r_2, r_3, r_4)\in \mathbb{R}^4_{>0}\mid \kappa_1\geq \frac{-B_1+\sqrt{\Delta_1}}{2A_1}\},
\end{eqnarray*}
which is a closed subset of $\mathbb{R}^4_{>0}$ bounded by an analytic graph over $\mathbb{R}^3_{>0}$.
Similarly, we define $U_2, U_3, U_4$ whenever the corresponding $A_i>0$.
Note that $U_1,U_2,U_3,U_4$ are mutually disjoint by Lemma \ref{Lem: none zero}, if they are nonempty.

If $A_\mu>0$ for $\mu\in P\subset \{1,2,3,4\}$ and $A_\nu\leq 0$ for $\nu\in \{1,2,3,4\}\setminus P$, then the space of degenerate Thurston's hyperbolic sphere packing metrics
is $\cup_{\mu\in P}U_\mu$.
The corresponding admissible space of Thurston's sphere packing metric is
\begin{equation*}
\begin{aligned}
\Omega_{1234}(\eta)=\mathbb{R}^4_{>0}\setminus\cup_{\mu\in P}U_\mu,
\end{aligned}
\end{equation*}
which is nonempty.
Obviously, $\Omega_{1234}(\eta)$ is homotopy equivalent to $\mathbb{R}^4_{>0}$, because
$U_1, U_2, U_3, U_4$ are mutually disjoint and bounded by analytic graphs on $\mathbb{R}^3_{>0}$.
This implies that, for any fixed weight $\eta$ with $\eta(E_\sigma)\in [0, 1]^6\setminus \widetilde{S}$,
the admissible space of Thurston's hyperbolic sphere packing metrics $\Omega_{1234}(\eta)$ is simply connected.  
\qed

Theorem \ref{Thm: simply connect} gives the following corollary,
which is another representation of Lemma \ref{Lem: trivial}.

\begin{corollary}\label{Cor: AS R4}
Suppose $\sigma=\{1234\}$ is a hyperbolic tetrahedron of $(M, \mathcal{T})$ with the weight $\eta$ on $E_\sigma$ satisfying $\eta(E_\sigma)\in [0,1]^6\setminus \widetilde{S}$.
Then the admissible space $\Omega_{1234}(\eta)=\mathbb{R}^4_{>0}$ if and only if
$A_1\leq 0, A_2\leq 0, A_3\leq 0, A_4\leq 0$.
Specially, if
\begin{equation}\label{Eq: sum of Phi > pi}
\Phi_{ij}+\Phi_{ik}+\Phi_{jk}\geq \pi
\end{equation}
for any face $\{ijk\}\in F_\sigma$, then the admissible space $\Omega_{1234}(\eta)$ of Thurston's hyperbolic sphere packing metrics is $\mathbb{R}^{4}_{>0}$.
\end{corollary}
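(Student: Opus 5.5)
The proof has two parts. First we show that $\Omega_{1234}(\eta)=\mathbb{R}^4_{>0}$ exactly when $A_1,\dots,A_4\le 0$. Then we show that the angle condition (\ref{Eq: sum of Phi > pi}) forces every $A_i\le 0$.

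\emph{Sufficiency.} Suppose $A_i\leq 0$ for all $i$. Lemma \ref{Lem: trivial} gives $Q\leq 0$ for every $(r_1,r_2,r_3,r_4)\in\mathbb{R}^4_{>0}$. Because $\eta(E_\sigma)\notin\widetilde{S}$, the equality case of that lemma is excluded, so $Q<0$ everywhere. Lemma \ref{Lem: non-degenerate} then shows that every $r$ is non-degenerate, hence $\Omega_{1234}(\eta)=\mathbb{R}^4_{>0}$.

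\emph{Necessity.} Suppose, say, $A_1>0$. We must produce a degenerate metric, i.e.\ show $U_1\neq\emptyset$. Fix $\kappa_2,\kappa_3,\kappa_4>1$ arbitrarily. Viewed as a quadratic in $\kappa_1$, $Q=A_1\kappa_1^2+B_1\kappa_1+C_1$ has positive leading coefficient, so $Q\to+\infty$ as $\kappa_1\to+\infty$. Every large $\kappa_1$ is attained, since $\kappa_1=\coth r_1\to+\infty$ as $r_1\to 0^+$. So for $r_1$ small we get $Q>0$, a degenerate metric. Equivalently, Corollary \ref{Cor: positive Delta} gives $\Delta_1>0$, so the root $\frac{-B_1+\sqrt{\Delta_1}}{2A_1}$ is real and $U_1$ contains all sufficiently large $\kappa_1$. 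Hence $\Omega_{1234}(\eta)\neq\mathbb{R}^4_{>0}$. This settles the equivalence, which is essentially a restatement of Lemma \ref{Lem: trivial} combined with the description $\Omega_{1234}(\eta)=\mathbb{R}^4_{>0}\setminus\cup_{\mu\in P}U_\mu$ from the proof of Theorem \ref{Thm: simply connect}.

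\emph{The angle condition.} It remains to show that (\ref{Eq: sum of Phi > pi}) on a face forces the corresponding $A_i\leq 0$; this is the main step. Take the face $\{123\}$ and write $a=\Phi_{12}$, $b=\Phi_{13}$, $c=\Phi_{23}$, all in $[0,\pi/2]$ with $a+b+c\geq\pi$. Then $A_4=\cos^2a+\cos^2b+\cos^2c+2\cos a\cos b\cos c-1$. We use the classical factorization
\[
\cos^2a+\cos^2b+\cos^2c+2\cos a\cos b\cos c-1=4\cos\tfrac{a+b+c}{2}\cos\tfrac{-a+b+c}{2}\cos\tfrac{a-b+c}{2}\cos\tfrac{a+b-c}{2}.
\]
We check this identity directly, e.g.\ by expanding the right-hand side with product-to-sum formulas. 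Since each angle lies in $[0,\pi/2]$, each of $\pm a\pm b\pm c$ with exactly one minus sign lies in $[-\pi/2,\pi]$, so its half lies in $[-\pi/4,\pi/2]$ and the last three cosines are $\geq 0$. The sum $a+b+c$ lies in $[\pi,3\pi/2]$, so its half lies in $[\pi/2,3\pi/4]$ and $\cos\frac{a+b+c}{2}\leq 0$. Thus $A_4\leq 0$. The same argument on the other three faces gives $A_1,A_2,A_3\leq 0$, and the first part yields $\Omega_{1234}(\eta)=\mathbb{R}^4_{>0}$.

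The only genuine obstacle is verifying this trigonometric factorization and tracking the sign ranges. The rest follows immediately from Lemma \ref{Lem: trivial}, Lemma \ref{Lem: non-degenerate} and the structure of $U_\mu$ established in Theorem \ref{Thm: simply connect}.
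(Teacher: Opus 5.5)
Your proof is correct and follows essentially the same route as the paper's: the equivalence comes from Lemma \ref{Lem: trivial} together with the description of the degenerate set in the proof of Theorem \ref{Thm: simply connect}, and the angle condition is handled by the same half-angle factorization of $A_i$ with the same sign analysis. The paper only says the first part "follows from the proof" of that theorem; your observation that $Q\to+\infty$ as $r_1\to0^+$ when $A_1>0$ makes the nonemptiness of $U_1$, and hence the necessity direction, explicit.
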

\proof
The first part of Corollary \ref{Cor: AS R4} follows from the proof of Theorem \ref{Thm: simply connect}.
For the second part, note that
\begin{equation}\label{eq A identity}
\begin{aligned}
A_1
=&\ \cos^2\Phi_{23}+\cos^2\Phi_{24}+\cos^2\Phi_{34}+2\cos\Phi_{23}\cos\Phi_{24}\cos\Phi_{34}-1\\
=&\ (\cos\Phi_{23}+\cos\Phi_{24}\cos\Phi_{34})^2-\sin^2\Phi_{24}\sin^2\Phi_{34}\\
=&\ 4\cos\frac{\Phi_{23}+\Phi_{24}+\Phi_{34}}{2}\cos\frac{\Phi_{24}+\Phi_{34}-\Phi_{23}}{2}
\cos\frac{\Phi_{23}-\Phi_{24}+\Phi_{34}}{2}\cos\frac{\Phi_{23}+\Phi_{24}-\Phi_{34}}{2}.
\end{aligned}
\end{equation}
Since $\Phi_{ij}\in [0, \frac{\pi}{2}]$, we have
\begin{equation*}
\begin{aligned}
\cos\frac{\Phi_{24}+\Phi_{34}-\Phi_{23}}{2}\geq 0,\quad
\cos\frac{\Phi_{23}-\Phi_{24}+\Phi_{34}}{2}\geq 0,\quad
\cos\frac{\Phi_{23}+\Phi_{24}-\Phi_{34}}{2}\geq 0.
\end{aligned}
\end{equation*}
Therefore, if $\Phi_{23}+\Phi_{24}+\Phi_{34}\geq  \pi,$ then
$\cos\frac{\Phi_{23}+\Phi_{24}+\Phi_{34}}{2}\leq 0$ and $A_1\leq 0$.
Similarly, if
$$\Phi_{13}+\Phi_{14}+\Phi_{34}\geq \pi,\quad \Phi_{12}+\Phi_{14}+\Phi_{24}\geq \pi, \quad \Phi_{12}+\Phi_{13}+\Phi_{23}\geq \pi,$$
then $A_2\leq 0, A_3\leq0, A_4\leq0$ respectively. \qed

\begin{remark}
Condition (\ref{Eq: sum of Phi > pi}) in Corollary \ref{Cor: AS R4} includes the case $\Phi_{ij}\in [\frac{\pi}{3}, \frac{\pi}{2}]$.
Furthermore, (\ref{Eq: sum of Phi > pi}) permits some intersection angles to lie in $(0, \frac{\pi}{3})$.
Nevertheless, the configurations illustrated in Figure \ref{Figure: four circles} must be excluded.
Similar phenomena arise for circle packings with obtuse intersection angles \cite{Z}
and inversive distance circle packings with negative inversive distance \cite{X2} on surfaces.
\end{remark}

\begin{remark}
The simply connectedness of the admissible space of tangential sphere packing metrics for a single tetrahedron was first proved by Cooper-Rivin \cite{CR}.
Xu \cite{X1} gave a different proof of the simply connectedness with an explicit description of the boundary of the admissible space.
Ge-Hua \cite{GH} gave some new viewpoints on the method in \cite{X1} for tangential sphere packings.
The proof of Theorem \ref{Thm: simply connect} for Thurston's hyperbolic sphere packing metrics unifies the proofs for simply connectedness in \cite{GH,X1}.
This method has been applied to give a new proof of the Bowers-Stephenson's conjecture for inversive distance circle packings on triangulated surfaces \cite{X3}, 
which simplifies the proofs of the Bowers-Stephenson's conjecture in \cite{Guo,L2,X2}.
\end{remark}

For any weight $\eta$ with $\eta(E_\sigma)\in W=[0,1]^6\setminus \widetilde{S}$, 
by Theorem \ref{Thm: simply connect},
the admissible space $\Omega_{1234}(\eta)$ is
a 4-dimensional simply connected nonempty open subset of $\mathbb{R}^4_{>0}$. 
Using $\Omega_{1234}(\eta)$, we can further define the following 10-dimensional. Set
\begin{equation*}
\begin{aligned}
\Omega_{1234}=\cup_{\eta\in W}\, \Omega_{1234}(\eta)=\{(\eta, r)\in W\times \mathbb{R}^4_{>0}\mid Q(\eta, r)<0\},
\end{aligned}
\end{equation*}
where we take $Q$ as a function of $\eta$ and $r$.
To simplify the notations, here we use $\eta\in W$ to denote $\eta(E_\sigma)\in W$.

\begin{corollary}\label{Cor: connectness of 10-dim admissible space}
The set $\Omega_{1234}$ is connected.
\end{corollary}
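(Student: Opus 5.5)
The plan is to show that $\Omega_{1234}$ is path connected by exhibiting, for every $\eta\in W$, a point of $\Omega_{1234}(\eta)$ that depends continuously on $\eta$ and lies in a common ``safe region''. Each fiber $\Omega_{1234}(\eta)$ is nonempty and connected by Theorem \ref{Thm: simply connect}. Given two points $(\eta,r)$ and $(\eta',r')$, I would first move $r$ inside its fiber to a canonical point $(\eta,r^*)$, then move $\eta$ to $\eta'$ while keeping the radius $r^*$, and finally move within the fiber of $\eta'$ to $r'$. The only thing to check is that, for a suitable choice of $r^*$, we have $Q(\eta,r^*)<0$ for all $\eta$ in some connected subset of $W$ that contains both $\eta$ and $\eta'$.

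The natural choice is to take all radii equal and large, $r_i=t\to+\infty$, so that $\kappa_i\to 1^+$. By (\ref{Eq: F4}), as $\kappa_i\to1$ the quantity $Q$ tends to
\[
Q|_{\kappa=1}=Q_2-\sum_i h_i|_{\kappa=1}.
\]
Evaluating this directly, or by the degeneration $\cosh l_{ij}\approx \tfrac12 e^{2t}(1+\cos\Phi_{ij})$, the leading behaviour of $\det G$ is governed by the Cayley--Menger-type determinant of the matrix $(1+\eta_{ij})$ with zero diagonal, rescaled. I expect $Q|_{\kappa=1}$ to equal $-\det$ of a $4\times4$ matrix built from $1+\eta_{ij}$. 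The task is then to show this limit is strictly negative on $W$. Equivalently, the points with very large equal radii give a non-degenerate tetrahedron, because in that regime the tetrahedron is close to an ideal-like configuration whose shape is determined by $\eta$.

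If this limiting negativity holds on all of $W$ except a small set, then for each $\eta$ there is $t(\eta)$ with $(\eta,(t,t,t,t))\in\Omega_{1234}$ for all $t\ge t(\eta)$. By continuity and openness this threshold is locally bounded. Hence the set $\{(\eta,(t,\dots,t)) : t\ge t(\eta)\}$ is connected, as long as $W$ is path connected. $W$ is $[0,1]^6$ minus three points, so it is path connected. Combined with the connectedness of each fiber, this gives the connectedness of $\Omega_{1234}$. Compactness cannot be used globally because $W$ is not closed. Instead I would connect any two points by a path in $W$, which is compact, and choose $t$ uniformly large along it using continuity of $Q$ in $(\eta,\kappa)$ at $\kappa=(1,1,1,1)$.

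The main obstacle is the sign of the limit $Q(\eta,\kappa)$ as $\kappa\to(1,1,1,1)$. It may vanish on a larger set than $\widetilde S$, for example when some $\eta_{ij}=1$ (tangent-type or coincident configurations). If it does, I would first try a different canonical point, such as unequal radii with one $\kappa_i$ large. An alternative fallback avoids this issue. Consider the set $\{\eta : A_i(\eta)\le0\ \forall i\}$. By Corollary \ref{Cor: AS R4}, its fiber is all of $\mathbb{R}^4_{>0}$, and the set is connected; it contains for instance $\eta=0$. For a general $\eta$, I would deform toward this set, for example along $s\mapsto s\eta$ with $s$ decreasing to $0$, and use the explicit structure from the proof of Theorem \ref{Thm: simply connect}. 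There the degenerate region is $\cup U_\mu$, defined by $\kappa_\mu$ exceeding a root. Along such a path, points with all $\kappa_i$ close to $1$, i.e. small $\kappa_\mu$, stay outside every $U_\mu$ as long as the root $\frac{-B_\mu+\sqrt{\Delta_\mu}}{2A_\mu}$ stays bounded below by a constant greater than $1$. Verifying that lower bound uniformly along the path is the step I expect to require the real computation.
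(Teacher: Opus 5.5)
Your strategy is sound: use connectedness of each fiber (Theorem \ref{Thm: simply connect}) plus a common base point $r^*$ lying in every fiber along a compact path in $W$. As written, though, the proof is incomplete. The step that carries all the content is strict negativity of $\lim_{\kappa\to(1,1,1,1)}Q$ on $W$. You leave it as an expectation with contingency plans, and your guessed formula has the wrong sign.

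The step is true and short. Since $\det G=\prod_i s_i^2\,Q$ and $G=SMS$, you have $Q=\det M=\det(P+\kappa\kappa^T)$. At $\kappa=(1,1,1,1)$ this equals $\det N$, where $N$ has zero diagonal and $N_{ij}=c_{ij}:=1+\eta_{ij}\in[1,2]$. So the limit is $+\det N$, not $-\det N$; also $\cosh l_{ij}\approx\frac14 e^{2t}(1+\eta_{ij})$, not $\frac12 e^{2t}(1+\eta_{ij})$. Expand over derangements and set $p=c_{12}c_{34}$, $q=c_{13}c_{24}$, $s=c_{14}c_{23}$:
\[
\det N=p^2+q^2+s^2-2pq-2qs-2sp=-(\sqrt p+\sqrt q+\sqrt s)(-\sqrt p+\sqrt q+\sqrt s)(\sqrt p-\sqrt q+\sqrt s)(\sqrt p+\sqrt q-\sqrt s).
\]
Because $\sqrt p,\sqrt q,\sqrt s\in[1,2]$, every triangle inequality holds. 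One becomes an equality only when one of these numbers equals $2$ and the other two equal $1$. That happens exactly at the three points of $\widetilde{S}$; for instance $p=4$, $q=s=1$ is $(x,y,z,u,v,w)=(1,0,0,0,0,1)$. So the limit is strictly negative on all of $W$, and your worry about some $\eta_{ij}=1$ is unfounded.

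To finish, let $K\subset W$ be a compact path image. Uniform continuity of the polynomial $Q$ on $K\times[1,2]^4$ gives $\epsilon>0$ with $Q<0$ on $K\times(1,1+\epsilon]^4$. Your path construction then goes through, and the fallback via $s\mapsto s\eta$ is unnecessary.

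The detour itself is avoidable, and this is where you differ from the paper. The paper uses no canonical point. Given $\eta_0\in W$ and any $r_0\in\Omega_{1234}(\eta_0)$, openness of $\{Q<0\}$ gives a connected neighborhood $U_{\eta_0}$ of $\eta_0$ in $W$ with $r_0\in\Omega_{1234}(\eta)$ for all $\eta\in U_{\eta_0}$. Hence all fibers over $U_{\eta_0}$ lie in one component of $\Omega_{1234}$. Chaining finitely many such neighborhoods along a path in $W$ finishes the proof.

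Your remark that ``by continuity and openness this threshold is locally bounded'' is already this argument, if you apply it to an arbitrary base point instead of equal large radii. The paper's soft argument needs no computation. Yours, once the determinant is evaluated, gives more: an explicit region (all radii large) contained in every fiber over a compact subset of $W$. It also gives an independent proof that each fiber $\Omega_{1234}(\eta)$ is nonempty, which the paper only asserts in passing.
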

\proof
Suppose $\eta_0\in W$.
By the nonemptiness of $\Omega_{1234}(\eta_0)$, there exists $r_0\in \Omega_{1234}(\eta_0)$ such that $Q(\eta_0, r_0)<0$.
Consider the continuous function $Q(\eta, r_0)$ in $\eta$.
Since $Q(\eta_0, r_0)<0$, there exists a connected neighborhood $U_{\eta_0}\subset W$ of $\eta_0$ satisfying $Q(\eta, r_0)<0$ for all $\eta\in U_{\eta_0}$.
By Theorem \ref{Thm: simply connect}, this implies that for every $\eta\in U_{\eta_0}$, any two points $(\eta, r_A)\in\Omega_{1234}$ and $(\eta_0, r_B)\in\Omega_{1234}$ can be joined by a path in $\Omega_{1234}$.
In this sense, the fibers $\Omega_{1234}(\eta)$ for $\eta\in U_{\eta_0}$ and $\Omega_{1234}(\eta_0)$ are path-connected within $U_{\eta_0}$.
Treating each fiber $\Omega_{1234}(\eta)$ as a single point,
we deduce that for any $\eta_A, \eta_B\in W=[0,1]^6\setminus \widetilde{S}$, there exists a path from $\Omega_{1234}(\eta_A)$ to $\Omega_{1234}(\eta_B)$ in $W$.
This follows from the connectedness of $W$ and the finite covering theorem, and it implies that $\Omega_{1234}$ is connected.
\qed

\section{Variations for tetrahedra generated by Thurston's sphere packings}\label{section 3}

\subsection{Variations of dihedral angles}

Let $\sigma=\{1234\}$ be the vertex set of a tetrahedron.
For $i\in\sigma$, set
\begin{equation*}
s_i = \sinh r_i,\quad 
\kappa_i = \coth r_i > 1,\quad 
\eta_{ij} = \cos\Phi_{ij},\quad 
\kappa = (\kappa_1,\dots,\kappa_4)^T.
\end{equation*}
Then (\ref{Eq: Thurston hyperbolic length}) is rewritten as
\begin{equation}\label{eq:length-law}
 \cosh l_{ij}
 =\cosh r_i\cosh r_j+\eta_{ij}\sinh r_i\sinh r_j
 =s_is_j(\kappa_i\kappa_j+\eta_{ij}).
\end{equation}
Define the symmetric matrices
\begin{equation*}
P=\left(
   \begin{array}{cccc}
    -1 & \eta_{ij} & \eta_{ik} & \eta_{il} \\
     \eta_{ij} & -1 & \eta_{jk} & \eta_{jl} \\
     \eta_{ik} & \eta_{jk} & -1 & \eta_{kl} \\
     \eta_{il} & \eta_{jl} & \eta_{kl} &-1
    \end{array}
 \right),\quad 
M = P + \kappa\kappa^T.
\end{equation*}
Set $S=\diag(s_1,s_2,s_3,s_4)$.
The matrix $G$ in (\ref{Eq: G}) is rewritten as 
\begin{equation}\label{th-congruence-G}
G = S MS.
\end{equation}
By Proposition \ref{Prop: non-degenerate condition},
for a non-degenerate hyperbolic tetrahedron, 
all $3\times 3$ principal minors are positive and the $4\times 4$ determinant is negative. 
Therefore,
\begin{equation}\label{Eq: F26}
\det M=\frac{\det G}{\prod_i s_i^2}<0,
\qquad
\det M_{\widehat k}
=\frac{\det G_{\widehat k}}{\prod_{i\ne k}s_i^2}>0.
\end{equation}
Here $\widehat{k}$ denotes deletion of the $k$-th index, and submatrices and vectors retain the original vertex labeling.
Let $M_{ij}$ denote the submatrix of the square matrix $M$ formed by the $(i, i), (i, j), (j, i)$ and $(j, j)$ entries.

Define
\begin{equation}\label{th-inverse}
H=M^{-1},\qquad h=H\kappa,\qquad q=-\det M>0.
\end{equation}
By (\ref{Eq: F26}), the adjugate formula gives
\begin{equation*}
H_{kk}=\frac{\det M_{\widehat k}}{\det M}<0.
\end{equation*}
For each edge, set
\begin{equation}\label{th-E}
\begin{aligned}
E_{ij}:=-\det M_{\{i,j\}}
&=(\kappa_i\kappa_j+\eta_{ij})^2
 -(\kappa_i^2-1)(\kappa_j^2-1)\\
&=\kappa_i^2+\kappa_j^2+2\eta_{ij}\kappa_i\kappa_j
  +\eta_{ij}^2-1 \\
&=\frac{\sinh^2l_{ij}}{s_i^2s_j^2}>0,
\end{aligned}
\end{equation}
where (\ref{eq:length-law}) and the fact $s_i^2(\kappa_i^2-1)=1$ are used in the third line.
For distinct $i,j,k,l$, define
\begin{equation}\label{th-b-definition}
F^{(k)}=M_{\{i,j,l\}}^{-1}=M_{\widehat k}^{-1},\quad
b_l^{(k)}=(F^{(k)}\kappa_{\widehat k})_l=(M_{\widehat k}^{-1}\kappa_{\widehat k})_l.
\end{equation}

\begin{lemma}
\begin{equation}\label{th-b-block}
b_l^{(k)}=h_l-\frac{H_{lk}}{H_{kk}}h_k.
\end{equation}
\end{lemma}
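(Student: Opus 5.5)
This identity is a Schur-complement statement about the inverse of a principal submatrix, so I will prove it by block-inverse linear algebra. Fix the index $k$ and write $M$ in block form with respect to the splitting $\{1,2,3,4\}=\widehat k\cup\{k\}$:
\begin{equation*}
M=\begin{pmatrix} M_{\widehat k} & m \\ m^T & M_{kk}\end{pmatrix},\qquad
H=M^{-1}=\begin{pmatrix} H_{\widehat k\widehat k} & H_{\widehat k k} \\ H_{k\widehat k} & H_{kk}\end{pmatrix}.
\end{equation*}
By (\ref{Eq: F26}) both $M$ and $M_{\widehat k}$ are invertible, and $H_{kk}=\det M_{\widehat k}/\det M\neq 0$. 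So all the quantities appearing in (\ref{th-b-block}) are well defined.

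The first key step is the standard identity for the inverse of a principal block in terms of the full inverse:
\begin{equation*}
M_{\widehat k}^{-1}=H_{\widehat k\widehat k}-\frac{1}{H_{kk}}H_{\widehat k k}H_{k\widehat k}.
\end{equation*}
I will verify it directly from $HM=I$. The $(\widehat k,\widehat k)$ block gives $H_{\widehat k\widehat k}M_{\widehat k}+H_{\widehat k k}m^T=I$. The $(k,\widehat k)$ block gives $H_{k\widehat k}M_{\widehat k}+H_{kk}m^T=0$, so $m^T=-H_{kk}^{-1}H_{k\widehat k}M_{\widehat k}$. Substituting the second relation into the first yields
\begin{equation*}
\Bigl(H_{\widehat k\widehat k}-H_{kk}^{-1}H_{\widehat k k}H_{k\widehat k}\Bigr)M_{\widehat k}=I,
\end{equation*}
and right-multiplying by $M_{\widehat k}^{-1}$ gives the claimed formula.

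The second step is to apply this to $\kappa_{\widehat k}$ and read off the $l$-th component for $l\neq k$:
\begin{equation*}
b^{(k)}_l=\sum_{m\neq k}H_{lm}\kappa_m-\frac{H_{lk}}{H_{kk}}\sum_{m\neq k}H_{km}\kappa_m .
\end{equation*}
Now $h=H\kappa$ gives $\sum_{m\neq k}H_{lm}\kappa_m=h_l-H_{lk}\kappa_k$ and $\sum_{m\neq k}H_{km}\kappa_m=h_k-H_{kk}\kappa_k$. Substituting these, the two $\kappa_k$ terms are $-H_{lk}\kappa_k$ and $+\frac{H_{lk}}{H_{kk}}H_{kk}\kappa_k$, which cancel. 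This leaves $b^{(k)}_l=h_l-\frac{H_{lk}}{H_{kk}}h_k$, which is exactly (\ref{th-b-block}).

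There is no real obstacle here. The only points needing care are the invertibility of $M_{\widehat k}$ and the nonvanishing of $H_{kk}$, both supplied by the non-degeneracy condition (\ref{Eq: F26}), and keeping the original vertex labels consistent when extracting blocks. The symmetry of $M$ and $H$ is not even needed.
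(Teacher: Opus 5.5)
Your proof is correct and is essentially the paper's block-inverse (Schur complement) argument. The paper restricts $Mh=\kappa$ to the $\widehat k$ rows, multiplies by $F^{(k)}=M_{\widehat k}^{-1}$, and uses the off-diagonal relation $H_{\widehat k,k}=-F^{(k)}M_{\widehat k,k}H_{kk}$. You instead first derive $M_{\widehat k}^{-1}=H_{\widehat k\widehat k}-H_{\widehat k k}H_{k\widehat k}/H_{kk}$ from $HM=I$ (the same formula the paper obtains later, in the proof of the $B_{ij}$ lemma), then contract with $\kappa$ and cancel the $\kappa_k$ terms.
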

\proof
Set
\[
M=\begin{pmatrix}
M_{\widehat k}&M_{\widehat k,k}\\
M_{k,\widehat k}&M_{kk}
\end{pmatrix}.
\]
By (\ref{th-inverse}), $Mh=\kappa$ gives
\begin{equation}\label{Eq: F27}
M_{\widehat k}h_{\widehat k}+M_{\widehat k,k}h_k=\kappa_{\widehat k}.
\end{equation}
The block elimination yields
\[
\begin{pmatrix}I&0\\-M_{k,\widehat k}F^{(k)} &1\end{pmatrix}M
=\begin{pmatrix}
M_{\widehat k}&M_{\widehat k,k}\\
0&M_{kk}-M_{k,\widehat k}F^{(k)} M_{\widehat k,k}
\end{pmatrix}.
\]
It follows that
\[
\det M=\det M_{\widehat k}
\bigl(M_{kk}-M_{k,\widehat k}F^{(k)} M_{\widehat k,k}\bigr).
\]
Note that both determinants are nonzero.
%, and hence the Schur complement is nonzero. 
Inverting the triangular factors gives
\begin{equation}\label{Eq: F24}
H
=\begin{pmatrix}
H_{\widehat k}&H_{\widehat k,k}\\
H_{k,\widehat k}&H_{kk}
\end{pmatrix}
=\begin{pmatrix}
F^{(k)}+F^{(k)} M_{\widehat k,k}H_{kk}M_{k,\widehat k}F^{(k)}
&-F^{(k)} M_{\widehat k,k}H_{kk}\\
-H_{kk}M_{k,\widehat k}F^{(k)}&(M_{kk}-M_{k,\widehat k}F^{(k)} M_{\widehat k,k})^{-1}
\end{pmatrix}.
\end{equation}
Using $H_{\widehat k,k}=-F^{(k)} M_{\widehat k,k}H_{kk}$ and (\ref{th-b-definition}), and 
multiplying both sides of (\ref{Eq: F27}) by $F^{(k)}$, we obtain
\begin{equation*}
F^{(k)}\kappa_{\widehat k}
=h_{\widehat k}+F^{(k)} M_{\widehat k,k}h_k
=h_{\widehat k}-\frac{H_{\widehat k,k}}{H_{kk}}h_k.
\end{equation*}
Taking the $l$-th component proves (\ref{th-b-block}).
\qed

Set $R=\diag(s_1^2,s_2^2,s_3^2,s_4^2)$.
Take an arbitrary vector $v\in\mathbb R^4$ and normalize the radius variation by
\begin{equation}\label{Eq: F31}
 \delta r=Rv, \qquad \delta r_i=s_i^2v_i.
\end{equation}
The quotient rule gives
\[
\frac{d\kappa_i}{dr_i}
=\frac{\sinh^2r_i-\cosh^2r_i}{\sinh^2r_i}
=-s_i^{-2},
\qquad
\delta\kappa_i=-s_i^{-2}\delta r_i=-v_i.
\]
Thus $\delta\kappa=-v$. 

We have the following basic variational formula for the dihedral angles in a hyperbolic tetrahedron generated by Thurston's hyperbolic sphere packings.
\begin{lemma}
Let $\beta_{ij}:=\beta_{ij,kh}$ be the dihedral angle along the edge $\{ij\}\in E$ in a hyperbolic tetrahedron $\{ijkh\}$ generated by Thurston's hyperbolic sphere packings.
Then
\begin{equation}\label{th-dbeta}
\delta\beta_{ij}
=-\frac{\sqrt q}{\sqrt{E_{ij}}}
\left[b_l^{(k)}(Hv)_k+b_k^{(l)}(Hv)_l\right].
\end{equation}
\end{lemma}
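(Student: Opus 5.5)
Our plan is to use the Gram-matrix description of dihedral angles and to reduce the variation of $\beta_{ij}$ to the variation of the matrix $M$, which depends on $r$ only through $\kappa$. For a hyperbolic tetrahedron with Gram matrix $G$ (of vertices as points on the hyperboloid), the dihedral angle along $\{ij\}$ satisfies the standard formula
\[
\cos\beta_{ij}=\frac{-\adj(G)_{kl}}{\sqrt{\adj(G)_{kk}\adj(G)_{ll}}},
\]
up to the sign convention. Moreover $\sin\beta_{ij}$ has the closed form $\sqrt{-\det G}\,\sinh l_{ij}/\sqrt{\adj(G)_{kk}\adj(G)_{ll}}$, which follows from the Jacobi identity $\adj(G)_{kk}\adj(G)_{ll}-\adj(G)_{kl}^2=\det G\cdot \det G_{\{i,j\}}$. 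Since $G=SMS$ by (\ref{th-congruence-G}), the scaling factors $s_i$ cancel in $\cos\beta_{ij}$. Hence
\[
\cos\beta_{ij}=\frac{-H_{kl}}{\sqrt{H_{kk}H_{ll}}},\qquad
\sin\beta_{ij}=\frac{\sqrt{q}\sqrt{E_{ij}}}{\det M\sqrt{H_{kk}H_{ll}}}
\]
(with signs fixed using $H_{kk},H_{ll}<0$ and $\det M<0$). So $\beta_{ij}$ is a function of $H$ alone.

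Next we vary. Since $M=P+\kappa\kappa^T$ and $\delta\kappa=-v$ by (\ref{Eq: F31}), we get $\delta M=-(v\kappa^T+\kappa v^T)$. Therefore
\[
\delta H=-H\,\delta M\,H=Hv\,h^T+h\,(Hv)^T,
\]
using $h=H\kappa$. In particular $\delta H_{ab}=(Hv)_ah_b+h_a(Hv)_b$. We then differentiate $\cos\beta_{ij}$:
\[
-\sin\beta_{ij}\,\delta\beta_{ij}=-\frac{\delta H_{kl}}{\sqrt{H_{kk}H_{ll}}}+\frac{H_{kl}}{2\sqrt{H_{kk}H_{ll}}}\Big(\frac{\delta H_{kk}}{H_{kk}}+\frac{\delta H_{ll}}{H_{ll}}\Big).
\]
Substituting $\delta H$ and collecting coefficients of $(Hv)_k$ and $(Hv)_l$ gives coefficient $h_l-\frac{H_{lk}}{H_{kk}}h_k$ for $(Hv)_k$. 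By (\ref{th-b-block}) this is exactly $b^{(k)}_l$, and symmetrically the coefficient of $(Hv)_l$ is $b^{(l)}_k$. Note that no other components of $Hv$ appear, which is what makes the final formula so clean.

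The remaining step, which we expect to be the main obstacle, is the prefactor. We must divide by $\sin\beta_{ij}$ and show that the leftover factor $1/\sqrt{H_{kk}H_{ll}}$ combines with the expression for $\sin\beta_{ij}$ to give exactly $\sqrt q/\sqrt{E_{ij}}$, with the overall minus sign. The plan is to apply the Jacobi complementary-minor identity directly to $M$:
\[
H_{kk}H_{ll}-H_{kl}^2=\frac{\det M_{\{i,j\}}}{\det M}=\frac{E_{ij}}{q}.
\]
This gives $\sin\beta_{ij}=\sqrt{E_{ij}}/(\sqrt q\sqrt{H_{kk}H_{ll}})$. The $\sqrt{H_{kk}H_{ll}}$ factors then cancel, leaving $\delta\beta_{ij}=-\frac{\sqrt q}{\sqrt{E_{ij}}}[\,b_l^{(k)}(Hv)_k+b_k^{(l)}(Hv)_l\,]$.

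Care is needed with orientation and sign conventions. Specifically, we must check that $\beta_{ij}\in(0,\pi)$ corresponds to the positive root, and that the sign of $\cos\beta_{ij}$ is consistent with $H_{kk},H_{ll}<0$. We would verify this on a nearly regular tetrahedron, which fixes the overall minus sign.
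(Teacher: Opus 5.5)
Your route is the same as the paper's: express $\cos\beta_{ij}$ through $H$ using $G=SMS$, compute $\delta H=(Hv)h^T+h(Hv)^T$ from $\delta M=-v\kappa^T-\kappa v^T$, identify the coefficients of $(Hv)_k,(Hv)_l$ with $b_l^{(k)},b_k^{(l)}$ via the block formula, and evaluate $\sin\beta_{ij}$ with the Jacobi identity $H_{kk}H_{ll}-H_{kl}^2=E_{ij}/q$. There is, however, a sign error exactly where you suspected one, and as written your steps prove the opposite sign.

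Your adjugate formula $\cos\beta_{ij}=-\adj(G)_{kl}/\sqrt{\adj(G)_{kk}\adj(G)_{ll}}$ is correct. When you pass to $H$, though, it is not enough that the $s_i$ cancel. We have $\adj(G)=(\det G)\,S^{-1}HS^{-1}$ with $\det G<0$. The numerator therefore picks up $\det G$ while the denominator picks up $|\det G|$, and this absorbs the minus sign. The correct relation is $\cos\beta_{ij}=H_{kl}/\sqrt{H_{kk}H_{ll}}$, as in the paper. As a check, for a regular tetrahedron with $c=\cosh l$, $G^{-1}$ has positive off-diagonal and negative diagonal entries, and this formula gives the acute angle $\cos\beta=c/(1+2c)$. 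Your version would give an obtuse angle.

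With your sign, your own derivative line yields $\sin\beta_{ij}\,\delta\beta_{ij}=+\bigl[b_l^{(k)}(Hv)_k+b_k^{(l)}(Hv)_l\bigr]/\sqrt{H_{kk}H_{ll}}$. Hence $\delta\beta_{ij}=+\frac{\sqrt q}{\sqrt{E_{ij}}}\bigl[\cdots\bigr]$, and the minus sign in your final line does not follow from what precedes it. The same slip appears in your first expression for $\sin\beta_{ij}$. It has $\det M<0$ in the denominator and is therefore negative. It should have $q=-\det M$ there, which then agrees with the value $\sqrt{E_{ij}}/(\sqrt q\,\sqrt{H_{kk}H_{ll}})$ you use later.

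This is not cosmetic. The sign of $\delta\beta_{ij}$ is exactly what later gives $C_{ij}<0$ and hence the definiteness of $\Lambda_\sigma$. The repair is immediate: starting from $\cos\beta_{ij}=H_{kl}/\sqrt{H_{kk}H_{ll}}$ one gets
\[
-\sin\beta_{ij}\,\delta\beta_{ij}=\frac{1}{\sqrt{H_{kk}H_{ll}}}\Bigl[\delta H_{kl}-\frac{H_{kl}}{2}\Bigl(\frac{\delta H_{kk}}{H_{kk}}+\frac{\delta H_{ll}}{H_{ll}}\Bigr)\Bigr],
\]
and the rest of your argument then gives the stated formula verbatim. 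Your proposed sanity check would also catch the error. Take a regular configuration and $v$ a positive multiple of $(1,1,1,1)^T$. All radii then increase, so all edges lengthen and the dihedral angles must decrease. Both $Hv$ and the numbers $b_l^{(k)}$ are positive here, so only the minus sign is geometrically consistent.
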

\proof
Let $\{k,l\}=\sigma\setminus\{i,j\}$. 
From \eqref{th-congruence-G}, $G^{-1}=S^{-1}HS^{-1}$.
It is also well known, see, e.g., \cite{MU}, that 
\begin{equation}\label{th-cos-beta}
\cos\beta_{ij}
=\frac{(G^{-1})_{kl}}{\sqrt{(G^{-1})_{kk}(G^{-1})_{ll}}}
=\frac{H_{kl}/(s_ks_l)}
 {\sqrt{(H_{kk}/s_k^2)(H_{ll}/s_l^2)}}
=\frac{H_{kl}}{\sqrt{H_{kk}H_{ll}}}.
\end{equation}
Differentiating \eqref{th-cos-beta} yields 
\begin{equation}\label{Eq: variation of dihedral angle 1}
\begin{aligned}
-\sin\beta_{ij}\,\delta\beta_{ij}
&=\frac{\delta H_{kl}}{\sqrt{H_{kk}H_{ll}}}
+H_{kl}\,\delta\bigl((H_{kk}H_{ll})^{-\frac{1}{2}}\bigr)\\
&=\frac{\delta H_{kl}}{\sqrt{H_{kk}H_{ll}}}
-\frac{H_{kl}}{2(H_{kk}H_{ll})^{\frac{3}{2}}}
\bigl(H_{ll}\delta H_{kk}+H_{kk}\delta H_{ll}\bigr)\\
&=\frac{1}{\sqrt{H_{kk}H_{ll}}}
\left[\delta H_{kl}-\frac{H_{kl}}2
\left(\frac{\delta H_{kk}}{H_{kk}}+\frac{\delta H_{ll}}{H_{ll}}\right)\right].
\end{aligned}
\end{equation}
Because $0<\beta_{ij}<\pi$, by (\ref{th-cos-beta}), we have 
\begin{equation}\label{Eq: sin beta}
\sin\beta_{ij}\sqrt{H_{kk}H_{ll}}
=(1-\cos^2\beta_{ij})\sqrt{H_{kk}H_{ll}}
=\sqrt{H_{kk}H_{ll}-H_{kl}^2}.
\end{equation}
Applying the Jacobi principal-minor identity, (\ref{th-inverse}) and (\ref{th-E}) gives
\begin{equation}\label{th-jacobi}
H_{kk}H_{ll}-H_{kl}^2
=\det H_{\{k,l\}}
=\frac{\det M_{\{i,j\}}}{\det M}
=\frac{E_{ij}}{q}>0.
\end{equation}
Substituting \eqref{Eq: sin beta} and \eqref{th-jacobi} into \eqref{Eq: variation of dihedral angle 1} gives
\begin{equation}\label{Eq: F33}
\delta\beta_{ij}
=-\frac{\sqrt q}{\sqrt{E_{ij}}}
\left[\delta H_{kl}-\frac{H_{kl}}2
\left(\frac{\delta H_{kk}}{H_{kk}}+\frac{\delta H_{ll}}{H_{ll}}\right)\right].
\end{equation}

Since $P$ is fixed in this variation, $\delta P=0$ and
\begin{equation}\label{Eq: F28}
\delta M
=\delta(P+\kappa\kappa^T)
=(\delta\kappa)\kappa^T+\kappa(\delta\kappa)^T
=-v\kappa^T-\kappa v^T.
\end{equation}
In components, this says
$\delta M_{ab}=-v_a\kappa_b-\kappa_av_b$ for all $a,b$,
including $\delta M_{aa}=-2\kappa_av_a$ on the diagonal.
Differentiating $MH=I$ gives 
\begin{equation}\label{Eq: F29}
0=\delta(MH)=(\delta M)H+M(\delta H).
\end{equation}
Left-multiplying both sides of (\ref{Eq: F29}) by $H$ and applying \eqref{Eq: F28} yields
\begin{equation*}
\delta H
=-H(\delta M)H
=Hv\kappa^TH+H\kappa v^TH
=(Hv)h^T+h(Hv)^T,
\end{equation*}
where $HM=I$ and $H\kappa=h$ are used. 
The corresponding entrywise computation is
\begin{align*}
\delta H_{ab}
&=-\sum_{p,t}H_{ap}(\delta M_{pt})H_{tb}\\
&=\sum_{p,t}H_{ap}(v_p\kappa_t+\kappa_pv_t)H_{tb}\\
&=\left(\sum_pH_{ap}v_p\right)\left(\sum_t\kappa_tH_{tb}\right)
 +\left(\sum_pH_{ap}\kappa_p\right)\left(\sum_tv_tH_{tb}\right)\\
&=(Hv)_ah_b+h_a(Hv)_b.
\end{align*}
In particular,
\begin{equation}\label{th-dH-components}
\delta H_{kl}=(Hv)_kh_l+h_k(Hv)_l,\quad
\delta H_{kk}=2h_k(Hv)_k,\quad
\delta H_{ll}=2h_l(Hv)_l.
\end{equation}
Substituting \eqref{th-dH-components} into \eqref{Eq: F33} gives
\begin{align*}
\delta\beta_{ij}
%&=-\frac{\sqrt q}{\sqrt{E_{ij}}}
%\left[\delta H_{kl}-\frac{H_{kl}}2
%\left(\frac{\delta H_{kk}}{H_{kk}}+\frac{\delta H_{ll}}{H_{ll}}\right)\right]\\
&=-\frac{\sqrt q}{\sqrt{E_{ij}}}
\left[(Hv)_kh_l+h_k(Hv)_l
-\frac{H_{kl}}2\left(\frac{2h_k(Hv)_k}{H_{kk}}
                    +\frac{2h_l(Hv)_l}{H_{ll}}\right)\right]\\
&=-\frac{\sqrt q}{\sqrt{E_{ij}}}
\left[\left(h_l-\frac{H_{lk}}{H_{kk}}h_k\right)(Hv)_k
+\left(h_k-\frac{H_{kl}}{H_{ll}}h_l\right)(Hv)_l\right].
\end{align*}
Using \eqref{th-b-block}, we obtain (\ref{th-dbeta}). 
\qed

\begin{remark}
The formula \eqref{th-dbeta} also gives the ordinary radius
derivatives. Indeed, $v_m=s_m^{-2}\delta r_m
=(\kappa_m^2-1)\delta r_m$, so
\begin{align*}
\delta\beta_{ij}
&=-\frac{\sqrt q}{\sqrt{E_{ij}}}
\sum_m\bigl(b_l^{(k)}H_{km}+b_k^{(l)}H_{lm}\bigr)v_m\\
&=-\frac{\sqrt q}{\sqrt{E_{ij}}}
\sum_m(\kappa_m^2-1)
\bigl(b_l^{(k)}H_{km}+b_k^{(l)}H_{lm}\bigr)\delta r_m.
\end{align*}
Thus, for every $m\in\sigma$,
\begin{equation*}
\frac{\partial\beta_{ij}}{\partial r_m}
=-\frac{\sqrt q(\kappa_m^2-1)}{\sqrt{E_{ij}}}
\bigl(b_l^{(k)}H_{km}+b_k^{(l)}H_{lm}\bigr).
\end{equation*}
\end{remark}

\subsection{Useful identities for the variation of dihedral angles}

Set
\begin{equation}\label{Eq: F30}
a_{ij}=\frac{\kappa_j+\eta_{ij}\kappa_i}{\kappa_i^2-1}>0.
\end{equation}
Let $D$ be the $4\times6$ radius--length Jacobian in the convention 
\begin{equation}\label{Eq: F32}
\delta l=D^T\delta r=D^TRv,
\end{equation}
where (\ref{Eq: F31}) is used.
Differentiating \eqref{eq:length-law} while keeping the intersection angles fixed gives
\begin{align*}
\sinh l_{ij}\frac{\partial l_{ij}}{\partial r_i}
=s_is_j(\kappa_j+\eta_{ij}\kappa_i),\quad
\sinh l_{ij}\frac{\partial l_{ij}}{\partial r_j}
=s_is_j(\kappa_i+\eta_{ij}\kappa_j).
\end{align*}
Dividing by \eqref{th-E} yields all nonzero entries of $D$:
\begin{equation}\label{th-D-entries}
D_{ij,i}
=\frac{\partial l_{ij}}{\partial r_i}
=\frac{\kappa_j+\eta_{ij}\kappa_i}{\sqrt{E_{ij}}}
=\frac{a_{ij}(\kappa_i^2-1)}{\sqrt{E_{ij}}},
\quad
D_{ij,j}
=\frac{\partial l_{ij}}{\partial r_j}
=\frac{\kappa_i+\eta_{ij}\kappa_j}{\sqrt{E_{ij}}}
=\frac{a_{ji}(\kappa_j^2-1)}{\sqrt{E_{ij}}}.
\end{equation}
If $m\notin\{i,j\}$, then $D_{ij,m}=0$. 
Here $a_{ij}$ is defined by (\ref{Eq: F30}).

\begin{remark}
There is a geometric explanation about the matrix $D$.
Let $\theta_{ij}$ be the inner angle opposite to the edge with length $r_j$ in the hyperbolic triangle formed by three edges with lengths $r_i,\ r_j,\ l_{ij}$, as shown in Figure \ref{triangle}.
By hyperbolic cosine laws, we have
\begin{equation*}
\cos \theta_{ij}
=\frac{\cosh r_i\cosh l_{ij}-\cosh r_j}{\sinh r_i\sinh l_{ij}}
=\frac{s_is_j(\kappa_j+\eta_{ij}\kappa_i)}{\sinh l_{ij}}
=\frac{\kappa_j+\eta_{ij}\kappa_i}{\sqrt{E_{ij}}}
=\frac{\partial l_{ij}}{\partial r_i},
\end{equation*}
where (\ref{eq:length-law}), \eqref{th-E} and \eqref{th-D-entries} are used. 
Then
\begin{equation*}
D=\left(
   \begin{array}{cccccc}
    \cos \theta_{ij} & \cos \theta_{ik} & \cos \theta_{ih} & 0 & 0 & 0 \\
     \cos \theta_{ji} & 0 & 0 & \cos \theta_{jk} & \cos \theta_{jh} & 0 \\
     0 & \cos \theta_{ki} & 0 & \cos \theta_{kj} & 0 & \cos \theta_{kh} \\
     0 & 0 & \cos \theta_{hi} & 0 & \cos \theta_{hj} & \cos \theta_{hk}
    \end{array}
 \right).
\end{equation*}
Since $\Phi_{ij}\in [0,\frac{\pi}{2}]$, it follows from (\ref{Eq: Thurston hyperbolic length}) that 
$l_{ij}>r_i$ and $l_{ij}>r_j$.
Then $\cos \theta_{ij}\in (0,1]$.
It is easy to check that $Rank(D)=4$.
\end{remark}

Define 
\begin{equation}\label{Eq: F17}
U=D^TR, \qquad
U_{ij,i}=\frac{a_{ij}}{\sqrt{E_{ij}}},\quad
U_{ij,j}=\frac{a_{ji}}{\sqrt{E_{ij}}},\quad
U_{ij,m}=0,\ \text{for}\ m\notin\{i,j\}.
\end{equation}
%Combining (\ref{Eq: F32}), \eqref{th-D-entries}, $s_i^2=(\kappa_i^2-1)^{-1}$ and (\ref{Eq: F17}) gives
%\begin{equation*}
%\delta l_{ij}
%=D_{ij,i}s_i^2v_i+D_{ij,j}s_j^2v_j
%=\frac{a_{ij}v_i+a_{ji}v_j}{\sqrt{E_{ij}}}
%=U_{ij,i}v_i+U_{ij,j}v_j.
%\end{equation*}
Then $\delta l=Uv$.
Let $J=\frac{\partial\beta}{\partial l}$ with
\begin{equation*}
J_{e,f}=\left(\frac{\partial\beta_e}{\partial l_f}\right).
\end{equation*}
Denote the volume of a hyperbolic tetrahedron $\sigma$ by $vol$, 
which is considered as a function in the dihedral angles.
By the Schl\"{a}fli formula 
\begin{equation}\label{Schlafli formula}
d vol(\sigma)=-\frac12\sum_{e\in E}l_e\,d\beta_e,
\end{equation}
we obtain
\begin{equation*}
0=-\frac12\sum_e dl_e\wedge d\beta_e
=-\frac12\sum_{e,f}J_{e,f}\,dl_e\wedge dl_f
=-\frac12\sum_{e<f}\left(J_{e,f}-J_{f,e}\right)\,dl_e\wedge dl_f.
\end{equation*}
Since the differential forms $dl_e\wedge dl_f$ are linearly independent, 
it follows that $J$ is symmetric.
By definition, we have 
\begin{equation}\label{Eq: variation of beta in v}
\delta\beta=J\delta l=JUv.
\end{equation}

Set
\begin{equation}\label{eq:matrices}
 \Lambda_\sigma=DJD^T,
 \qquad B=-R\Lambda_\sigma R.
\end{equation}
Then $\Lambda_\sigma$ is symmetric by the symmetry of the matrix $J$.
The matrix $\Lambda_\sigma$ naturally appears in the variation of discrete Hilbert-Einstein functional.
Please refer to Section \ref{section 4} for the details.

\begin{lemma}\label{Lem: B_ij}
For distinct $i,j$, with $\{k,l\}=\sigma\setminus\{i,j\}$,
\begin{equation}\label{th-B-offdiag}
B_{ij}=-\frac{\sqrt q}{E_{ij}}
\bigl(h_kb_l^{(k)}+h_lb_k^{(l)}\bigr).
\end{equation}
\end{lemma}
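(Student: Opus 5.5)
We compute $B=-R\Lambda_\sigma R=-RDJD^TR=-U^TJU$, using $U=D^TR$ from (\ref{Eq: F17}). Then $\delta\beta=JUv$ by (\ref{Eq: variation of beta in v}), so for any $v\in\mathbb R^4$,
\begin{equation*}
(Bv)_m=-\sum_{e}U_{e,m}\,\delta\beta_e .
\end{equation*}
The plan is therefore to compute $U^T\delta\beta$ directly from the explicit formula (\ref{th-dbeta}). This avoids computing $J$ itself.

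\textbf{Step 1: assemble $U^T\delta\beta$.} Fix $i$. By (\ref{Eq: F17}), the only edges $e$ with $U_{e,i}\neq0$ are $\{ij\},\{ik\},\{il\}$. For each such edge $\{ip\}$, with complementary pair $\{s,t\}$, formula (\ref{th-dbeta}) gives
\begin{equation*}
-U_{ip,i}\,\delta\beta_{ip}=\frac{a_{ip}\sqrt q}{E_{ip}}\bigl[b_t^{(s)}(Hv)_s+b_s^{(t)}(Hv)_t\bigr].
\end{equation*}
Hence
\begin{equation*}
(Bv)_i=\sqrt q\sum_{p\neq i}\frac{a_{ip}}{E_{ip}}\bigl[b_t^{(s)}(Hv)_s+b_s^{(t)}(Hv)_t\bigr].
\end{equation*}
Writing $Hv=w$, this says $B=\sqrt q\, C H$ for an explicit $4\times4$ matrix $C$. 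The entry $C_{i,s}$ collects, over the two edges $\{ip\}$ at $i$ with $s\notin\{i,p\}$, the terms $\frac{a_{ip}}{E_{ip}}b_t^{(s)}$, where $t$ is the remaining index. In particular $C_{ii}=0$.

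\textbf{Step 2: multiply back by $M$.} Since $B=\sqrt q\,CM^{-1}$, the claimed formula (\ref{th-B-offdiag}) is equivalent to an identity of the form $C=\frac{1}{\sqrt q}BM$. It seems cleaner to verify it entrywise by a different route. Since $B$ is symmetric, I would instead compute $B_{ij}$ as $-\sum_e U_{e,i}(JU)_{e,j}$ and pick $v=e_j$, which gives $(Hv)=H_{\cdot j}$. Then
\begin{equation*}
B_{ij}=\sqrt q\sum_{p\neq i}\frac{a_{ip}}{E_{ip}}\bigl[b_t^{(s)}H_{sj}+b_s^{(t)}H_{tj}\bigr].
\end{equation*}
The target is to collapse this sum to $-\frac{\sqrt q}{E_{ij}}(h_kb_l^{(k)}+h_lb_k^{(l)})$. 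Note that only the edge $\{ij\}$ carries the weight $1/E_{ij}$ on the right-hand side.

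\textbf{Step 3: key identity (main obstacle).} The heart of the proof is a linear relation expressing $\sum_p a_{ip}(\cdots)$ through $h$. Here $h=H\kappa$ satisfies $Mh=\kappa$, so $\kappa_i=-h_i+\sum_{p\neq i}\eta_{ip}h_p+\kappa_i(\kappa\cdot h)$. I would combine this with the definition $a_{ip}=(\kappa_p+\eta_{ip}\kappa_i)/(\kappa_i^2-1)$ and with the row relations $\sum_m M_{im}H_{mj}=\delta_{ij}$ for $j\ne i$.

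The first move is to rewrite $a_{ip}/E_{ip}$ using $E_{ip}=-\det M_{\{i,p\}}$, which gives the $2\times2$ inverse of $M_{\{i,p\}}$. Via the Schur-complement relation (\ref{th-b-block}) and (\ref{Eq: F24}), the factor $a_{ip}$ should then arise as a component of $M_{\{i,p\}}^{-1}\kappa_{\{i,p\}}$, up to sign and scaling. The cross terms from the edges $\{ik\}$ and $\{il\}$ should then telescope against the $\{ij\}$ term by the symmetry $b$–$H$ relations. These come from (\ref{th-b-block}): $b_l^{(k)}H_{kk}=h_lH_{kk}-H_{lk}h_k$, which is a $2\times2$ minor of the $4\times5$ matrix $[H\ h]$. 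Using Jacobi's complementary-minor identity for $[M\ \kappa]$ then turns each product $b\cdot H$ into minors of $M$, which is where the weights $1/E$ appear.

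I expect this minor bookkeeping to be the hardest step. If the symbolic collapse resists, the fallback is to verify the identity by direct symbolic computation, in the spirit of the paper's Mathematica checks.
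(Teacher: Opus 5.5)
Your Steps 1 and 2 are correct and match the paper's starting point. From $Bv=-U^TJUv=-U^T\delta\beta$ and \eqref{th-dbeta} with $v=e_j$ you arrive exactly at the six-term expression \eqref{th-six-terms}. However, the whole content of the lemma lies in collapsing that sum, and Step 3 does not do it. You list ingredients that are individually true: $Mh=\kappa$, and indeed $(M_{\{i,p\}}^{-1}\kappa_{\{i,p\}})_p=(\kappa_i^2-1)a_{ip}/E_{ip}$. But you never state an identity that turns the weights $a_{ik}/E_{ik}$ and $a_{il}/E_{il}$ into the single weight $1/E_{ij}$. ``Should telescope'' and an unspecified computer-algebra fallback are not a proof, so there is a genuine gap at exactly the step you flagged.

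The missing idea is to regroup the six terms by $h_k$ and $h_l$ instead of by entries of $H$.
\begin{itemize}
\item Substitute \eqref{th-b-block}, i.e.\ $b_l^{(k)}=h_l-\frac{H_{lk}}{H_{kk}}h_k$, into each bracket. In the brackets coming from the edges $\{ik\}$ and $\{il\}$ the $h_j$-terms cancel.
\item Use the Schur-complement formula $F^{(k)}_{uv}=H_{uv}-H_{uk}H_{kv}/H_{kk}$ for $F^{(k)}=M_{\widehat k}^{-1}$, read off from \eqref{Eq: F24}. With $\widetilde F^{(l)}=M_{\widehat l}^{-1}$, the three brackets become $h_kF^{(k)}_{lj}+h_l\widetilde F^{(l)}_{kj}$, $h_l\widetilde F^{(l)}_{jj}$ and $h_kF^{(k)}_{jj}$.
\item The lemma then reduces to
\[
\frac{a_{ij}}{E_{ij}}F^{(k)}_{lj}+\frac{a_{il}}{E_{il}}F^{(k)}_{jj}=-\frac{b_l^{(k)}}{E_{ij}}
\]
and its $k\leftrightarrow l$ twin. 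These involve only the $3\times3$ matrix $M_{\{i,j,l\}}$.
\item Write $a=\kappa_i$, $b=\kappa_j$, $c=\kappa_l$, $x=\eta_{ij}$, $y=\eta_{il}$, $u=\eta_{jl}$ and $Q_F=\det M_{\{i,j,l\}}$. Cofactor expansion gives
\[
Q_FF^{(k)}_{jj}=-E_{il},\qquad Q_FF^{(k)}_{lj}=N:=(ab+x)(ac+y)-(a^2-1)(bc+u),
\]
and $Q_Fb_l^{(k)}=n:=a(xu+y)+b(u+xy)+c(1-x^2)$.
\item The identity then follows from the polynomial relation $(b+xa)N-(c+ya)E_{ij}=-(a^2-1)n$.
\end{itemize}
If you work with the unreduced sum over entries of the full $4\times4$ inverse, as you propose, it is unclear how the weights $1/E_{ik}$ and $1/E_{il}$ are to cancel. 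This reduction is what makes the cancellation visible and turns the verification into a short hand computation.
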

\proof
By \eqref{Eq: F17} and \eqref{Eq: variation of beta in v}, we have $Bv=-RDJD^TRv=-U^TJUv=-U^T\delta\beta$.
As a result,
\begin{equation}\label{eq:Bv-first}
(Bv)_i=-\sum_{j\ne i}\frac{a_{ij}}{\sqrt{E_{ij}}}\,\delta\beta_{ij}.
\end{equation}
Fix distinct $i,j,k,l$. 
Substituting \eqref{th-dbeta} into \eqref{eq:Bv-first} and taking the coefficient of $v_j$ gives the six terms
\begin{equation}\label{th-six-terms}
\frac{B_{ij}}{\sqrt q}
=\frac{a_{ij}}{E_{ij}}
\bigl(b_l^{(k)}H_{kj}+b_k^{(l)}H_{lj}\bigr)
+\frac{a_{ik}}{E_{ik}}
\bigl(b_l^{(j)}H_{jj}+b_j^{(l)}H_{lj}\bigr)
+\frac{a_{il}}{E_{il}}
\bigl(b_k^{(j)}H_{jj}+b_j^{(k)}H_{kj}\bigr).
\end{equation}
The three terms come from the incident edges $ij,ik,il$, respectively. 
Set
\begin{equation*}
F^{(k)}=M_{\{i,j,l\}}^{-1},\qquad 
\widetilde F^{(l)}=M_{\{i,j,k\}}^{-1}.
\end{equation*}
By (\ref{Eq: F24}), we have
\begin{equation*}
\begin{aligned}
H_{\widehat k}
&=F^{(k)}+F^{(k)} M_{\widehat k,k}H_{kk}M_{k,\widehat k}F^{(k)}\\
&=F^{(k)}+(-H_{\widehat k,k}H_{kk}^{-1})H_{kk}(-H_{k,\widehat k}H_{kk}^{-1})\\
&=F^{(k)}+\frac{H_{\widehat k,k}H_{k,\widehat k}}{H_{kk}},
\end{aligned}
\end{equation*}
where $H_{\widehat k,k}=-F^{(k)} M_{\widehat k,k}H_{kk}$ is used in the second line.
This yields
\begin{equation*}
F^{(k)}_{uv}=H_{uv}-\frac{H_{uk}H_{kv}}{H_{kk}}
\qquad(u,v\ne k).
\end{equation*}
Interchanging with $k,l$ yields
\begin{equation*}
\widetilde F^{(l)}_{uv}=H_{uv}-\frac{H_{ul}H_{lv}}{H_{ll}}.
\end{equation*}
We now simplify the three brackets in \eqref{th-six-terms}
separately. 
For the first one, the formula \eqref{th-b-block} gives
\begin{equation}\label{Eq: F12}
\begin{aligned}
b_l^{(k)}H_{kj}+b_k^{(l)}H_{lj}
&=\left(h_l-\frac{H_{lk}}{H_{kk}}h_k\right)H_{kj}
 +\left(h_k-\frac{H_{kl}}{H_{ll}}h_l\right)H_{lj}\\
&=h_k\left(H_{lj}-\frac{H_{lk}H_{kj}}{H_{kk}}\right)
 +h_l\left(H_{kj}-\frac{H_{kl}H_{lj}}{H_{ll}}\right)\\
&=h_kF^{(k)}_{lj}+h_l\widetilde F^{(l)}_{kj}.
\end{aligned}
\end{equation}
For the second bracket,
\begin{equation}\label{Eq: F13}
\begin{aligned}
b_l^{(j)}H_{jj}+b_j^{(l)}H_{lj}
&=\left(h_l-\frac{H_{lj}}{H_{jj}}h_j\right)H_{jj}
 +\left(h_j-\frac{H_{jl}}{H_{ll}}h_l\right)H_{lj}\\
&=h_lH_{jj}-H_{lj}h_j+h_jH_{lj}
 -\frac{H_{jl}H_{lj}}{H_{ll}}h_l\\
&=h_l\left(H_{jj}-\frac{H_{jl}^2}{H_{ll}}\right)\\
&=h_l\widetilde F^{(l)}_{jj}.
\end{aligned}
\end{equation}
Here the two terms containing $h_j$ cancel. Likewise,
\begin{equation}\label{Eq: F14}
\begin{aligned}
b_k^{(j)}H_{jj}+b_j^{(k)}H_{kj}
&=\left(h_k-\frac{H_{kj}}{H_{jj}}h_j\right)H_{jj}
 +\left(h_j-\frac{H_{jk}}{H_{kk}}h_k\right)H_{kj}\\
&=h_kH_{jj}-H_{kj}h_j+h_jH_{kj}
 -\frac{H_{jk}H_{kj}}{H_{kk}}h_k\\
&=h_k\left(H_{jj}-\frac{H_{jk}^2}{H_{kk}}\right)\\
&=h_kF^{(k)}_{jj}.
\end{aligned}
\end{equation}
Substituting (\ref{Eq: F12}), (\ref{Eq: F13}) and (\ref{Eq: F14}) into \eqref{th-six-terms} yields
\begin{equation*}
\frac{B_{ij}}{\sqrt q}
=h_k\left(\frac{a_{ij}}{E_{ij}}F^{(k)}_{lj}
          +\frac{a_{il}}{E_{il}}F^{(k)}_{jj}\right)
+h_l\left(\frac{a_{ij}}{E_{ij}}\widetilde F^{(l)}_{kj}
          +\frac{a_{ik}}{E_{ik}}\widetilde F^{(l)}_{jj}\right).
\end{equation*}
Hence, to prove \eqref{th-B-offdiag}, it suffices to prove
\begin{equation}\label{th-three-identity}
\frac{a_{ij}}{E_{ij}}F^{(k)}_{lj}
+\frac{a_{il}}{E_{il}}F^{(k)}_{jj}
=-\frac{b_l^{(k)}}{E_{ij}}.
\end{equation}
Interchanging $k,l$ in the identity \eqref{th-three-identity} gives
\[
\frac{a_{ij}}{E_{ij}}\widetilde F^{(l)}_{kj}
+\frac{a_{ik}}{E_{ik}}\widetilde F^{(l)}_{jj}
=-\frac{b_k^{(l)}}{E_{ij}}.
\]

Now we prove (\ref{th-three-identity}).
For this computation only, set
\[
a=\kappa_i,\quad b=\kappa_j,\quad c=\kappa_l,\qquad
x=\eta_{ij},\quad y=\eta_{il},\quad u=\eta_{jl},
\]
and let $Q_F=\det M_{\{i,j,l\}}$. These scalar abbreviations are
confined to the present computation. Then
\[
M_{\{i,j,l\}}=
\begin{pmatrix}
a^2-1&ab+x&ac+y\\
ab+x&b^2-1&bc+u\\
ac+y&bc+u&c^2-1
\end{pmatrix}.
\]
Then $Q_FF^{(k)}=\det M_{\{i,j,l\}}\cdot M^{-1}_{\{i,j,l\}}=\adj M_{\{i,j,l\}}$.
Cofactor expansion gives
\begin{align*}
Q_FF^{(k)}_{jj}
&=(a^2-1)(c^2-1)-(ac+y)^2\\
&=-a^2-c^2-2acy-y^2+1
=-E_{il},\\
Q_FF^{(k)}_{lj}
&=(ab+x)(ac+y)-(a^2-1)(bc+u)\\
&=-a^2u+aby+acx+bc+u+xy=:N,\\
Q_FF^{(k)}_{li}
&=(ab+x)(bc+u)-(b^2-1)(ac+y)\\
&=abu+bcx+xu-b^2y+ac+y,\\
Q_FF^{(k)}_{ll}
&=(a^2-1)(b^2-1)-(ab+x)^2
=-E_{ij}.
\end{align*}
Since $b_l^{(k)}=aF^{(k)}_{li}+bF^{(k)}_{lj}+cF^{(k)}_{ll}$ by (\ref{th-b-definition}), we obtain
\begin{equation}\label{Eq: F25}
\begin{aligned}
Q_Fb_l^{(k)}
={}&a(abu+bcx+xu-b^2y+ac+y)\\
&+b(-a^2u+aby+acx+bc+u+xy)\\
&-c(a^2+b^2+2xab+x^2-1)\\
={}&a(xu+y)+b(u+xy)+c(1-x^2)=:n.
\end{aligned}
\end{equation}
By direct calculations, we have
\begin{equation*}%\label{th-polynomial}
(b+xa)N-(c+ya)E_{ij}=-(a^2-1)n.
\end{equation*}
%To verify it independently, expand the two products:
%\begin{align*}
%(b+xa)N
%={}&-a^2bu+ab^2y+2abcx+b^2c+bu+bxy\\
%&-a^3xu+a^2bxy+a^2cx^2+axu+ax^2y,\\
%(c+ya)E_{ij}
%={}&a^2c+b^2c+2abcx+cx^2-c\\
%&+a^3y+ab^2y+2a^2bxy+ax^2y-ay.
%\end{align*}
%Subtracting and collecting the remaining terms gives
%\begin{align*}
%(b+xa)N-(c+ya)E_{ij}
%={}&-a^3xu-a^3y-a^2bu-a^2bxy+a^2cx^2-a^2c\\
%&+axu+ay+bu+bxy-cx^2+c\\
%={}&-a^2\bigl[a(xu+y)+b(u+xy)+c(1-x^2)\bigr]\\
%&+\bigl[a(xu+y)+b(u+xy)+c(1-x^2)\bigr]\\
%={}&-(a^2-1)n,
%\end{align*}
%as claimed. 
Now $a_{ij}=(b+xa)/(a^2-1)$ and $a_{il}=(c+ya)/(a^2-1)$, so
\begin{align*}
\frac{a_{ij}}{E_{ij}}F^{(k)}_{lj}+\frac{a_{il}}{E_{il}}F^{(k)}_{jj}
&=\frac{b+xa}{(a^2-1)E_{ij}}\frac{N}{Q_F}
+\frac{c+ya}{(a^2-1)E_{il}}\frac{-E_{il}}{Q_F}\\
&=\frac{(b+xa)N-(c+ya)E_{ij}}{(a^2-1)E_{ij}Q_F}\\
&=-\frac{n}{E_{ij}Q_F}\\
&=-\frac{b_l^{(k)}}{E_{ij}}.
\end{align*}
Note that all denominators here are nonzero, including $a>1$, $E_{ij}>0$, and $Q_F>0$.
This proves \eqref{th-three-identity}.
\qed

Define
\begin{equation}\label{eq:C-def}
 C=S^{-1}BS^{-1}=-S\Lambda_\sigma S.
\end{equation}
We have the following useful identities for the matrix $C$.
\begin{theorem}\label{thm C identity}
For every $i\in\sigma$, $i\ne j$ and
$\{k,l\}=\sigma\setminus\{i,j\}$,
\begin{equation}\label{eq:C-diagonal}
C_{ii}=-\sum_{j\ne i}\cosh l_{ij}\,C_{ij},
\end{equation}
and
\begin{equation}\label{eq:Cij-basic}
 C_{ij}=-\frac{\sqrt q}{s_is_jE_{ij}}
 \left(h_kb_l^{(k)}+h_lb_k^{(l)}\right).
\end{equation}
\end{theorem}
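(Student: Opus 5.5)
The off-diagonal identity \eqref{eq:Cij-basic} follows at once from Lemma \ref{Lem: B_ij}. Since $S$ is diagonal, \eqref{eq:C-def} gives $C_{ij}=B_{ij}/(s_is_j)$ for $i\neq j$. Substituting \eqref{th-B-offdiag} yields exactly \eqref{eq:Cij-basic}. So the real content is the diagonal identity \eqref{eq:C-diagonal}.

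For \eqref{eq:C-diagonal}, I would avoid computing $C_{ii}$ directly and instead find a vector in the kernel of $C$. The natural candidate comes from the hyperbolic isometry picture. Consider the variation of the radii in which all edge lengths $l_{ij}$ stay fixed to first order. Then $\delta\beta=0$, so $\Lambda_\sigma$ should annihilate the corresponding direction. Concretely, \eqref{eq:C-diagonal} is equivalent to $C w=0$ with $w_j=\cosh l_{ij}$ for $j\ne i$ and $w_i=1$, that is, with $w$ equal to the $i$-th column of $G$.

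Rather than finding a length-preserving direction (there is none in general, since $U$ has rank $4$), I would verify $Cw=0$ algebraically. Write $Cw=S^{-1}BS^{-1}w$ and $B=-U^TJU$. Then $Cw=0$ amounts to $U^TJU S^{-1}G e_i=0$. Using \eqref{th-congruence-G}, we have $S^{-1}Ge_i=MSe_i=s_i Me_i$, so it suffices to show $U^T J U (Me_i)=0$.

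Interpreting $v=Me_i$ as a variation, \eqref{th-dbeta} gives $\delta\beta_{jk}$ in terms of $Hv=HMe_i=e_i$. Hence $(Hv)_m=\delta_{mi}$, and $\delta\beta_{ab}$ vanishes unless $i\in\{k,l\}=\sigma\setminus\{a,b\}$. The only nonzero angle variations are therefore on the three edges opposite-adjacent to $i$, namely $\{ab\}\not\ni i$. For such an edge with $\{k,l\}=\{i,m\}$,
\[
\delta\beta_{ab}=-\frac{\sqrt q}{\sqrt{E_{ab}}}\,b^{(i)}_m .
\]

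Next, $(U^T\delta\beta)_p=\sum_{q\ne p}\frac{a_{pq}}{\sqrt{E_{pq}}}\delta\beta_{pq}$. For $p=i$ this sum is zero, since every edge through $i$ has vanishing variation. For $p=j\neq i$, the two edges $\{jk\},\{jl\}$ of the face opposite $i$ contribute, and we need
\[
\frac{a_{jk}}{E_{jk}}b^{(i)}_l+\frac{a_{jl}}{E_{jl}}b^{(i)}_k=0 .
\]
This is the main obstacle. I expect it to reduce to an identity within the $3\times3$ matrix $M_{\widehat i}$, in the same spirit as the cofactor computation \eqref{Eq: F25}. Using $b^{(i)}_l=n/Q_F$ with the explicit $n$, and $a_{jk}=(\kappa_k+\eta_{jk}\kappa_j)/(\kappa_j^2-1)$, I would clear denominators and check the resulting polynomial identity by direct expansion, or by Mathematica as elsewhere in the paper. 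If the sign does not work out exactly, the fallback is to compute $C_{ii}$ from \eqref{eq:Bv-first} and \eqref{th-dbeta} as was done for $B_{ij}$, and then compare with \eqref{eq:Cij-basic} using \eqref{th-three-identity} and the relation $\cosh l_{ij}=s_is_j M_{ij}$.
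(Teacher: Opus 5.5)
Your handling of \eqref{eq:Cij-basic} is fine. However, the reduction of \eqref{eq:C-diagonal} to $Cw=0$ is wrong, and the identity you call the main obstacle is false.

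With $w=Ge_i$ one has $(Cw)_p=\sum_{j}C_{pj}\cosh l_{ij}$, where $\cosh l_{ii}=1$. So \eqref{eq:C-diagonal} says only that $(Cw)_i=0$, i.e.\ that the diagonal of $CG$ vanishes. If $Cw=0$ held for every $i$, you would get $CG=0$, hence $C=0$ because $G$ is invertible. Your own computation shows this directly. For $j\neq i$ and $\{k,l\}=\sigma\setminus\{i,j\}$ it gives
\[
(Cw)_j=\frac{s_i}{s_j}\sqrt q\left(\frac{a_{jk}}{E_{jk}}b^{(i)}_l+\frac{a_{jl}}{E_{jl}}b^{(i)}_k\right).
\]
For $\eta\in[0,1]^6$ both summands are nonnegative by \eqref{Eq: F30} and \eqref{th-b-positive}. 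They also cannot vanish together, since $b^{(i)}_l=0$ forces $\eta_{jl}=0$ whereas $b^{(i)}_k=0$ forces $\eta_{jl}=1$. So the polynomial identity you planned to check by expansion does not hold, and that part of the plan would fail.

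The fix is to stop one step earlier: the $p=i$ component you already computed is the whole statement. With $v=Me_i$ you have $Hv=e_i$. By \eqref{th-dbeta}, $\delta\beta_{ij}=0$ on every edge through $i$, and then \eqref{eq:Bv-first} gives $(BMe_i)_i=0$. Since $M_{ii}=\kappa_i^2-1=s_i^{-2}$ and $M_{ji}=\kappa_i\kappa_j+\eta_{ij}=\cosh l_{ij}/(s_is_j)$,
\[
0=(BMe_i)_i=\frac{B_{ii}}{s_i^2}+\sum_{j\neq i}\frac{\cosh l_{ij}}{s_is_j}B_{ij}=C_{ii}+\sum_{j\neq i}\cosh l_{ij}\,C_{ij}.
\]

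Corrected this way, your route genuinely differs from the paper's. The paper tests \eqref{th-dbeta} with $v=\kappa$, so $Hv=h$. To identify the resulting $\delta\beta_{ij}$ with $\sqrt{E_{ij}}B_{ij}$, it must first prove the off-diagonal formula of Lemma \ref{Lem: B_ij}, including the three-term identity \eqref{th-three-identity}. It then uses $a_{ij}+\kappa_j=\kappa_i(\kappa_i\kappa_j+\eta_{ij})/(\kappa_i^2-1)$.

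Your choice $v=Me_i$ gives $\delta H=e_ih^T+he_i^T$, which leaves $H_{\widehat i}$ unchanged. By \eqref{th-cos-beta}, the three dihedral angles at edges through $i$ therefore do not move. The diagonal identity then needs no input from Lemma \ref{Lem: B_ij}, which is used only for \eqref{eq:Cij-basic}.
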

\proof
Set $v=\kappa$ in \eqref{th-dbeta}. Since $H\kappa=h$ in (\ref{th-inverse}),
equation \eqref{th-B-offdiag} gives
\begin{align*}
\delta\beta_{ij}
&=-\frac{\sqrt q}{\sqrt{E_{ij}}}
\bigl(b_l^{(k)}h_k+b_k^{(l)}h_l\bigr)
=\sqrt{E_{ij}}B_{ij}.
\end{align*}
Equation \eqref{eq:Bv-first} consequently implies
\begin{equation*}
(B\kappa)_i=-\sum_{j\ne i}a_{ij}B_{ij}.
\end{equation*}
Expanding the left side and moving the off-diagonal terms gives
\begin{equation*}
B_{ii}\kappa_i+\sum_{j\ne i}\kappa_jB_{ij}
=-\sum_{j\ne i}a_{ij}B_{ij}.
\end{equation*}
Hence
\begin{equation}\label{th-B-diagonal}
B_{ii}
=-\sum_{j\ne i}\frac{a_{ij}+\kappa_j}{\kappa_i}B_{ij}
=-\sum_{j\ne i}
\frac{\kappa_i\kappa_j+\eta_{ij}}{\kappa_i^2-1}B_{ij},
\end{equation}
where (\ref{Eq: F30}) is used.
By (\ref{eq:C-def}), we have
\[
 C_{ii}=\frac{B_{ii}}{s_i^2},
 \qquad
 C_{ij}=\frac{B_{ij}}{s_is_j}.
\]
Moreover, $s_i^{-2}=\kappa_i^2-1$.  
Divide \eqref{th-B-diagonal} by $s_i^2$ and replace $B_{ij}$ by $s_is_jC_{ij}$, then we obtain
\begin{equation*}
C_{ii}
=-\sum_{j\ne i}(\kappa_i\kappa_j+\eta_{ij})s_is_j\,C_{ij}
=-\sum_{j\ne i}\cosh l_{ij}\,C_{ij},
\end{equation*}
where the final equality follows from \eqref{eq:length-law}.  This proves \eqref{eq:C-diagonal}.
Combining $C_{ij}=\frac{B_{ij}}{s_is_j}$ and (\ref{th-B-offdiag}) yields (\ref{eq:Cij-basic}).
\qed

\begin{remark}
In \cite[Theorem 31]{Glickenstein JDG}, Glickenstein established a useful identity for the variations of dihedral angles in a Euclidean tetrahedron generated by 3-dimensional Euclidean discrete conformal structures. The identity \eqref{eq:C-diagonal} in Theorem \ref{thm C identity} is a hyperbolic version of Glickenstein's identity in the setting of Thurston's hyperbolic sphere packings.
By checking the proof carefully, we can see that all the identities in this section are valid for all non-degenerate hyperbolic tetrahedron
generated by inversive distance sphere packings with $\eta\in (-1, +\infty)$.
It is believed that a modification of the identity \eqref{eq:C-diagonal} is valid for all 3-dimensional hyperbolic discrete conformal structures.
\end{remark}

\subsection{Negative definiteness of Jacobian matrix}

The matrix $\Lambda_\sigma$ further has the following property.

\begin{theorem}\label{Lem: matrix_negative}
Suppose $(M,\mathcal{T},\Phi)$ is a triangulated connected closed 3-dimensional manifold with a weight $\Phi: E\rightarrow [0,\frac{\pi}{2}]$ satisfying 
$\eta(E_\sigma)\in[0,1]^6\setminus \widetilde{S}$ for any $\sigma\in T$ and $A_i\leq 0$ for any $i\in \sigma$ or the condition (\ref{Eq: SC2}).
The matrix $\Lambda=\sum_{\sigma\in T}\Lambda_\sigma$ is negative definite.
\end{theorem}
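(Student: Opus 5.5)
The plan is to prove that each $\Lambda_\sigma$ is negative definite on $\mathbb{R}^4$; $\Lambda=\sum_\sigma\Lambda_\sigma$, with each $\Lambda_\sigma$ extended by zero to $\mathbb{R}^{|V|}$, is then negative semi-definite. Definiteness of the sum follows because every vertex lies in some tetrahedron: if $x^T\Lambda x=0$, then $x|_\sigma=0$ for every $\sigma$, so $x=0$. By (\ref{eq:C-def}), $\Lambda_\sigma$ is negative definite if and only if $C=-S\Lambda_\sigma S$ is positive definite, so the work is entirely on $C$.

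For $C$, Theorem \ref{thm C identity} supplies the structure. By (\ref{eq:C-diagonal}), $C\,\mathbf{c}_i$-type relations hold: every row satisfies $C_{ii}=-\sum_{j\neq i}\cosh l_{ij}C_{ij}$. The first step is to show $C_{ij}\le 0$ for $i\neq j$, i.e.\ $h_kb^{(k)}_l+h_lb^{(l)}_k\ge 0$, using (\ref{eq:Cij-basic}). When $A_i\le 0$ for all $i$, I would argue that $h=H\kappa$ and the $b^{(k)}$ have definite signs. From Section \ref{section 2}, the $h_i$ of (\ref{Eq: h_i}) are, up to the positive factor $-\det M$ (or $\det M_{\widehat k}$ for $b$), cofactor expressions. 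With $A_i\le0$ the terms $-A_i\kappa_i\ge0$ and the remaining brackets are nonnegative for $\eta\in[0,1]$, so $h_i\ge0$ in the paper's normalization. Since $H=M^{-1}$ and $\det M<0$, this gives a uniform sign for $(H\kappa)_i$. Similarly, $b^{(k)}_l=n/Q_F$ with $n=a(xu+y)+b(u+xy)+c(1-x^2)\ge0$ and $Q_F>0$ by (\ref{Eq: F25}). Matching signs then gives $C_{ij}\le0$, with strict inequality generically. In case (\ref{Eq: SC2}), $\eta$ is constant and the $A_i$ may be positive. There I would exploit the symmetry, possibly combined with continuity in the radii on the connected admissible space from Theorem \ref{Thm: simply connect} to rule out sign changes of $h_kb_l^{(k)}+h_lb_k^{(l)}$. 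I expect this sign analysis, especially for case (\ref{Eq: SC2}), to be the main obstacle.

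Given $C_{ij}\le0$ and $\cosh l_{ij}>1$, the diagonal identity makes $C$ strictly diagonally dominant: $C_{ii}=\sum_{j}\cosh l_{ij}|C_{ij}|>\sum_j|C_{ij}|$ as long as row $i$ has a nonzero off-diagonal entry. A symmetric strictly diagonally dominant matrix with positive diagonal is positive definite. Alternatively, the quadratic form can be written directly as
\[
x^TCx=\sum_{i<j}(-C_{ij})\bigl[\cosh l_{ij}(x_i^2+x_j^2)-2x_ix_j\bigr]>0 .
\]
It remains to exclude an entire row of zeros, which would give $C_{ii}=0$. This would mean $\beta$ is unchanged along $v=e_i$ directions; it should be excluded because $D$ has rank $4$ and $J$ is nondegenerate, since dihedral angles determine a hyperbolic tetrahedron locally. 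Hence $C$ is nonsingular, and in the degenerate-sign situation I would fall back to this nonsingularity together with the semi-definite form above. Thus $\Lambda_\sigma$ is negative definite, and therefore so is $\Lambda$.
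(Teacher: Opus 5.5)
\textbf{Gap 1: strictness in the case $A_i\le 0$.} Your argument here follows the paper's Lemma~\ref{Lem: 1}(a),(b),(d): the signs of $h$ and of $b^{(k)}_l$ give $C_{ij}\le0$, and the quadratic form built from \eqref{eq:C-diagonal} does the rest. Your passage from the $\Lambda_\sigma$ to $\Lambda$ is also fine. But this only gives $C\succeq 0$. Definiteness needs every vertex $i$ to have an incident edge with $C_{ij}<0$, i.e.\ $m_{kl}=h_kb^{(k)}_l+h_lb^{(l)}_k>0$, and your argument for this is invalid.

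$Ce_i=0$ does not say that $\beta$ is unchanged along $v=e_i$. It says $D(JD^Te_i)=0$, i.e.\ $JD^Te_i$ lies in the $2$-dimensional kernel of $D$. Injectivity of $D^T$ and nondegeneracy of $J$ force $DJD^T$ to be nonsingular when $J$ is definite, but not in general. Here $J$ is indefinite:
\begin{itemize}
\item for a regular tetrahedron, lengthening one edge with the other five fixed increases the dihedral angle at that edge, so $J_{ee}>0$;
\item enlarging the tetrahedron uniformly decreases all its dihedral angles, so $l^TJl<0$.
\end{itemize}
As a sanity check, your argument uses nothing about $\Phi$. If it were valid, then together with the connectedness of $\Omega_{1234}$ it would give negative definiteness for every weight in $W$, and the appendix would be unnecessary.

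The paper closes this step with Lemma~\ref{Lem: 1}(c). If some $h_i=0$, the $i$-th row of $\adj(P)$ must vanish, so $\det P=0$ and $\adj(P)=-\rho vv^T$ with $v\ge0$, $Pv=0$, $h=v/(v^T\kappa)$. A case analysis on $|\operatorname{supp}h|\in\{2,3,4\}$ then produces the required edge at every vertex. It uses that $b^{(k)}_l=0$ exactly when $\eta_{ij}=1$ and $\eta_{il}=\eta_{jl}=0$, together with the exclusion of $\widetilde S$.

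\textbf{Gap 2: the case \eqref{Eq: SC2}.} Here you have no proof, and the sign strategy cannot work. For constant $\eta>\frac12$ one has $A_i=(1+\eta)^2(2\eta-1)>0$. By the analysis of Section~\ref{section 2}, with $Q$ viewed as a quadratic in $\kappa_1$, the admissible space then contains points with $-B_1/(2A_1)<\kappa_1<(-B_1+\sqrt{\Delta_1})/(2A_1)$. At such points $Q<0$ but $h_1<0$; for large $\kappa_2,\kappa_3,\kappa_4$ this interval meets $(1,\infty)$. Consequently:
\begin{itemize}
\item $h$ changes sign inside the admissible space, and continuity in $r$ cannot prevent it;
\item the sign of $m_{kl}$ with $1\in\{k,l\}$ is no longer controlled;
\item the quadratic-form identity gives nothing once some $C_{ij}>0$.
\end{itemize}

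The paper proceeds differently. It writes $B=U^TTH$ and reduces $\operatorname{sgn}\det B$ to $\operatorname{sgn}\det W$. It proves $\det W<0$ on the whole admissible space via the computer-verified positivity of the polynomial $\mathcal P$ (Lemma~\ref{Lem: 2}). It then runs a continuity-of-eigenvalues argument on the connected set $\widetilde\Omega_\sigma$, starting from a base point with $\Phi\equiv\frac{\pi}{2}$ where the first case applies. Your fallback to nonsingularity has the right shape: nonsingular everywhere, definite at one point, connected parameter space. But nonsingularity is precisely the hard content, and the justification you give for it is the invalid one from Gap 1.
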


Theorem \ref{Lem: matrix_negative} follows from the following two lemmas, including Lemma \ref{Lem: 1} and Lemma \ref{Lem: 3}.

%The case where the weight $\Phi: E\rightarrow [0,\frac{\pi}{2}]$ satisfying $\eta(E_\sigma)\in[0,1]^6\setminus \widetilde{S}$ for any $\sigma\in T$ and $A_i\leq 0$ for any $i\in \sigma$ follows from the following lemma, 
%and the case where the weight $\Phi: E\rightarrow [0,\frac{\pi}{2}]$ satisfying (\ref{Eq: SC2}) is provided in Appendix \ref{appendix a}.

%Here we just prove the case where the weight $\Phi: E\rightarrow [0,\frac{\pi}{2}]$ satisfying (\ref{Eq: SC1}), and the case where the weight $\Phi: E\rightarrow [0,\frac{\pi}{2}]$ satisfying (\ref{Eq: SC2}) is provided in Appendix \ref{appendix a}.

%We first provide a lemma.

\begin{lemma}\label{Lem: 1}
Let \(\sigma=\{1234\}\).
Assume 
\begin{equation}\label{eq: generalization of sum of angles}
\eta(E_\sigma)\in[0,1]^6\setminus \widetilde{S},
\qquad
A_1,A_2,A_3,A_4\le0,
\end{equation}
where $\widetilde{S}$ is defined by (\ref{Eq: exception weights}), and $A_i$ is defined by (\ref{Eq: A}).
Then
\begin{enumerate}
    \item[(a)] $h_i\geq0$ for every $i\in \{1,2,3,4\}$;
    \item[(b)] $b_l^{(k)}\geq0$ for every pair of distinct indices $k,l$, and $b_l^{(k)}=0$ if and only if $\eta_{ij}=1,\eta_{il}=\eta_{jl}=0$;
    \item[(c)] for any vertex $i$, there exists at least a vertex $j\sim i$, such that 
        \begin{equation*}
        m_{kl}:=b_l^{(k)}h_k+b_k^{(l)}h_l>0,
        \end{equation*}
        where $\{k, l\}=\sigma\setminus \{i, j\}$;
     \item[(d)] $\Lambda_\sigma$ is negative definite for any $r\in\mathbb{R}^4_{>0}$.
\end{enumerate}
\end{lemma}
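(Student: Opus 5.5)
The plan is to move everything to the matrix $C=-S\Lambda_\sigma S$ of Theorem \ref{thm C identity}. Its off-diagonal entries are $C_{ij}=-\frac{\sqrt q}{s_is_jE_{ij}}m_{kl}$, and its diagonal is given by \eqref{eq:C-diagonal}. Parts (a)--(c) are sign statements about $h$, $b$ and $m_{kl}$. Part (d) then follows from a diagonal-dominance argument. Throughout, Corollary \ref{Cor: AS R4} applies: under \eqref{eq: generalization of sum of angles}, every $r\in\mathbb{R}^4_{>0}$ is non-degenerate. So $q>0$, $Q_F>0$, $E_{ij}>0$, and all formulas of Section \ref{section 3} are available for every $r$.

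\textbf{Parts (a) and (b).} For (a) I will relate the vector $h=H\kappa$ of Section \ref{section 3} to the linear forms $h_i$ of \eqref{Eq: h_i} in Section \ref{section 2}; call the latter $h_i^{\mathrm{II}}$ for this step. Since $\det M=\det G/\prod s_i^2=Q$, Jacobi's formula with $\partial M/\partial\kappa_i=e_i\kappa^T+\kappa e_i^T$ gives $\partial Q/\partial\kappa_i=2(\operatorname{adj}M\,\kappa)_i$. On the other hand, \eqref{Eq: Q} and the identity $h_i^{\mathrm{II}}=-\frac12(2A_i\kappa_i+B_i)$ used in the proof of Theorem \ref{Thm: simply connect} give $\partial Q/\partial\kappa_i=-2h^{\mathrm{II}}_i$. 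Hence
\[
h=H\kappa=\frac{\operatorname{adj}M\,\kappa}{\det M}=\frac{h^{\mathrm{II}}}{q}.
\]
Each $h^{\mathrm{II}}_i$ equals $-A_i\kappa_i$ plus $\kappa_j$ times bracketed coefficients such as $x(1-w^2)+yvw+zuw+yu+zv$, all of which are $\ge0$ on $[0,1]^6$. With $A_i\le0$ this gives $h_i\ge0$.

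For (b), I use \eqref{Eq: F25}: $b_l^{(k)}=n/Q_F$ with $Q_F=\det M_{\widehat k}>0$ and
\[
n=\kappa_i(\eta_{ij}\eta_{jl}+\eta_{il})+\kappa_j(\eta_{jl}+\eta_{ij}\eta_{il})+\kappa_l(1-\eta_{ij}^2).
\]
All three terms are nonnegative because $\kappa>1$ and $\eta\in[0,1]$. So $n=0$ exactly when $\eta_{ij}=1$ and $\eta_{il}=\eta_{jl}=0$.

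\textbf{Part (c).} I argue by contradiction. Suppose that for some vertex $i$ all three $m_{kl}$ with $\{k,l\}\subset\sigma\setminus\{i\}$ vanish. Every summand of $m_{kl}$ is $\ge0$ by (a) and (b), so each product $h_kb_l^{(k)}$ and $h_lb_k^{(l)}$ vanishes, for all ordered pairs of distinct $k,l\ne i$.
\begin{itemize}
\item First I note $h\neq0$, since $Mh=\kappa\neq0$.
\item Next I show some $h_p>0$ with $p\ne i$. If $h_p=0$ for all $p\ne i$, then $h=h_ie_i$ and $Mh=\kappa$ gives $M_{pi}h_i=\kappa_p$. The diagonal equation $(\kappa_i^2-1)h_i=\kappa_i$ and an off-diagonal one $(\kappa_p\kappa_i+\eta_{pi})h_i=\kappa_p$ are incompatible, because they force $\kappa_p/\kappa_i>1$ from the first ratio together with a contradictory inequality. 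I will verify this small computation.
\item Then $h_p>0$ forces $b_l^{(p)}=0$ for both $l\in\sigma\setminus\{i,p\}$. By (b), each such vanishing pins down the weights of a face, namely the edge of $\sigma$ disjoint from $\{p,l\}$ equal to $1$ and the two other edges of that face at $l$ equal to $0$.
\item Combining the two resulting patterns should either land in $\widetilde S$ or contradict $A_m\le0$. For instance, $A_m\le0$ forbids two edges of one face both equal to $1$.
\end{itemize}
This bookkeeping is the step I expect to be the main obstacle. If it fails, I would fall back on the explicit configuration computation used at the end of the proof of Lemma \ref{Lem: none zero}, where substituting the forced weights into \eqref{Eq: Q} gave $Q<0$.

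\textbf{Part (d).} By \eqref{eq:Cij-basic} and (c), $C_{ij}\le0$ for $j\ne i$, and for each $i$ some $C_{ij}<0$. By \eqref{eq:C-diagonal} and $\cosh l_{ij}>1$,
\[
C_{ii}=\sum_{j\ne i}\cosh l_{ij}\,|C_{ij}|>\sum_{j\ne i}|C_{ij}|.
\]
So the symmetric matrix $C$ is strictly diagonally dominant with positive diagonal, and hence positive definite by Gershgorin. Then $\Lambda_\sigma=-S^{-1}CS^{-1}$ is negative definite, for every $r\in\mathbb{R}^4_{>0}$.
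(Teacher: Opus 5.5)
Your proposal is correct. Part (b) coincides with the paper. Part (d) is the same diagonal-dominance fact that the paper expresses as $z^TCz=-\sum_{i<j}C_{ij}\bigl[(z_i-z_j)^2+(\cosh l_{ij}-1)(z_i^2+z_j^2)\bigr]$ instead of citing Gershgorin. The genuine differences are in (a) and (c).

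\textbf{Part (a).} The paper uses the rank-one expansion $\det(P+\kappa\kappa^T)=\det P+\kappa^T\adj(P)\kappa$ to get $h=\adj(P)\kappa/\det M$. It then reads off $\adj(P)_{ii}=A_i\le0$ and $\adj(P)_{ij}\le0$. Your Jacobi-formula identity $h=h^{\mathrm{II}}/q$ obtains the same signs by recycling the Section 2 linear forms. It also shows, incidentally, that the two different vectors the paper denotes by $h$ are positively proportional.

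\textbf{Part (c).} The paper studies the support $I_+$ of $h$. A vanishing $h_i$ forces $\det P=0$ and $\adj(P)=-\rho vv^T$ with $Pv=0$ and $I_+=\operatorname{supp}v$, followed by a case analysis on $|I_+|\in\{2,3,4\}$. Your contradiction argument avoids all of this.

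The step you flagged as the main obstacle is in fact immediate. Write $\sigma=\{i,p,l_1,l_2\}$. The condition $b^{(p)}_{l_1}=0$ forces $\eta_{il_2}=1$ and $\eta_{il_1}=0$, whereas $b^{(p)}_{l_2}=0$ forces $\eta_{il_1}=1$. The two patterns are therefore incompatible outright. So at that point you need neither $\widetilde S$, nor $A_m\le0$, nor the $Q<0$ fallback; the hypotheses enter only through non-degeneracy via Corollary \ref{Cor: AS R4}.

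Your Step 2 also closes cleanly, though your description of it is slightly off. The diagonal and off-diagonal equations combine to $\eta_{pi}\kappa_i=-\kappa_p$, which is impossible.

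\textbf{Comparison.} Your route is shorter and uses only $h\ge0$, $b\ge0$ and the zero pattern of $b$. The paper's route yields finer information, for example that all six $m_{kl}$ are positive when $h>0$, and exactly which edges carry $C_{ij}<0$ when $h$ has zero entries.
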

\proof
(a) By multilinearity in the columns,
\begin{equation}\label{th-rank-one-det}
\det M
=\det(P+\kappa\kappa^T)
=\det P+\sum_{t,p}\kappa_t\adj(P)_{tp}\kappa_p
=\det P+\kappa^T\adj(P)\kappa.
\end{equation}
%Indeed, a term containing two columns from $\kappa\kappa^T$ vanishes because those columns are proportional to $\kappa$.
%The term with no such column is $\det P$. The term in which only column $t$ is replaced equals $\sum_p\kappa_t\adj(P)_{tp}\kappa_p$, by expansion along that column. 
%Summing over $t$ gives \eqref{th-rank-one-det}.
The identity $P\adj(P)=(\det P)I$ implies
\begin{align*}
M\adj(P)\kappa
&=(P+\kappa\kappa^T)\adj(P)\kappa\\
&=(\det P)\kappa+\kappa\bigl(\kappa^T\adj(P)\kappa\bigr)\\
&=\bigl(\det P+\kappa^T\adj(P)\kappa\bigr)\kappa\\
&=(\det M)\kappa,
\end{align*}
where \eqref{th-rank-one-det} is used in the last line.
As $M$ is invertible, this implies
\begin{equation}\label{th-h-adjugate}
h=M^{-1}\kappa=\frac{\adj(P)\kappa}{\det M}.
\end{equation}
%Condition (\ref{Eq: SC1}) implies $\Phi_{ij}>0$ for each edge. 
%Otherwise, the other two angles in a face containing that edge would have a sum of at most $\pi$, 
%contradicting (\ref{Eq: SC1}). 
%Thus
%\begin{equation}\label{th-eta-range}
%0\le\eta_{ij}<1.
%\end{equation}
The $i$-th diagonal entry of $\adj(P)$ is
\begin{align*}
\adj(P)_{ii}
=\det\begin{pmatrix}
-1&\eta_{jk}&\eta_{jl}\\
\eta_{jk}&-1&\eta_{kl}\\
\eta_{jl}&\eta_{kl}&-1
\end{pmatrix}
=\eta_{jk}^2+\eta_{jl}^2+\eta_{kl}^2
 +2\eta_{jk}\eta_{jl}\eta_{kl}-1.
\end{align*}
Under the hypotheses in (\ref{eq: generalization of sum of angles}), $\adj(P)_{ii}=A_i\le0$.
%Under the condition (\ref{Eq: SC1}), i.e., $\Phi_{ij}+\Phi_{jk}+\Phi_{ik}\geq\pi, \forall\{ijk\}\in F$, by the same calculations for the equation (\ref{eq A identity}) in the proof of Corollary \ref{Cor: AS R4}, we have
%\begin{align*}
%\adj(P)_{ii}
%=&\cos^2\Phi_{jk}+\cos^2\Phi_{jl}+\cos^2\Phi_{kl}+2\cos\Phi_{jk}\cos\Phi_{jl}\cos\Phi_{kl}-1\\
%=&4\cos\frac{\Phi_{jk}+\Phi_{jl}+\Phi_{kl}}{2}\cos\frac{\Phi_{jl}+\Phi_{kl}-\Phi_{jk}}{2}
%\cos\frac{\Phi_{jk}-\Phi_{jl}+\Phi_{kl}}{2}\cos\frac{\Phi_{jk}+\Phi_{jl}-\Phi_{kl}}{2}\\
%\leq&0.   
%\end{align*}
The off-diagonal entries of $\adj(P)$ are nonpositive.
In fact, if $i\ne j$ and $\{k,l\}=\sigma\setminus\{i,j\}$, then
\begin{equation*}
 \begin{aligned}
\adj(P)_{ij}=-\Big[&\eta_{ij}(1-\eta_{kl}^2)
 +\eta_{ik}\eta_{jl}\eta_{kl}
 +\eta_{il}\eta_{jk}\eta_{kl}
 +\eta_{ik}\eta_{jk}
 +\eta_{il}\eta_{jl}\Big]\leq 0.
 \end{aligned}
\end{equation*}
Since $\kappa_a>0$,
\[
(\adj(P)\kappa)_i
=\adj(P)_{ii}\kappa_i+\sum_{j\ne i}\adj(P)_{ij}\kappa_j\leq0.
\]
Together with $\det M<0$ in (\ref{Eq: F26}), equation \eqref{th-h-adjugate}
therefore yields $h_i\geq0$.

(b) 
%For $b_l^{(k)}$, apply the same rank-one argument to
%$M_{\widehat k}=P_{\widehat k}
%+\kappa_{\widehat k}\kappa_{\widehat k}^T$. It gives
%\[
%M_{\widehat k}^{-1}\kappa_{\widehat k}
%=\frac{\adj(P_{\widehat k})\kappa_{\widehat k}}
%       {\det M_{\widehat k}}.
%\]
%With the remaining indices in the order $i,j,l$, the adjugate is
%\[
%\adj(P_{\{i,j,l\}})=
%\begin{pmatrix}
%1-\eta_{jl}^2&\eta_{ij}+\eta_{il}\eta_{jl}&\eta_{il}+\eta_{ij}\eta_{jl}\\
%\eta_{ij}+\eta_{il}\eta_{jl}&1-\eta_{il}^2&\eta_{jl}+\eta_{ij}\eta_{il}\\
%\eta_{il}+\eta_{ij}\eta_{jl}&\eta_{jl}+\eta_{ij}\eta_{il}&1-\eta_{ij}^2
%\end{pmatrix}.
%\]
%Each entry follows from a two-by-two cofactor.
%Taking the inner product of the third row with $\kappa_{\widehat k}$, we obtain
By (\ref{Eq: F25}), we have
\begin{equation}\label{th-b-positive}
b_l^{(k)}=
\frac{\kappa_i(\eta_{ij}\eta_{jl}+\eta_{il})
+\kappa_j(\eta_{jl}+\eta_{ij}\eta_{il})
+\kappa_l(1-\eta_{ij}^2)}{\det M_{\{i,j,l\}}}.
\end{equation}
%The denominator is positive. The first two terms in the numerator are nonnegative, and the last is positive by \eqref{th-eta-range}. 
The denominator is positive, and the numerator are nonnegative. 
Thus $b_l^{(k)}\geq0$.
Moreover, $b_l^{(k)}=0$ if and only if $\eta_{ij}=1,
\eta_{il}=\eta_{jl}=0$.

(c) We first give the positive support of $h$.
Let
\[
I_+=\{i\in\sigma:h_i>0\}.
\]
Suppose that \(h_i=0\) for at least one \(i\).  Since
\((\adj(P)\kappa)_i\) is a sum of nonpositive terms with positive
coefficients \(\kappa_j\), the entire \(i\)-th row of \(\adj(P)\)
must vanish.  Therefore, \(\det P=0\).

On the other hand, $\det M<0$ in (\ref{Eq: F26}),  equation \eqref{th-rank-one-det}
therefore yields \(\adj(P)\ne0\).  
Hence $\operatorname{rank}P=3$. Note that $P\adj(P)=(\det P) I=0$, we have 
$\operatorname{rank}\adj(P)=1.$
Because \(\adj(P)\) is symmetric and entrywise nonpositive, there
exist \(\rho>0\) and a nonzero vector \(v\ge0\) such that
\[
\adj(P)=-\rho vv^T.
\]
The identity \(P\adj(P)=0\) yields \(Pv=0\).  Furthermore,
\[
\det M=-\rho(v^T\kappa)^2,
\qquad
h=\frac{v}{v^T\kappa}.
\]
Thus
\begin{equation}\label{eq:support}
I_+=\operatorname{supp} v.
\end{equation}

The component form of \(Pv=0\) is $v_i=\sum_{j\ne i}\eta_{ij}v_j$.
If \(d\notin I_+\), then \(v_d=0\), and hence $0=\sum_{a\in I_+}\eta_{da}v_a$.
Since \(v_a>0\) and \(\eta_{da}\ge0\),
\begin{equation}\label{eq:crosszero}
\eta_{da}=0
\quad\text{for every }a\in I_+.
\end{equation}

The support \(I_+\) cannot be empty, because $h=\frac{v}{v^T\kappa}, v\geq 0$ and $v\neq 0$.
We further have that the support \(I_+\) cannot have only one element, because the corresponding
component of \(Pv=0\) would give \(-v_a=0\).  Therefore only the cases \(|I_+|=4,3,2\) are possible.

\noindent\textbf{Case 1: } Assume that $|I_+|=4$.

All \(h_i\) are positive.  
Suppose that $m_{kl}=0$. 
Then $b_l^{(k)}=b_k^{(l)}=0$.
Hence
\[
\eta_{ij}=1,\qquad
\eta_{ik}=\eta_{il}=\eta_{jk}=\eta_{jl}=0.
\]
If \(\eta_{kl}=1\), then \(\eta\in \widetilde{S}\), which is excluded.
If \(\eta_{kl}<1\), then, after a relabeling,
\[
P=
\begin{pmatrix}
-1&1&0&0\\
1&-1&0&0\\
0&0&-1&\eta_{kl}\\
0&0&\eta_{kl}&-1
\end{pmatrix}.
\]
Its kernel is spanned by \(e_i+e_j\), so
\eqref{eq:support} would imply \(h_k=h_l=0\), a contradiction.
Thus \(m_{kl}>0\) for every edge.

\noindent\textbf{Case 2: } Assume that $|I_+|=3$.

Write $I_+=\{a,b,c\}$ and $h_d=0$.
Equation \eqref{eq:crosszero} gives $\eta_{ad}=\eta_{bd}=\eta_{cd}=0$.
For the edge \(\{a,d\}\), the complementary indices are \(b,c\).
Since \(\eta_{ad}=0\ne1\), it follows from (\ref{th-b-positive}) that $b_c^{(b)}>0$.
Since \(h_b>0\), $m_{bc}=h_b b_c^{(b)}+h_c b_b^{(c)}\ge h_b b_c^{(b)}>0$.
Similarly, $m_{ac}>0$ and $m_{ab}>0$.

\noindent\textbf{Case 3: } Assume that $|I_+|=2$.

Write $I_+=\{a,b\}$ and $h_c=h_d=0$.
The \(a\)- and \(b\)-components of \(Pv=0\) give $v_a=\eta_{ab}v_b$ and $v_b=\eta_{ab}v_a$.
Since \(v_a,v_b>0\), it follows that $\eta_{ab}=1$ and $v_a=v_b$.
The remaining components give $\eta_{ac}=\eta_{bc}
=\eta_{ad}=\eta_{bd}=0$.
For the cross edge \(\{a,c\}\), the complementary indices are
\(b,d\).  Since \(\eta_{ac}=0\), it follows from (\ref{th-b-positive}) that $b_d^{(b)}>0$.
Together with \(h_b>0\), this yields \(m_{bd}>0\).  Likewise, $m_{bc}>0$, $m_{ad}>0$, and $m_{ac}>0$.

(d) By Corollary \ref{Cor: AS R4}, under the condition (\ref{eq: generalization of sum of angles}), the admissible space is $\Omega_{1234}(\eta)=\mathbb{R}^4_{>0}$.
By (\ref{eq:C-def}), to prove the negative definiteness of $\Lambda_\sigma$,
it suffices to prove that $C$ is positive definite.
For every $z\in \mathbb{R}^4$, we have
\begin{align*}
z^TCz
&=\sum_iC_{ii}z_i^2+2\sum_{i<j}C_{ij}z_iz_j\\
&=-\sum_i\sum_{j\ne i}C_{ij}\cosh l_{ij}\,z_i^2
  +2\sum_{i<j}C_{ij}z_iz_j\\
&=-\sum_{i<j}C_{ij}
\bigl[\cosh l_{ij}(z_i^2+z_j^2)-2z_iz_j\bigr]\\
&=-\sum_{i<j}C_{ij}
\left[(z_i-z_j)^2+(\cosh l_{ij}-1)(z_i^2+z_j^2)\right],
\end{align*}
where (\ref{eq:C-diagonal}) is used in the second line.

Lemma \ref{Lem: 1} (c) shows for any vertex $i$, there exists at least a vertex $j\sim i$, such that $b_l^{(k)}h_k+b_k^{(l)}h_l>0$.
Since $q>0$ and $E_{ij}>0$, it follows from (\ref{eq:Cij-basic}) that $C_{ij}<0$ for the edge $\{ij\}$.
Hence $z^TCz>0$ for any nonzero $z$.
This implies that $C$ is positive definite.
\qed

As a direct corollary of Lemma \ref{Lem: 1}, we have the following result on the matrix $\Lambda_\sigma$.

\begin{corollary}\label{Cor: negative under sum big than pi}
    Under the condition (\ref{Eq: SC1}), the matrix $\Lambda_\sigma$ is negative definite for any $r\in\mathbb{R}^4_{>0}$.
\end{corollary}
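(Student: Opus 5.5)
The plan is to reduce Corollary \ref{Cor: negative under sum big than pi} directly to Lemma \ref{Lem: 1}(d). That lemma gives negative definiteness of $\Lambda_\sigma$ for every $r\in\mathbb{R}^4_{>0}$ under the hypotheses (\ref{eq: generalization of sum of angles}). So it suffices to show that (\ref{Eq: SC1}), imposed on the four faces of $\sigma$, implies both $\eta(E_\sigma)\notin\widetilde{S}$ and $A_1,A_2,A_3,A_4\le 0$.

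The sign condition on the $A_i$ comes from the factorization (\ref{eq A identity}) in the proof of Corollary \ref{Cor: AS R4}. Consider, for instance,
$$A_1=4\cos\tfrac{\Phi_{23}+\Phi_{24}+\Phi_{34}}{2}\cos\tfrac{\Phi_{24}+\Phi_{34}-\Phi_{23}}{2}\cos\tfrac{\Phi_{23}-\Phi_{24}+\Phi_{34}}{2}\cos\tfrac{\Phi_{23}+\Phi_{24}-\Phi_{34}}{2}.$$
Since each $\Phi\in[0,\frac{\pi}{2}]$, the last three factors are nonnegative. Since $\Phi_{23}+\Phi_{24}+\Phi_{34}>\pi$, the first factor is negative, so $A_1\le0$. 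The same argument gives $A_2,A_3,A_4\le 0$. Equivalently, one may simply quote the second part of Corollary \ref{Cor: AS R4}, because the strict inequality (\ref{Eq: SC1}) implies (\ref{Eq: sum of Phi > pi}).

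To exclude $\widetilde{S}$, note that each point of $\widetilde{S}$ has a face containing exactly one edge with $\eta=1$ (that is, $\Phi=0$) and two edges with $\eta=0$ (that is, $\Phi=\frac{\pi}{2}$). For example, $(x,y,z,u,v,w)=(1,0,0,0,0,1)$ gives $\Phi_{12}=0$ and $\Phi_{13}=\Phi_{23}=\frac{\pi}{2}$. The face $\{123\}$ then has angle sum exactly $\pi$, which contradicts the strict inequality (\ref{Eq: SC1}). The other two points of $\widetilde{S}$ are handled by symmetry. This is precisely where strictness in (\ref{Eq: SC1}) is needed.

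With both hypotheses of (\ref{eq: generalization of sum of angles}) verified, Lemma \ref{Lem: 1}(d) applies and yields that $\Lambda_\sigma$ is negative definite for all $r\in\mathbb{R}^4_{>0}$; here Corollary \ref{Cor: AS R4} guarantees the admissible space is all of $\mathbb{R}^4_{>0}$. No step is a real obstacle. The only point needing care is the exclusion of $\widetilde{S}$, which depends on the strict inequality.
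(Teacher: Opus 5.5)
Your proposal is correct and follows the paper's proof essentially step for step. The paper also uses the factorization (\ref{eq A identity}) to get $A_i\le 0$ from (\ref{Eq: SC1}); it in fact states $A_i<0$, though nonpositivity is all that is needed. It excludes $\widetilde{S}$ because the points of $\widetilde{S}$ give face angle sums exactly $\pi$, cites Corollary \ref{Cor: AS R4} for $\Omega_{1234}(\eta)=\mathbb{R}^4_{>0}$, and concludes with Lemma \ref{Lem: 1}(d).
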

\proof
Following the calculations in equation (\ref{eq A identity}) in the proof of Corollary \ref{Cor: AS R4}, we have $A_i<0$ under the condition (\ref{Eq: SC1}).
Furthermore, for any $\eta\in \widetilde{S}$, the corresponding intersection angles satisfy
$$\Phi_{ij}+\Phi_{jk}+\Phi_{ik}=\pi,\quad \forall\{ijk\}\in F_\sigma.$$
Therefore, under the condition (\ref{Eq: SC1}), $\eta(E_\sigma)\in[0,1]^6\setminus \widetilde{S}$.
By Corollary \ref{Cor: AS R4}, the admissible space is $\mathbb{R}^4_{>0}$.
By Lemma \ref{Lem: 1} (d), the matrix $\Lambda_\sigma$ is negative definite for any $r\in\mathbb{R}^4_{>0}$.
\qed

\begin{lemma}\label{Lem: 3}
Let \(\sigma=\{1234\}\). Under the condition (\ref{Eq: SC2}), the matrix
$\Lambda_\sigma$ is negative definite in the admissible space.
\end{lemma}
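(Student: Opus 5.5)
The plan is to repeat the argument of Lemma \ref{Lem: 1} (d). By (\ref{eq:C-def}) it suffices to show that $C$ is positive definite. By the identity (\ref{eq:C-diagonal}) of Theorem \ref{thm C identity},
\begin{equation*}
z^TCz=-\sum_{i<j}C_{ij}\left[(z_i-z_j)^2+(\cosh l_{ij}-1)(z_i^2+z_j^2)\right].
\end{equation*}
So it is enough to prove that $C_{ij}<0$ for every edge. By (\ref{eq:Cij-basic}) this means proving that every $m_{kl}=h_kb_l^{(k)}+h_lb_k^{(l)}$ is positive at every point of the admissible space. Only the admissible space is available now, because for $C>\frac{\pi}{3}$ it may be a proper subset of $\mathbb{R}^4_{>0}$.

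Throughout, write $\eta_{ij}\equiv c=\cos C\in[0,1]$.

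\textbf{Positivity of the $b$'s.} By (\ref{th-b-positive}) the numerator of $b_l^{(k)}$ is $c(1+c)(\kappa_i+\kappa_j)+(1-c^2)\kappa_l$. This is strictly positive for every $c\in[0,1]$; at $c=1$ it equals $2(\kappa_i+\kappa_j)$. So $b_l^{(k)}>0$ always. In particular, if $h_k,h_l\ge0$ and they are not both zero, then $m_{kl}>0$.

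\textbf{Computing $h$.} The matrix $P=cJ-(1+c)I$, with $J$ the all-ones matrix, has eigenvalues $3c-1$ (on $(1,1,1,1)^T$) and $-(1+c)$ (triple). Hence $\adj(P)=\det P\cdot P^{-1}$ is explicit:
\begin{equation*}
\adj(P)\kappa=(1+c)^2\Big[(1-2c)\kappa+c(1+c)(\textstyle\sum_t\kappa_t)\mathbf 1\Big]\cdot(\text{sign factor}).
\end{equation*}
By (\ref{th-h-adjugate}), $h$ is then an explicit linear function of $\kappa$ divided by $\det M<0$. The sign of $h_i$ is that of $(1+c)\sum_{t\neq i}\kappa_t+ (1-2c+c(1+c))\kappa_i$-type expressions up to the constant $A_i=(1+c)^2(2c-1)$.
\begin{itemize}
\item For $c\le\frac12$ all $A_i\le0$. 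This case is already covered by Lemma \ref{Lem: 1}; one checks that $\widetilde{S}$ is avoided, since constant weights can lie in $\widetilde{S}$ only if $c\in\{0,1\}$ simultaneously, which is impossible.
\item For $c>\frac12$ some $h_i$ may be negative, and this is the real content of the lemma.
\end{itemize}

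\textbf{At most one negative $h_i$.} I first show that in the admissible region ($Q<0$) at most one $h_i$ is negative. Mimicking Lemma \ref{Lem: none zero}: if $h_k<0$ and $h_l<0$, multiplying the two inequalities yields $Q_3=(1-w^2)Q_2\ge0$, which forces degeneracy, contradicting $Q<0$.

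\textbf{The main case.} So assume $h_k<0$, and hence $h_l>0$. Then $m_{kl}>0$ is equivalent to
\begin{equation*}
h_l\,b_k^{(l)}>-h_k\,b_l^{(k)}.
\end{equation*}
With constant weights this is a symmetric polynomial inequality in $(\kappa_1,\dots,\kappa_4)$ and $c$, after clearing the positive denominators $\det M_{\widehat k}$, $\det M_{\widehat l}$ and $-\det M$.

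I would attack it by treating both $Q$ and the difference $D_{kl}$ of the two sides as polynomials in $\kappa_k$, the variable that makes $h_k$ negative. From the proof of Theorem \ref{Thm: simply connect}, $h_k<0$ together with $Q<0$ forces
\begin{equation*}
-\frac{B_k}{2A_k}<\kappa_k<\frac{-B_k+\sqrt{\Delta_k}}{2A_k}.
\end{equation*}
The goal is an identity of the form $\lambda\, D_{kl}=-\mu Q+\nu$, with $\lambda,\mu,\nu\ge0$ on $\{\kappa>1,\ c\in(\frac12,1]\}$. Such an identity would give $D_{kl}>0$ whenever $Q<0$.

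Finding this certificate, presumably the content of Lemma \ref{Lem: 2} and checked symbolically, is the step I expect to be the main obstacle. If a single identity fails, the fallback is to check the sign of $D_{kl}$ at the two endpoints of the $\kappa_k$-interval and use monotonicity or concavity in $\kappa_k$. At the upper endpoint $Q=0$, which should give $D_{kl}\ge0$ by a boundary computation.

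\textbf{Conclusion.} Once $m_{kl}>0$ holds for all edges, every $C_{ij}<0$. The displayed quadratic form is then positive for all $z\neq0$, so $C$ is positive definite and $\Lambda_\sigma$ is negative definite.
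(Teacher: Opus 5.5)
\textbf{The gap.} Your argument needs $m_{kl}=h_kb_l^{(k)}+h_lb_k^{(l)}>0$ for every edge at every admissible point, so that every $C_{ij}<0$. That claim is false once $c=\cos C$ is close to $1$. Consequently no certificate $\lambda D_{kl}=-\mu Q+\nu$ can exist, and the boundary fallback cannot work either.

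Take $\Phi\equiv 0$, i.e. $c=1$, which (\ref{Eq: SC2}) allows. Then $\adj(P)=4(2I-\mathbf 1\mathbf 1^T)$. So (\ref{th-h-adjugate}) gives $h_i=4(\sum_{t\ne i}\kappa_t-\kappa_i)/q$ with $q=4\bigl[4+(\sum_t\kappa_t)^2-2\sum_t\kappa_t^2\bigr]$. Also (\ref{th-b-positive}) gives $b_l^{(k)}=(\kappa_i+\kappa_j)/\bigl(2(1+\kappa_i\kappa_j+\kappa_i\kappa_l+\kappa_j\kappa_l)\bigr)$. A short computation yields
\[
\operatorname{sgn} m_{kl}=\operatorname{sgn}\bigl[2(1+\kappa_i\kappa_j)+(\kappa_i+\kappa_j)(\kappa_k+\kappa_l)-(\kappa_k-\kappa_l)^2\bigr].
\]
At $\kappa=(2,2,2,10)$ (three equal spheres and a small one in their gap) we get $q=144>0$, so the tetrahedron is non-degenerate. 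For the edge $\{12\}$, with $\{k,l\}=\{3,4\}$, one finds $h_3=\frac13$, $h_4=-\frac19$, $b_4^{(3)}=\frac{2}{45}$ and $b_3^{(4)}=\frac{2}{13}$. Hence $m_{34}=-\frac{4}{1755}<0$, and by (\ref{eq:Cij-basic}) $C_{12}=C_{13}=C_{23}=\frac{1}{195}>0$.

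A direct check shows $C$ is still positive definite at this point, as the lemma asserts. But your decomposition of $z^TCz$ now contains negative terms, so it proves nothing. Since $q$ and $m_{kl}$ depend continuously on $c$, the same failure occurs for all $c$ near $1$. For instance, at $c=0.9$ and $\kappa=(1.01,1.01,1.01,7)$ one again has $q>0$ and $m<0$ on the edges of the face opposite the small sphere.

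\textbf{What is needed instead.} For $c>\frac12$ you must give up entrywise sign control and show only that $\Lambda_\sigma$ never becomes singular. The paper writes $B=U^TTH$ using (\ref{th-dbeta}), so that $\det B=-\frac1q\det(U^TT)$. It then reduces the sign of $\det(U^TT)$ to that of an explicit integer polynomial $\mathcal P(y,\eta)$, whose positivity is Lemma \ref{Lem: 2}. That lemma is a determinant statement, not a certificate for $m_{kl}>0$.

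This gives $\operatorname{rank}\Lambda_\sigma=4$ on the set of pairs $(\eta,r)$ with $\Phi$ constant and $r$ admissible. That set is connected, since each fiber is connected by Theorem \ref{Thm: simply connect}. $\Lambda_\sigma$ is negative definite at one point ($\Phi\equiv\frac{\pi}{2}$, by Lemma \ref{Lem: 1}), and continuity of eigenvalues then gives negative definiteness everywhere.

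Your reduction of the case $c\le\frac12$ to Lemma \ref{Lem: 1} is correct. There are three minor slips:
\begin{itemize}
\item The correct formula is $\adj(P)\kappa=(1+c)^2\bigl[(3c-1)\kappa-c(\sum_t\kappa_t)\mathbf 1\bigr]$.
\item The admissible space can be a proper subset when $C<\frac{\pi}{3}$, not when $C>\frac{\pi}{3}$.
\item Two strictly negative $h$'s give $Q_3>0$, which directly contradicts $Q_3=(1-w^2)Q_2\le0$; no appeal to degeneracy is needed.
\end{itemize}
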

As the proof of Lemma \ref{Lem: 3} involves tedious calculations, we defer it to the Appendix.

\section{Rigidity of Thurston's hyperbolic sphere packings}\label{section 4}

\subsection{Some calculations}

Direct calculations give
\begin{equation*}
\begin{aligned}
\frac{\partial^2 l_{ij}}{\partial r^2_i}
=&\frac{\cosh l_{ij}}{\sinh^3 l_{ij}}[\sinh^2 l_{ij}-(\sinh r_i\cosh r_j+\cosh r_i\sinh r_j\cos \Phi_{ij})^2]\\
=&\frac{\cosh l_{ij}}{\sinh^3 l_{ij}}[(\cosh r_i\cosh r_j+\sinh r_i\sinh r_j\cos \Phi_{ij})^2-1\\
&-(\sinh r_i\cosh r_j+\cosh r_i\sinh r_j\cos \Phi_{ij})^2]\\
=&\frac{\cosh l_{ij}}{\sinh^3 l_{ij}}\sinh^2r_j\sin^2\Phi_{ij},
\end{aligned}
\end{equation*}
and
\begin{equation*}
\begin{aligned}
\frac{\partial^2 l_{ij}}{\partial r_i\partial r_j}
=&\frac{1}{\sinh^3 l_{ij}}[(\sinh r_i\sinh r_j+\cosh r_i\cosh r_j\cos \Phi_{ij})(\cosh^2 l_{ij}-1)\\
&-\cosh l_{ij}(\cosh r_i\sinh r_j+\sinh r_i\cosh r_j\cos \Phi_{ij})\\
&\times(\sinh r_i\cosh r_j+\cosh r_i\sinh r_j\cos \Phi_{ij})]\\
=&-\frac{\sin^2\Phi_{ij}}{\sinh^3 l_{ij}}\sinh r_i\sinh r_j.
\end{aligned}
\end{equation*}
Then
\begin{equation*}
\mathrm{Hess}_r(l_{ij})
=\frac{\sin^2\Phi_{ij}}{\sinh^3 l_{ij}}
\left(
   \begin{array}{cc}
    \cosh l_{ij}\sinh^2r_j & -\sinh r_i\sinh r_j  \\
     -\sinh r_i\sinh r_j & \cosh l_{ij}\sinh^2r_i  \\
    \end{array}
 \right),
\end{equation*}
which is a symmetric and positive definite matrix if $\Phi_{ij}\neq0$ for any edge $\{ij\}\in E$.

\subsection{Discrete Hilbert-Einstein functional}
%Denote the volume of a hyperbolic tetrahedra $\sigma=\{ijkh\}$ by $vol$, 
%which is considered as a function in the dihedral angles.
%By the Schl\"{a}fli formula, we have
%\begin{equation}\label{Schlafli formula}
%2d vol+\sum_{j\sim i}l_{ij}d\beta_{ij,kh}=0.
%\end{equation}
Consider the following discrete Hilbert-Einstein functional
\begin{equation*}
\mathcal{F}=\sum_{\{ij\}\in E}K_{ij}l_{ij}-2\sum_{\sigma\in T} vol
\end{equation*}
on $(M,\mathcal{T},\Phi)$, where $K_{ij}$ is the combinatorial Ricci curvature along the edge $\{ij\}\in E$ defined by (\ref{Eq: CRC}), and $vol$ is the volume of a hyperbolic tetrahedron $\sigma=\{ijkh\}$.
By the Schl\"{a}fli formula in (\ref{Schlafli formula}), we have $d\mathcal{F}=\sum_{j\sim i}K_{ij}dl_{ij}$, which implies
\begin{equation}\label{Eq: F10}
\frac{\partial \mathcal{F}}{\partial r_i}
=\sum_{j;j\sim i}K_{ij}\frac{\partial l_{ij}}{\partial r_i}
=\sum_{j;j\sim i}K_{ij}\cos \theta_{ij}
=K_i.
\end{equation}
Furthermore,
\begin{equation*}
\begin{aligned}
\frac{\partial^2 \mathcal{F}}{\partial r_i\partial r_j}
=&\sum_{j;j\sim i}\frac{\partial K_{ij}}{\partial r_j}\frac{\partial l_{ij}}{\partial r_i}
+\sum_{j;j\sim i}K_{ij}\frac{\partial^2 l_{ij}}{\partial r_i\partial r_j}\\
=&\sum_{s\sim t}\sum_{u\sim v}\frac{\partial l_{uv}}{\partial r_j}\frac{\partial K_{st}}{\partial l_{uv}}\frac{\partial l_{st}}{\partial r_i}
+\sum_{s\sim t}K_{st}\frac{\partial^2 l_{st}}{\partial r_i\partial r_j}\\
=&-\sum_{\{ijkh\}\in T}\sum_{s\sim t}\sum_{u\sim v}\frac{\partial l_{uv}}{\partial r_j}\frac{\partial \beta_{st}}{\partial l_{uv}}\frac{\partial l_{st}}{\partial r_i}
+\sum_{s\sim t}K_{st}\frac{\partial^2 l_{st}}{\partial r_i\partial r_j},
\end{aligned}
\end{equation*}
where $\beta_{ij}:=\beta_{ij,kh}$ for simplification.
This implies
\begin{equation}\label{Eq: Hess G}
\begin{aligned}
\mathrm{Hess}_r \mathcal{F}
=&-\sum_{\sigma=\{ijkh\}\in T}
DJD^T
+\sum_{j\sim i}K_{ij}\mathrm{Hess}_r(l_{ij})\\
=&-\sum_{\sigma=\{ijkh\}\in T} \Lambda_\sigma+\sum_{j\sim i}\frac{K_{ij}\sin^2\Phi_{ij}}{\sinh^3 l_{ij}}
\left(
   \begin{array}{cc}
    \cosh l_{ij}\sinh^2r_j & -\sinh r_i\sinh r_j  \\
     -\sinh r_i\sinh r_j & \cosh l_{ij}\sinh^2r_i  \\
    \end{array}
 \right).
\end{aligned}
\end{equation}

Theorem \ref{Thm: local rigidity} follows from the following theorem.

\begin{theorem}\label{Thm: local rigidity2}
Suppose $(M,\mathcal{T},\Phi)$ is a triangulated connected closed 3-dimensional manifold with a weight $\Phi: E\rightarrow [0,\frac{\pi}{2}]$ satisfying 
$\eta(E_\sigma)\in[0,1]^6\setminus \widetilde{S}$ for any $\sigma\in T$ and $A_i\leq 0$ for any $i\in \sigma$ or the condition (\ref{Eq: SC2}).
If $\overline{r}$ is a non-degenerate Thurston's hyperbolic sphere packing metric on $(M,\mathcal{T},\Phi)$ satisfying $K_{ij}(\overline{r})\sin^2\Phi_{ij}\geq0$ for any edge $\{ij\}\in E$,
then there exists a neighborhood $U$ of $\overline{r}$ such that Thurston's hyperbolic sphere packing metric in $U$ is determined by its combinatorial scalar curvature.
\end{theorem}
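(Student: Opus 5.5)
The plan is a variational argument with the discrete Hilbert-Einstein functional $\mathcal{F}$ as the potential. By (\ref{Eq: F10}), its gradient in the radii is the combinatorial scalar curvature: $\nabla_r\mathcal{F}=(K_1,\dots,K_{|V|})$. Local rigidity with respect to $K$ will therefore follow from the inverse function theorem once we show that $\mathrm{Hess}_r\mathcal{F}(\overline{r})$ is nonsingular. We will in fact show it is positive definite. First we fix the domain. Non-degeneracy is an open condition, $Q<0$ for every tetrahedron by Lemma \ref{Lem: non-degenerate}. So the set of non-degenerate Thurston's hyperbolic sphere packing metrics is open in $\mathbb{R}^{|V|}_{>0}$, and we may choose a small convex ball $U_0$ around $\overline{r}$ inside it on which all the $K_{ij}$, $l_{ij}$ and dihedral angles are smooth. $\mathcal{F}$ is smooth on $U_0$ because the volume of each tetrahedron is a smooth (analytic) function of its edge lengths there.

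Next we use the Hessian formula (\ref{Eq: Hess G}):
\[
\mathrm{Hess}_r\mathcal{F}=-\sum_{\sigma\in T}\Lambda_\sigma+\sum_{\{ij\}\in E}K_{ij}\,\mathrm{Hess}_r(l_{ij}).
\]
Here each $\Lambda_\sigma$ and each $\mathrm{Hess}_r(l_{ij})$ is extended by zero to a $|V|\times|V|$ matrix.

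\emph{First term.} Theorem \ref{Lem: matrix_negative} gives that $\Lambda=\sum_\sigma\Lambda_\sigma$ is negative definite at $\overline{r}$. Its hypotheses are exactly the ones in the statement: either $\eta(E_\sigma)\in[0,1]^6\setminus\widetilde{S}$ with all $A_i\le0$, which goes through Lemma \ref{Lem: 1}(d), or condition (\ref{Eq: SC2}), which goes through Lemma \ref{Lem: 3} on the admissible space. Hence $-\Lambda$ is positive definite.

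\emph{Second term.} The computation in the previous subsection gives
\[
K_{ij}\,\mathrm{Hess}_r(l_{ij})=\frac{K_{ij}\sin^2\Phi_{ij}}{\sinh^3 l_{ij}}\begin{pmatrix}\cosh l_{ij}\sinh^2 r_j & -\sinh r_i\sinh r_j\\ -\sinh r_i\sinh r_j & \cosh l_{ij}\sinh^2 r_i\end{pmatrix}.
\]
The $2\times2$ matrix has positive diagonal entries and determinant $\sinh^2 r_i\sinh^2 r_j(\cosh^2 l_{ij}-1)>0$, so it is positive definite. By the assumption $K_{ij}(\overline{r})\sin^2\Phi_{ij}\ge0$, each summand is positive semidefinite at $\overline{r}$. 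Therefore $\mathrm{Hess}_r\mathcal{F}(\overline{r})$ is positive definite.

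By continuity, $\mathrm{Hess}_r\mathcal{F}$ remains positive definite on a smaller convex neighborhood $U\subset U_0$ of $\overline{r}$. Then $\mathcal{F}$ is strictly convex on $U$, and its gradient map $r\mapsto (K_i(r))_{i\in V}$ is injective on $U$. Indeed, if $K(r_A)=K(r_B)$ with $r_A\neq r_B$, the function $g(t)=\mathcal{F}(r_A+t(r_B-r_A))$ would satisfy $g'(0)=g'(1)$ while $g''>0$ on $[0,1]$, which is impossible. This shows that a Thurston's hyperbolic sphere packing metric in $U$ is determined by its combinatorial scalar curvature. Note that the sign hypothesis on $K_{ij}\sin^2\Phi_{ij}$ is only needed at $\overline{r}$ itself, since openness of positive definiteness handles nearby points.

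The main obstacle is hidden in the negative definiteness of $\Lambda$ invoked from Theorem \ref{Lem: matrix_negative}; once that is granted, the argument above is routine. One point needs checking: Theorem \ref{Lem: matrix_negative} is stated for the assembled matrix $\sum_\sigma\Lambda_\sigma$, and the passage from each $\Lambda_\sigma$ being negative definite to the sum being negative definite must be justified. For any nonzero $z\in\mathbb{R}^{|V|}$, the restriction of $z$ to some tetrahedron is nonzero, since every vertex lies in some tetrahedron. So $z^T\Lambda z=\sum_\sigma z_\sigma^T\Lambda_\sigma z_\sigma<0$. In case (\ref{Eq: SC2}) we must also confirm that $\overline{r}$ lies in each tetrahedron's admissible space, which holds because $\overline{r}$ is non-degenerate.

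Finally, Theorem \ref{Thm: local rigidity} follows from this result. Under (\ref{Eq: SC1}) we have $A_i<0$ and $\eta(E_\sigma)\notin\widetilde{S}$, as in the proof of Corollary \ref{Cor: negative under sum big than pi}, so the hypotheses of the present statement are met.
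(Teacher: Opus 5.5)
Your proposal is correct and takes essentially the same route as the paper: you get positive definiteness of $\mathrm{Hess}_r\mathcal{F}(\overline{r})$ from Theorem~\ref{Lem: matrix_negative} plus the positive semidefinite terms $K_{ij}\,\mathrm{Hess}_r(l_{ij})$, then use strict convexity of $\mathcal{F}$ on a convex neighborhood and a one-variable convexity argument along the segment joining two metrics with equal $K_i$. You also spell out steps the paper leaves implicit, and these are correct: positive definiteness of the $2\times 2$ blocks, the passage from each $\Lambda_\sigma$ to $\sum_\sigma\Lambda_\sigma$, and the fact that $\overline{r}$ lies in each admissible space under (\ref{Eq: SC2}).
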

\proof
If $\overline{r}$ is a non-degenerate Thurston's hyperbolic sphere packing metric on $(M,\mathcal{T},\Phi)$ with $K_{ij}(\overline{r})\sin^2\Phi_{ij}\geq0$ for any edge $\{ij\}\in E$,
then by Theorem \ref{Lem: matrix_negative}, $\mathrm{Hess}_r \mathcal{F}(\overline{r})>0$ and hence the function $\mathcal{F}$ is a strictly convex function on a convex neighborhood $U$ of $\overline{r}$.

Suppose there exist two different Thurston's hyperbolic sphere packing metrics $r_1$ and $r_2$ in $U$ such that $K_i(r_1)=K_i(r_2)$ for all $i\in V$.
Set
\begin{equation*}
f(t)=\mathcal{F}((1-t)r_1+tr_2),\ t\in [0,1].
\end{equation*}
Then $f(t)$ is a $C^1$ smooth convex function of $t\in [0,1]$ with
\begin{equation*}
f'(t)=\sum_{i=1}^{|V|}\nabla_{r_i} \mathcal{F}|_{(1-t)r_1+tr_2}\cdot (r_{2,i}-r_{1,i})
=\sum_{i=1}^{|V|}K_i|_{(1-t)r_1+tr_2}\cdot (r_{2,i}-r_{1,i}),
\end{equation*}
where $|V|$ denotes the cardinality of the vertex set.
By the assumption that $K_i(r_1)=K_i(r_2)$, we have $f'(0)=f'(1)$,
which implies $f'(t)$ is a constant by the convexity of $f(t)$.
Thus $f''(t)\equiv0$ for $t\in [0, 1]$.
Combining with
\begin{equation*}
f''(t)=(r_2-r_1)^T\cdot \mathrm{Hess}_r \mathcal{F}|_{(1-t)r_1+tr_2} \cdot (r_2-r_1)
\end{equation*}
for $t\in [0,1]$ and $\mathrm{Hess}_r \mathcal{F}>0$,
we have $r_1=r_2$.
\qed

Theorem \ref{Thm: infinitesimal rigidity} follows from the following theorem.

\begin{theorem}\label{Thm: infinitesimal rigidity 2}
Suppose $(M,\mathcal{T},\Phi)$ is a triangulated connected closed 3-dimensional manifold with a weight $\Phi: E\rightarrow [0,\frac{\pi}{2}]$ satisfying 
$\eta(E_\sigma)\in[0,1]^6\setminus \widetilde{S}$ for any $\sigma\in T$ and $A_i\leq 0$ for any $i\in \sigma$ or the condition (\ref{Eq: SC2}).
Then there exists a neighborhood $U\subseteq\Omega$ of $\overline{r}$ such that if $r\in U$ has the same combinatorial Ricci curvature as $\overline{r}$, then $r=\overline{r}$.
\end{theorem}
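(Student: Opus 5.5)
The argument rests on the negative definiteness of $\Lambda=\sum_{\sigma\in T}\Lambda_\sigma$ from Theorem \ref{Lem: matrix_negative}, applied to the map $r\mapsto K=(K_{ij})_{\{ij\}\in E}$. The goal is to show that this map is locally injective at $\overline{r}$. Since the map goes from $\mathbb{R}^{|V|}$ to $\mathbb{R}^{|E|}$, it suffices to show that its Jacobian $\partial K/\partial r$ has full rank $|V|$ at $\overline{r}$. By continuity, full rank then holds on a neighborhood $U$ of $\overline{r}$.

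\textbf{Step 1: the Jacobian.} Since $K_{ij}=2\pi-\sum_{\sigma}\beta_{ij,\sigma}$, the chain rule gives
\begin{equation*}
\frac{\partial K}{\partial r}=-\sum_{\sigma\in T}J_\sigma D_\sigma^T,
\end{equation*}
where $D_\sigma$ is the radius--length Jacobian of $\sigma$ and $J_\sigma=\partial\beta/\partial l$. Both matrices are extended by zero to all vertices and edges.

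\textbf{Step 2: injectivity of the Jacobian.} Suppose $v\in\mathbb{R}^{|V|}$ satisfies $\frac{\partial K}{\partial r}v=0$. Pair this with the vector of edge-length variations $\delta l=D^Tv$, where $D$ is the global $|V|\times|E|$ matrix with entries $\partial l_{ij}/\partial r_i=\cos\theta_{ij}$. Restricted to each $\sigma$, the global $D$ agrees with $D_\sigma$, so
\begin{equation*}
0=v^TD\,\frac{\partial K}{\partial r}\,v=-\sum_\sigma v^TD_\sigma J_\sigma D_\sigma^Tv=-v^T\Lambda v .
\end{equation*}
Theorem \ref{Lem: matrix_negative} says $\Lambda$ is negative definite under the stated hypotheses, either $\eta(E_\sigma)\in[0,1]^6\setminus\widetilde{S}$ with $A_i\le 0$, or condition (\ref{Eq: SC2}). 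Hence $v=0$, and $\partial K/\partial r$ is injective at every admissible point, in particular at $\overline{r}$. Equivalently, $\Lambda=D\,(\partial K/\partial r)$ (up to sign), so $\ker(\partial K/\partial r)\subseteq\ker\Lambda=0$.

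\textbf{Step 3: local injectivity.} An injective differential at $\overline{r}$ makes $K$ a $C^1$ immersion near $\overline{r}$. By the constant rank theorem, an immersion is injective on some neighborhood $U\subseteq\Omega$ of $\overline{r}$. Here $\Omega$ is open, since non-degeneracy is the open condition $Q<0$. Therefore any $r\in U$ with $K(r)=K(\overline{r})$ must equal $\overline{r}$.

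The only substantive step is Step 2, namely identifying $D\,\frac{\partial K}{\partial r}$ with $-\Lambda$. This requires checking that the per-tetrahedron $D_\sigma$ appearing in $\Lambda_\sigma$ is the restriction of one global $D$. It is, because $\partial l_{ij}/\partial r_i$ depends only on the edge $\{ij\}$ and not on the tetrahedron containing it. The rest is the inverse-function argument.
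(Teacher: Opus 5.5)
Your proof is correct, but it reaches the conclusion by a different route than the paper. The paper stays variational. It introduces $\widetilde{\mathcal F}=\mathcal F-\sum_{\{ij\}\in E}K_{ij}(\overline r)\,l_{ij}$, whose gradient is $\nabla\widetilde{\mathcal F}(r)=D(r)\bigl(K(r)-K(\overline r)\bigr)$. At $\overline r$ the term $\sum_{\{ij\}}\bigl(K_{ij}-K_{ij}(\overline r)\bigr)\,\mathrm{Hess}_r(l_{ij})$ vanishes, so $\mathrm{Hess}_r\widetilde{\mathcal F}(\overline r)=-\Lambda$, which is positive definite. Strict convexity of $\widetilde{\mathcal F}$ on a convex neighborhood then shows that $\nabla\widetilde{\mathcal F}(r)=0=\nabla\widetilde{\mathcal F}(\overline r)$ forces $r=\overline r$.

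You instead work with the curvature map $r\mapsto K$ directly. Your identity $D\,\partial K/\partial r=-\Lambda$ is exactly the derivative of $\nabla\widetilde{\mathcal F}$ at $\overline r$, so the same linear algebra drives both proofs. You use it to show the differential of $K$ is injective, and local injectivity of immersions finishes the argument.

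What each route buys:
\begin{itemize}
\item Your argument needs only that $\Lambda$ be nonsingular, as your kernel-inclusion remark shows. It does not need the Schl\"afli symmetry of $J$ or definiteness, and it avoids the auxiliary functional.
\item The paper's convexity argument genuinely needs positive definiteness. In exchange, it is the template that could extend to a global statement, such as the paper's conjecture, on any convex region where the Hessian stays positive definite.
\item Your immersion argument is intrinsically local.
\end{itemize}
In practice the paper obtains nonsingularity of $\Lambda$ only through negative definiteness of each $\Lambda_\sigma$, so your weaker requirement covers no new cases with the tools available here.
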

\proof
Define the following functional
\begin{equation*}
\widetilde{\mathcal{F}}(r_1,r_2,...,r_{|V|})
=\mathcal{F}-\sum_{j;j\sim i}K_{ij}(\overline{r})l_{ij}.
\end{equation*}
Then by (\ref{Eq: F10}), we have
\begin{equation*}
\frac{\partial \widetilde{\mathcal{F}}}{\partial r_i}
=K_i-\sum_{j;j\sim i}K_{ij}(\overline{r})\frac{\partial l_{ij}}{\partial r_i}
=\sum_{j;j\sim i}(K_{ij}-K_{ij}(\overline{r}))\frac{\partial l_{ij}}{\partial r_i},
\end{equation*}
which implies $\nabla \widetilde{\mathcal{F}}(r)=\nabla \widetilde{\mathcal{F}}(\overline{r})$ by the assumption that $K_{ij}(r)=K_{ij}(\overline{r})$.
Furthermore,
\begin{equation*}
\begin{aligned}
\mathrm{Hess}_r \widetilde{\mathcal{F}}
=-\sum_{\{ijkh\}\in T}\Lambda_\sigma +\sum_{j\sim i}(K_{ij}-K_{ij}(\overline{r}))\frac{\sin^2\Phi_{ij}}{\sinh^3 l_{ij}}
\left(
   \begin{array}{cc}
    \cosh l_{ij}\sinh^2r_j & -\sinh r_i\sinh r_j  \\
     -\sinh r_i\sinh r_j & \cosh l_{ij}\sinh^2r_i  \\
    \end{array}
 \right).
\end{aligned}
\end{equation*}
Theorem \ref{Lem: matrix_negative} implies that $\mathrm{Hess}_r \widetilde{\mathcal{F}}|_{r=\overline{r}}>0$, and hence the function $\widetilde{\mathcal{F}}$ is a strictly convex function on a convex neighborhood $U$ of $\overline{r}$.
Set
\begin{equation*}
f(t)=\widetilde{\mathcal{F}}((1-t)r+t\overline{r}),\ t\in [0,1].
\end{equation*}
Then $f(t)$ is a $C^1$ smooth convex function of $t\in [0,1]$.
The rest of the proof is similar to that of Theorem \ref{Thm: local rigidity2}, and we omit it here.
\qed

\appendix
\section{Proof of Lemma \ref{Lem: 3}}\label{appendix a}

Now let $\eta_{ij}=\eta_{ik}=\eta_{il}=\eta_{jk}=\eta_{jl}=\eta_{kl}=\eta\in [0, 1]$.
Then (\ref{th-b-positive}) reduces to
\begin{equation}\label{Eq: F9}
b_l^{(k)}=
\frac{(1+\eta)[\eta(\kappa_i+\kappa_j)+(1-\eta)\kappa_l]}{\det M_{\{i,j,l\}}}.
\end{equation}
Define a $6\times 4$ matrix $T$ as follows. For row $ij$ with complementary edge $\{k,l\}$, set
\begin{equation}\label{Eq: F11}
T_{ij,k} = \frac{\sqrt{q}\,b_l^{(k)}}{\sqrt{E_{ij}}},\qquad T_{ij,l} = \frac{\sqrt{q}\,b_k^{(l)}}{\sqrt{E_{ij}}},
\end{equation}
with all remaining entries zero. 
Then (\ref{th-dbeta}) can be rewritten as
\begin{equation*}
\delta\beta = -THv.
\end{equation*}
On the other hand, $\delta\beta = J\delta l = JUv$, so
\begin{equation*}
JU = -TH.
\end{equation*}
Then by (\ref{eq:matrices}),
\begin{equation*}
B=-RDJD^TR = -U^T JU = U^T TH.
\end{equation*}
Consequently,
\begin{equation}\label{Eq: F15}
\det B = \det(U^T T)\det H 
= -\frac1q \det(U^T T).
\end{equation}

By the sparse structure of $U$ and $T$, we have $(U^T T)_{ii}=0$. 
For $i\neq j$, let $\{k,l\}=\sigma\setminus\{i,j\}$, and define
\begin{equation}\label{Eq: F16}
W_{ij}=\frac{(\kappa_k+\eta\kappa_i)\big[\eta(\kappa_i+\kappa_k)+(1-\eta)\kappa_l\big]}{E_{ik}} 
+\frac{(\kappa_l+\eta\kappa_i)\big[\eta(\kappa_i+\kappa_l)+(1-\eta)\kappa_k\big]}{E_{il}},\quad W_{ii}=0.
\end{equation}
Combining (\ref{Eq: F9}), (\ref{Eq: F17}) and (\ref{Eq: F11}) gives
\begin{equation*}
(U^T T)_{ij} = \frac{\sqrt{q}(1+\eta)}{(\kappa_i^2-1)\det M_{\widehat{j}}} W_{ij} \qquad (i\neq j).
\end{equation*}
All row and column factors in the expression are strictly positive, hence
\begin{equation}\label{Eq: F23}
\operatorname{sgn}\det(U^T T) = \operatorname{sgn}\det W.
\end{equation}
To clear the denominators in (\ref{Eq: F16}), set
\begin{equation*}
P_i = \prod_{m\neq i} E_{im} > 0.
\end{equation*}
Multiply the $i$-th row of $W$ by $P_i$, and denote the resulting matrix by $\mathcal{G}$. 
Then $\mathcal{G}_{ii}=0$, and for $i\neq j$,
\begin{equation}\label{Eq: F19}
\mathcal{G}_{ij} = E_{ij}\big\{ E_{il}(\kappa_k+\eta\kappa_i)\big[\eta(\kappa_i+\kappa_k)+(1-\eta)\kappa_l\big] +E_{ik}(\kappa_l+\eta\kappa_i)\big[\eta(\kappa_i+\kappa_l)+(1-\eta)\kappa_k\big] \big\}.
\end{equation}
Each edge $E_{ij}$ appears exactly once in $P_iP_j$, 
so
\begin{equation}\label{Eq: F22}
\det \mathcal{G} = \left(\prod_{i<j} E_{ij}^2\right)\det W.
\end{equation}

\begin{lemma}\label{Lem: 2}
Set $y_i=\kappa_i-1>0$. There exists an integer-coefficient polynomial $\mathcal{P}(y_1,y_2,y_3,y_4,\eta)$ of degree $10$ in $\eta$, such that
\begin{equation}\label{Eq: F20}
\det \mathcal{G} =-(1+\eta)^2\left(\prod_{i<j}E_{ij}\right)
\mathcal{P}(y_1,y_2,y_3,y_4,\eta). 
\end{equation}
Moreover, for all $y_i>0$ and $0\le \eta\le 1$,
\begin{equation}\label{Eq: F21}
\mathcal{P}(y_1,y_2,y_3,y_4,\eta)>0.
\end{equation}
\end{lemma}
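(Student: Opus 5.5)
We will prove Lemma \ref{Lem: 2} by explicit symbolic computation, organized so that the factorization (\ref{Eq: F20}) falls out structurally and the positivity (\ref{Eq: F21}) reduces to a coefficient check.

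\textbf{Step 1: substitute.} Put $\kappa_i=1+y_i$ everywhere. Then each
\[
E_{ij}=\kappa_i^2+\kappa_j^2+2\eta\kappa_i\kappa_j+\eta^2-1
\]
becomes a polynomial in $y_i,y_j,\eta$ with integer coefficients. Writing $E_{ij}=(1+\eta)\bigl(2+\ldots\bigr)+\ldots$, one sees that it is positive for $y>0$ and $\eta\in[0,1]$. The entries $\mathcal{G}_{ij}$ in (\ref{Eq: F19}) are then explicit integer polynomials.

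\textbf{Step 2: extract the factors in (\ref{Eq: F20}).}

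\emph{The edge factors.} Row $i$ of $\mathcal{G}$ carries $E_{ij}$ in column $j$. Expanding $\det\mathcal{G}$ as a sum over permutations without fixed points (recall $\mathcal{G}_{ii}=0$), each term contains a product of $E_{i\pi(i)}$. To isolate $\prod_{i<j}E_{ij}$, factor $E_{ij}$ out of column $j$ in each row entry, so that $\mathcal{G}=\widehat{E}\circ\mathcal{H}$ entrywise. Then show that $\det\mathcal{G}$ vanishes on each hypersurface $E_{ij}=0$, regarded as a formal polynomial identity in $(y,\eta)$. Concretely, $\mathcal{G}$ restricted to $E_{ij}=0$ loses its $(i,j)$ and $(j,i)$ entries and acquires a rank defect. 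Each $E_{ij}$ is irreducible over $\mathbb{Q}[y,\eta]$ (quadratic in $y_i$ with non-square discriminant), so divisibility by $\prod E_{ij}$ follows.

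\emph{The factor $(1+\eta)^2$.} Evaluate at $\eta=-1$. There $E_{ij}=(\kappa_i-\kappa_j)^2$ and the brackets simplify, and one checks that $\det\mathcal{G}$ and its $\eta$-derivative both vanish.

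In practice I would carry out Step 2 by computer algebra: compute $\det\mathcal{G}$ and perform exact polynomial division in Mathematica. That the quotient is integral and of degree $10$ in $\eta$ is then a direct output.

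\textbf{Step 3: positivity of $\mathcal{P}$.} This is the main obstacle. The plan is to show that every coefficient of $\mathcal{P}$ is positive once it is written in a basis adapted to the domain $y_i>0$, $0\le\eta\le1$.

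First, I would substitute $\eta=t/(1+t)$ with $t\ge0$, or alternatively expand in the Bernstein basis $\eta^a(1-\eta)^{10-a}$. Multiplying through by $(1+t)^{10}$ turns the problem into checking that a polynomial in $(y_1,\dots,y_4,t)$ has only nonnegative coefficients, with at least one positive monomial that is strictly positive on the domain, for instance the constant term or a pure power of the $y_i$.

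If some coefficients turn out negative, I would group terms using symmetry. $\mathcal{P}$ should be symmetric under $S_4$ acting on the $y_i$, by the symmetry of the equal-weight tetrahedron. I would then write $\mathcal{P}$ in elementary symmetric polynomials, or use AM--GM pairings of monomials to absorb the negative coefficients.

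Separately, I would check the endpoints:
\begin{itemize}
\item at $\eta=0$, compare with the tangential case;
\item at $\eta=1$, where $E_{ij}=(\kappa_i+\kappa_j)^2$ and the expressions simplify greatly.
\end{itemize}
These endpoint computations serve as sanity checks.

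Finally, all computations will be verified in Mathematica, consistent with the AI statement. Combined with (\ref{Eq: F15}), (\ref{Eq: F23}) and (\ref{Eq: F22}), the resulting sign of $\det B$ then feeds into the proof of Lemma \ref{Lem: 3}.
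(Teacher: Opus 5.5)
Your plan is the paper's proof. The paper also expands $\det\mathcal{G}$ over the derangements in $\mathbb{Z}[y_1,\dots,y_4,\eta]$ and divides exactly by $(1+\eta)^2\prod_{i<j}E_{ij}$ with zero remainder. It then substitutes $\eta=z/(1+z)$ and checks by computer that $\mathcal{Q}(y,z)=(1+z)^{10}\mathcal{P}(y,z/(1+z))$ has only positive coefficients: all $16160$ of them, $729$ free of $z$. So your fallback regrouping is never needed.

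Your structural explanation of the factors is correct, although the paper does not use it.
\begin{itemize}
\item Modulo $E_{ij}$, rows $i$ and $j$ of $\mathcal{G}$ vanish in columns $i,j$. In columns $k,l$ both rows are multiples of $\bigl(\eta(\kappa_i+\kappa_j)+(1-\eta)\kappa_l,\ \eta(\kappa_i+\kappa_j)+(1-\eta)\kappa_k\bigr)$, hence proportional, so $E_{ij}$ divides $\det\mathcal{G}$.
\item At $\eta=-1$ one gets $\mathcal{G}_{ij}=2\prod_{m\ne i}(\kappa_m-\kappa_i)\,g_i(\kappa_j)$, where $g_i=g+\kappa_i\tilde g$ for cubics $g,\tilde g$ independent of $i$. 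So $\mathcal{G}$ has rank at most $2$ there, and this is what forces the factor $(1+\eta)^2$.
\end{itemize}

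The point to correct is the endpoint $\eta=1$. The substitution $\eta=t/(1+t)$ with $t\ge0$ only covers $0\le\eta<1$. So nonnegative coefficients of $\mathcal{Q}$ together with a positive $t$-free monomial prove (\ref{Eq: F21}) only on $[0,1)$. The endpoint is a required case, not a sanity check.

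Moreover, $\eta=1$ is the tangential case $\Phi\equiv0$, not $\eta=0$, which corresponds to $\Phi\equiv\frac{\pi}{2}$. That is, $\eta=1$ is the case $C=0$ of (\ref{Eq: SC2}), so it cannot be dropped.

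The paper settles it by the explicit evaluation $\mathcal{P}(y,1)=12\prod_{i<j}(\kappa_i+\kappa_j)^2>0$. Equivalently, in your Bernstein formulation $\mathcal{P}(y,1)$ is exactly the coefficient of $t^{10}$ in $\mathcal{Q}$. So besides the positive $t$-free monomial, you must also check that this top coefficient polynomial is not identically zero.
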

\proof
Since $\mathcal{G}_{ii}=0$, $\det \mathcal{G}$ is a sum over $4$-point fixed-point-free permutations. 
Substitute (\ref{th-E}) and (\ref{Eq: F19}), 
expand exactly in the ring of integer polynomials $\mathbb{Z}[y_1,y_2,y_3,y_4,\eta]$, 
and perform polynomial division to obtain (\ref{Eq: F20}), with zero remainder.

For $0\le \eta<1$, introduce the nonnegative change of variables
\begin{equation}\label{73}
z=\frac{\eta}{1-\eta}\ge 0,\qquad \mathcal{Q}(y,z)=(1+z)^{10}\mathcal{P}\!\left(y,\frac{z}{1+z}\right). 
\end{equation}
With Wolfram Mathematica 12.0,  via exact integer expansion one can obtain all the  nonzero monomials in $\mathcal{Q}(y,z)$.  More precisely, to construct $\mathcal{G}$ in equation (\ref{Eq: F19}), one can run the following Mathematica code:

\bigskip

%$>$ $Clear[``Global`*"];$

$>$ $y = Array[yy, 4];$

$>$ $kap = y + 1;$

$>$ $vertices = Range[4];$

$>$ $edges = Subsets[vertices, \{2\}];$

$>$ $e[i_{-}, j_{-} ] := 
  kap[[i]]^2 + kap[[j]]^2 + 2*eta*kap[[i]] kap[[j]] + eta^2 - 1;$

$>$ $a[i_{-}, k_{-}, l_{-}\ ] := (kap[[k]] + 
     eta*kap[[i]])*(eta(kap[[i]] + kap[[k]]) + (1 - eta) kap[[l]]);$

$>$ $G = Table[
   If[i == j, 0, 
    With[\{rest = Complement[vertices, \{i, j\}]\}, 
     With[\{k = rest[[1]], $ $l = rest[[2]]\}, 
      e[i, j] (e[i, l]*a[i, k, l] + e[i, k]*a[i, l, k])]]], \{i, 
    4\}, \{j, 4\}];$
    
\bigskip

To  calculate the determinant of $\mathcal{G}$, we can run the following:

\bigskip

$>$ $derangements = 
  Select[Permutations[vertices], And @@ Thread[\# \neq vertices] \&]; $
  
$>$ $Length[derangements];$

$>$ $detG = Expand[
   Total[(Signature[\#] Times @@ Table[G[[i, \#[[i]]]], \{i, 4\}]) \& /@ 
     $ 
     
     $derangements]];$

\bigskip

To get  the polynomial $\mathcal{P}(y_1,y_2,y_3,y_4,\eta)$ in equation (\ref{Eq: F20}), we can run the following:

\bigskip

$>$ $divisor = Expand[(1 + eta)^2*Times @@ (e[\#[[1]], \#[[2]]] \& /@ edges)];$

$>$ $\{pPoly, remainder\} = PolynomialQuotientRemainder[-detG, divisor, eta];$

$
\{remainder === 0, Exponent[pPoly, eta]\}$

\bigskip

To obtain the  polynomial $\mathcal{Q}(y,z)$ in equation (\ref{73}), we can run the following

\bigskip

$>$ $degree = Exponent[pPoly, eta];$

$>$ $qPoly = Expand[
   Sum[Coefficient[pPoly, eta, j]*z^j (1 + z)^{(degree - j)}, \{j, 0, 
     degree\}]];$
     
%$>$ $(*check\  qPloy\  is\  exactly\  the\  polynomial\  Q(y,z)\  in\  equation\  %(72)*)$

$>$ $Together[qPoly - (1 + z)^{10} (pPoly /. eta \rightarrow z/(1 + z))] === 0$

\bigskip

To verify the number of the nonzero monomials, the minimal coefficient 
of the nonzero monomials, the maximal coefficient of the nonzero 
monomials, the numbers of monomials with z-degree zero, and check all 
the coefficients are positive, we can run the following:

\bigskip

$>$ $rules = CoefficientRules[qPoly, Join[y, \{z\}]];$

$>$ $coefficients = Last /@ rules;$

$>$ $\{Length[rules], Min[coefficients], Max[coefficients], 
 Count[rules, rule_{-} /;$ 
 
 $ Last[First[rule]] == 0], 
 AllTrue[coefficients, Positive]\}$

\bigskip

To obtain all the coefficients of the nonzero monomials in $\mathcal{Q}(y,z)$, one can run the following:

\bigskip

$>$ $Q1 = qPoly /. \{yy[1]\rightarrow y1, yy[2] \rightarrow y2, yy[3] \rightarrow y3, yy[4] \rightarrow y4\};$

$>$ $Qnew = Expand[Q1];$

$>$ $vars = \{y1, y2, y3, y4, z\};$

$>$ $allRules = CoefficientRules[Qnew, vars];$

$>$ $coefficientTable = (Join[First[\#], \{Last[\#]\}] \&) /@ allRules;$

$>$ $coefficientTable = Reverse@SortBy[coefficientTable, Take[\#, 5] \&];$

$>$ $Export[FileNameJoin[\{NotebookDirectory[], ``Q_{-}coefficients.csv"\}], $

$
  Prepend[coefficientTable, \{``a1", ``a2", ``a3", ``a4", ``b", 
   ``coefficient"\}], ``CSV"]$

\bigskip

After running all the above codes, we find that $\mathcal{Q}$ has exactly $16160$ nonzero monomials, with all coefficients being positive integers; the minimal coefficient is $2$, maximal coefficient is $53861376$. 
There are $729$ terms with $z$-degree $0$, so even at $z=0$, $\mathcal{Q}(y,0)>0$. 
Hence $\mathcal{Q}(y,z)>0$ whenever $y_i>0,z\ge0$, which implies $\mathcal{P}(y,\eta)>0$. (The coefficients of all nonzero monomials of \(\mathcal{Q}(y,z)\) are provided in the attached file $\mbox{Q}_{-}\mbox{coefficients}$.csv.)

It remains to treat the endpoint $\eta=1$. For this purpose, we can run the following

\bigskip

$>$ $pPoly /. \{yy[1] \rightarrow y1, yy[2] \rightarrow y2, yy[3] \rightarrow y3, yy[4] \rightarrow y4, 
   eta \rightarrow 1\} // Simplify$
%Direct substitution in the integer polynomial ring yields the endpoint factorization

\bigskip
The result is 
\begin{equation*}
\mathcal{P}(y,1)=12\prod_{i<j}(y_i+y_j+2)^2=12\prod_{i<j}(\kappa_i+\kappa_j)^2>0. 
\end{equation*}
This proves (\ref{Eq: F21}).
\qed

%To make this step auditable, the canonical term table records each row in descending lexicographic order of exponent vectors $(a_1,a_2,a_3,a_4,b)$ as $c \backslash t a_1 \backslash t a_2 \backslash t a_3 \backslash t a_4 \backslash t b \backslash n$.
%The SHA-256 hash of this ASCII term table is
%\begin{equation*}
%\text{bd6b019f786d6af779990cc07d22c4809f5c6e5fcd9fd68a83c7d3109dca3172}.
%\end{equation*}
%Furthermore, the sum of all coefficients satisfies
%\begin{equation*}
%\mathcal{Q}(1,1,1,1,1)=18\,139\,852\,800.
%\end{equation*}
%These data simultaneously verify the number of terms, coefficient signs, ordering results, and derived content. 
%The entire computation uses only integer addition, multiplication and integer division, 
%so it involves no floating-point approximations or finite-point sampling.

\begin{corollary}\label{Cor: rank=4}
Fix $\eta\in[0,1]$. 
Within the admissible space, $B$ is everywhere non-singular, 
and hence $Rank(\Lambda_\sigma)=4$.
\end{corollary}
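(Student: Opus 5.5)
The plan is to chain the determinant identities already established in the appendix. By (\ref{Eq: F15}), $\det B=-\frac1q\det(U^TT)$ with $q=-\det M>0$ on the admissible space. By (\ref{Eq: F23}), the sign of $\det(U^TT)$ equals the sign of $\det W$. By (\ref{Eq: F22}), $\det\mathcal{G}=\bigl(\prod_{i<j}E_{ij}^2\bigr)\det W$, and each $E_{ij}>0$ by (\ref{th-E}). Lemma \ref{Lem: 2} then gives
\[
\det\mathcal{G}=-(1+\eta)^2\Bigl(\prod_{i<j}E_{ij}\Bigr)\mathcal{P}(y,\eta),
\]
with $\mathcal{P}>0$ for all $y_i=\kappa_i-1>0$ and $\eta\in[0,1]$. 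Every point of the admissible space has $\kappa_i=\coth r_i>1$, so $y_i>0$. Also $(1+\eta)^2\ge1$. Hence $\det\mathcal{G}<0$, so $\det W<0$, so $\det(U^TT)<0$, and therefore $\det B=-\frac1q\det(U^TT)>0$. In particular $B$ is nonsingular at every admissible point.

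Along the way I would check that every factor dropped in these sign comparisons is really positive. The row factors $\sqrt q(1+\eta)/(\kappa_i^2-1)$ and column factors $1/\det M_{\widehat j}$ relating $U^TT$ to $W$ are positive, since $\det M_{\widehat j}>0$ by (\ref{Eq: F26}). The row multipliers $P_i$ used to pass from $W$ to $\mathcal{G}$ are products of the positive $E_{im}$. The identity $JU=-TH$ is justified because (\ref{th-dbeta}) holds for every $v$.

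Finally, I pass from $B$ to $\Lambda_\sigma$. By (\ref{eq:matrices}), $B=-R\Lambda_\sigma R$ with $R=\diag(s_1^2,\dots,s_4^2)$, and $R$ is invertible because $s_i=\sinh r_i>0$. So $\det\Lambda_\sigma=\det B/\prod_i s_i^4\neq0$, and $\mathrm{Rank}(\Lambda_\sigma)=4$.

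The only substantive input is the positivity of $\mathcal{P}$ in Lemma \ref{Lem: 2}, which is taken as given. The main thing to watch is that the equal-weight specialization (\ref{Eq: F9}) of $b_l^{(k)}$ is the one actually used to build $W$. The rest is bookkeeping of signs.
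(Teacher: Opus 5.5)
Your proposal is correct and follows the paper's proof: it chains (\ref{Eq: F22}), Lemma \ref{Lem: 2}, (\ref{Eq: F23}) and (\ref{Eq: F15}) to get $\det W<0$, then $\det(U^TT)<0$, then $\det B>0$. It then transfers nonsingularity to $\Lambda_\sigma$ through $B=-R\Lambda_\sigma R$ with $R$ invertible. Your explicit check that all dropped row and column factors are positive on the admissible space, and that $y_i=\kappa_i-1>0$ there, is bookkeeping the paper leaves implicit.
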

\proof
From (\ref{Eq: F22}) and Lemma \ref{Lem: 2},
\begin{equation*}
\det W = -\frac{(1+\eta)^2 \mathcal{P}(y,\eta)}{\prod_{i<j} E_{ij}} < 0.
\end{equation*}
By (\ref{Eq: F23}), $\det(U^T T)<0$. 
Together with \eqref{Eq: F15}, this yields $\det B>0$.
The conclusion follows from (\ref{eq:matrices}).
\qed

\noindent\textbf{Proof of Lemma \ref{Lem: 3}:}
It suffices to prove that $\Lambda_\sigma$ is negative definite for any $\sigma\in T$ and any $(\eta,r)\in \widetilde{\Omega}_\sigma$, where
\begin{equation*}
\widetilde{\Omega}_\sigma=\bigcup_{\eta\in\widetilde{W}}\big(\eta\times\Omega_{1234}(\eta)\big)=\big\{(\eta,r)\in\widetilde{W}\times\mathbb{R}_{>0}^4\,\big|\,Q(\eta,r)>0\big\}
\end{equation*}
with
\begin{equation*}
\widetilde{W}=\big\{\eta\in[0,1]^6\setminus S\,\big|\,\Phi\equiv C\in[0,\tfrac{\pi}{2}]\big\}.
\end{equation*}
It is straightforward that $\widetilde{W}$ is connected, which implies $\widetilde{\Omega}_\sigma$ is connected by the similar proof of Corollary \ref{Cor: connectness of 10-dim admissible space}. 
It suffices to prove that the four eigenvalues of $\Lambda_\sigma$ are negative for any $\sigma\in T$ and any $(\eta,r)\in\widetilde{\Omega}_\sigma$.

For $\bar{\eta}=(0,0,0,0,0,0)$ and $\bar{r}=(1,1,1,1)$, we have $(\bar{\eta},\bar{r})\in\widetilde{\Omega}_\sigma$. 
In this case, all the intersection angles $\Phi_{ij}$ are $\frac{\pi}{2}$.
By Corollary \ref{Cor: negative under sum big than pi}, 
$\Lambda_\sigma(\bar{\eta},\bar{r})$ is negative definite.
By Corollary \ref{Cor: rank=4}, $Rank(\Lambda_\sigma(\eta,r))=4$ for any $(\eta,r)\in\widetilde{\Omega}_\sigma$. 
Combining the continuity of eigenvalues and the connectedness of $\widetilde{\Omega}_\sigma$, 
we have four eigenvalues of $\Lambda_\sigma$ are negative for any $(\eta,r)\in\widetilde{\Omega}_\sigma$. 
This implies the negative definiteness of the matrix $\Lambda_\sigma$.
\qed


\begin{thebibliography}{50}
\setlength{\itemsep}{-2pt} \small
\bibitem{CL} B. Chow, F. Luo, \emph{Combinatorial Ricci flows on surfaces}, J. Differential Geometry, 63 (2003), 97-129.

\bibitem{CR} D. Cooper, I. Rivin, \emph{Combinatorial scalar curvature and rigidity of ball packings}, Math. Res. Lett. 3 (1996), 51-60.


\bibitem{GH} H. Ge, B. Hua, \emph{$3$-dimensional combinatorial Yamabe flow in hyperbolic background geometry}, Trans. Amer. Math. Soc. 373 (2020), no. 7, 5111–5140.

\bibitem{G thesis} D. Glickenstein, \emph{Precompactness of the Ricci flow and a maximum principle on combinatorial Yamabe flow}. Thesis (Ph.D.)–University of California, San Diego. 2003. 104 pp. ISBN: 978-0496-39041-0.
    
\bibitem{G1} D. Glickenstein, \emph{A combinatorial Yamabe flow in three dimensions}, Topology 44 (2005), No. 4, 791-808.

\bibitem{G2} D. Glickenstein, \emph{A maximum principle for combinatorial Yamabe flow}, Topology 44 (2005), No. 4, 809-825.

\bibitem{Glickenstein JDG} D. Glickenstein, \emph{Discrete conformal variations and scalar curvature on piecewise flat two and three dimensional manifolds}. J. Differ. Geom. 87(2), 201–237 (2011).

\bibitem{Guo} R. Guo, \emph{Local rigidity of inversive distance circle packing}, Trans. Amer. Math. Soc. 363 (2011) 4757-4776.

%\bibitem{Guo2} R. Guo, \emph{Calculus of generalized hyperbolic tetrahedra}. Geometriae Dedicata, 153(1):139149, 2011.

\bibitem{He-Xu} X. He, X. Xu, \emph{Thurston's sphere packing on 3-dimensional manifolds, I}. Calc. Var. Partial Differential Equations 62 (2023), no. 5, Paper No. 152, 23 pp.

\bibitem{H2} Z.-X. He,  \emph{Rigidity of infinite disk patterns}. Ann. of Math. (2) 149 (1999), no. 1, 1-33.
%\bibitem{L1} F. Luo, \emph{Combinatorial Yamabe flow on surfaces}, Commun. Contemp. Math. 6 (2004), no. 5, 765-780.

%\bibitem{Luo} F. Luo, \emph{On a problem of Fenchel}. Geometriae Dedicata, 64:277282, 1997.

%\bibitem{Luo CCM} F. Luo, \emph{3-dimensional Schl\"{a}fli formula and its generalization}. Commun. Contemp. Math. 10 (2008), suppl. 1, 835–842.


\bibitem{L2} F. Luo, \emph{Rigidity of polyhedral surfaces, III}, Geom. Topol. 15 (2011), 2299-2319.


\bibitem{MU} J. Murakami, A. Ushijima, \emph{A volume formula for hyperbolic tetrahedra in terms of edge lengths}, J. Geom. 83 (2005), no. 1-2, 153–163.

\bibitem{St} K. Stephenson,  \emph{Introduction to circle packing: The theory of discrete analytic functions}. Cambridge University Press, Cambridge, 2005. xii+356 pp. ISBN: 978-0-521-82356-2; 0-521-82356-0.

\bibitem{TW}  Y.U. Taylor, C.T. Woodward,  \emph{$6j$  symbols for $U_q(\mathfrak{sl}_2)$ and non-Euclidean tetrahedra}. Selecta Math. (N.S.)  11  (2005),  no. 3-4, 539-571.


%\bibitem{Vinberg} E. B. Vinberg, \emph{Geometry. II}, Encyclopaedia of Mathematical Sciences, 29, SpringerVerlag, New York, 1988.

\bibitem{Thomas} J. Thomas, \emph{Conformal variations of piecewise constant two and three dimension manifolds}. Ph.D. Thesis, The University of Arizona (2015).

\bibitem{T1} W. Thurston, \emph{Geometry and topology of $3$-manifolds}, \href{https://library.slmath.org/books/gt3m}{https://library.slmath.org/books/gt3m} (1980).


\bibitem{X1} X. Xu, \emph{On the global rigidity of sphere packings on 3-dimensional manifolds}, J. Differ. Geom. 115(1), 175–193 (2020).

\bibitem{X2} X. Xu, \emph{Rigidity of inversive distance circle packings revisited}, Adv. Math. 332 (2018), 476-509.

\bibitem{X3} X. Xu, \emph{A new proof of Bowers-Stephenson conjecture}. Math. Res. Lett. 28(4), 1283–1306 (2021).

\bibitem{Z} Z. Zhou, \emph{Circle patterns with obtuse exterior intersection angles}, \href{https://arxiv.org/abs/1703.01768}{arXiv:1703.01768v2 [math.GT].}

\end{thebibliography}
\end{document}